\numberwithin{equation}{section} \theoremstyle{plain}
\newtheorem{thm}{Theorem}[section]
\newtheorem{prop}[thm]{Proposition}
\newtheorem{defi}[thm]{Definition}
\newtheorem{lem}[thm]{Lemma}
\newtheorem{cor}[thm]{Corollary}
\theoremstyle{remark}
\newtheorem{rema}[thm]{Remark}
\newcommand{\N}{\mathbb{N}}
\newcommand{\Z}{\mathbb{Z}}
\newcommand{\R}{\mathbb{R}}
\newcommand{\C}{\mathbb{C}}
\title[Bispectral quantum KZ equations]{Bispectral quantum
Knizhnik-Zamolodchikov equations for arbitrary root systems}
\author{Michel van Meer}
\address{Korteweg-de Vries Institute for Mathematics, University of Amsterdam,
Science Park 904, 1098 XH Amsterdam, The Netherlands.}
\email{m.vanmeer@uva.nl}
\begin{document}

\maketitle

\begin{abstract}
The bispectral quantum Knizhnik-Zamolodchikov (BqKZ) equation
corresponding to the affine Hecke algebra $H$ of type $A_{N-1}$ is
a consistent system of $q$-difference equations which in some
sense contains two families of Cherednik's quantum affine
Knizhnik-Zamolodchikov equations for meromorphic functions with
values in principal series representations of $H$. In
this paper we extend this construction of BqKZ to the case where
$H$ is the affine Hecke algebra associated to an arbitrary
irreducible reduced root system. We construct explicit solutions
of BqKZ and describe its correspondence to a bispectral problem
involving Macdonald's $q$-difference operators.
\end{abstract}


\section{Introduction}

The bispectral quantum Knizhnik-Zamolodchikov (BqKZ) equations
of type $\textup{GL}_N$ were introduced in \cite{MS}. The BqKZ equations
make up a consistent system of $q$-difference equations for
functions depending on two torus variables $t,\gamma\in
T:=(\C\setminus\{0\})^N$, such that for fixed $\gamma\in T$, the
equations in $t$ form Cherednik's \cite{CqKZ} quantum affine
Knizhnik-Zamolodchikov equations associated with the principal
series module $M_\gamma$ of the affine Hecke algebra $H$ of type $\textup{GL}_N$ with central character $\gamma$, while on
the other hand, for fixed $t\in T$, the equations in $\gamma$ form
another system of quantum affine KZ equations associated with
$M_{t^{-1}}$. This second system is expected to relate to Etingof
and Varchenko's system of dynamical $q$-difference equations
(\cite{EV}).

In the present paper, we extend the theory of BqKZ and its solutions
to arbitrary root systems. Apart from the case of $\textup{GL}_N$,
which was treated \cite{MS}, there are three cases to consider in the
Macdonald-Cherednik theory, namely the twisted and untwisted reduced
affine root systems and the nonreduced affine root system of type $C^\vee C$
(see \cite[(1.4.1)-(1.4.3)]{M}). In this paper we consider the twisted case
(\cite[(1.4.2)]{M}), the untwisted case is expected to allow for a similar treatment. The construction of BqKZ for $C^\vee C$ (along the lines of \cite{MS}) appeared in a recent preprint by Takeyama \cite{Ta}, so the picture is now rather complete.

Let us explain the ideas involved in a bit more detail. Choose
$0<q<1$. Let $W=W_0\ltimes P^\vee$ be the (extended)
affine Weyl group, the semidirect product of the finite Weyl group
$W_0$ and the coweight lattice $P^\vee$, corresponding to some
reduced irreducible root system of rank $N$. Consider the complex torus $T:=\textup{Hom}_\Z(P^\vee,\C^\times)$. Transposing the natural action of $W_0$ on $P^\vee$ gives rise to an action of $W_0$ on $T$. For $\lambda\in P^\vee$, let $q^\lambda\in T$ be defined by
\[
q^\lambda(\mu):=q^{\langle\lambda,\mu\rangle},\qquad{\mu\in P^\vee}.
\]
The action of $W_0$ on $T$ extends to an action of $W$ on $T$ by letting $\lambda\in P^\vee$ act via $t\mapsto q^\lambda t$. Let $V$ be a finite-dimensional complex vector space of dimension $\#W_0$. The BqKZ system which we will introduce, is a system of $q$-difference equations of the form
\[
C_{(\lambda,\mu)}(t,\gamma)f(q^{-\lambda} t,q^\mu\gamma)=
f(t,\gamma),\qquad\lambda,\mu\in P^\vee,
\]
for meromorphic functions $f$ on $T\times T$ with values in $V$.
Here $C_{(\lambda,\mu)}$ ($\lambda,\mu\in P^\vee$) are
$\textup{End}(V)$-valued meromorphic functions on $T\times T$,
satisfying the following cocycle property
\[
C_{(\lambda+\nu,\mu+\xi)}(t,\gamma)=C_{(\lambda,\mu)}(t,\gamma)
C_{(\nu,\xi)}(q^{-\lambda}t,q^{\mu}\gamma),\qquad\lambda,\mu,\nu,\xi\in P^\vee,
\]
which implies that BqKZ is a holonomic system of $q$-difference equations.

BqKZ contains, in some sense, two families of Cherednik's quantum
affine KZ equations associated with the principal series
representation of $H$. We recall that the quantum affine KZ
equations associated with a finite dimensional $H$-module $M$ is a
consistent system of $q$-difference equations of the form
\[
F^M_{\lambda}(t)f(q^{-\lambda}t)=f(t),\qquad \lambda\in P^\vee,
\]
for meromorphic functions $f$ on $T$ with values in $M$, and where
$F^M_\lambda$ ($\lambda\in P^\vee$) are $\textup{End}(M)$-valued
meromorphic functions on $T$ (see Subsection \ref{subsecqKZB}). Now the first family of quantum affine KZ
equations inside BqKZ is parametrized by
$\gamma\in T\simeq\{1\}\times T\subset T\times T$.
More precisely, if we fix $\gamma=\zeta\in T$, we have
\begin{equation*}
C_{(\lambda,e)}(t,\zeta)=F_\lambda^{M_\zeta}(t),
\end{equation*}
where $M_\zeta$ is the principal series representation of $H$ with
central character $\zeta$, which as a vector space can be
identified with $V$ via a $\zeta$-dependent isomorphism.
Similarly, interchanging the roles of the torus variables $t$ and
$\gamma$, BqKZ contains a second family of quantum affine KZ
equations, parametrized by $t\in T$ (related to the affine Hecke
algebra module $M_{t^{-1}}$).

Let us give a short overview of the paper. After the construction
of BqKZ we introduce the principal series representation, needed
to express the (asymptotic) values of the connection matrices
$C_{(\lambda,\mu)}(t,\gamma)$. These in turn are used to construct
an asymptotically free self-dual meromorphic solution $\Phi$ of
BqKZ. The set of solutions $\textup{SOL}$ of BqKZ allows an action
of $W_0$, and the orbit $W_0\Phi$ constitutes a basis of
$\textup{SOL}$ viewed as a vector space over the field of
$q$-dilation invariant meromorphic functions on $T\times T$.

For  $\textup{GL}_N$, a correspondence \cite[Thm. 6.16]{MS}
between solutions of BqKZ and solutions of a bispectral problem
involving Ruijsenaars' commuting trigonometric $q$-difference
operators (also known as Macdonald-Ruijsenaars operators) was
derived as a bispectral incarnation of Cherednik's \cite[Thm.
4.4]{Cref} embedding of the solutions of the quantum affine KZ
equations (for $\textup{GL}_N$) into the solutions of the
Ruijsenaars eigenvalue problem. The latter has been generalized to an embedding of the solution space of the quantum affine KZ equations for an arbitrary root system into the solution space of a system of $q$-difference equations involving Macdonald's $q$-difference operator (see \cite[Thm.4.6]{Ka} and \cite{CInd}). We give the analog of the bispectral correspondence \cite[Thm. 6.16]{MS} in the setting of arbitrary root systems.

As for $\textup{GL}_N$, we may apply the correspondence to $\Phi$
to obtain a self-dual Harish-Chandra series solution of the
bispectral problem. It is a bispectral analogue of (difference)
Harish-Chandra series solutions of the spectral problem for
Macdonald's $q$-difference operators, which were studied in
\cite{EK1} and \cite{KK} for root systems of type $A$ and in
\cite{LS} for arbitrary root systems. We will obtain new results
on the convergence and singularities of the Harish-Chandra series
from the corresponding results for $\Phi$.

Though the general constructions are more or less the same as for
$\textup{GL}_N$, various technical results require a different
approach. This becomes apparent in Section \ref{SectionFormalB}
when computing the cocycle values, in Section \ref{SectionSolB}
determining the asymptotic behavior of the $q$-connection matrices
and their singularities, and in Section \ref{SectionCorrespB}
finding the leading term of $\Phi$. An important difference with
the case of $\textup{GL}_N$, complicating some of the proofs, is
the fact that the affine Weyl group of type $\textup{GL}_N$ (and
the corresponding affine Hecke algebra) allows a rather convenient
presentation in terms of the finite Weyl group (respectively finite Hecke algebra) and an affine
Dynkin diagram automorphism (see \cite[Lemma 1.3.4]{C} or
\cite[\S2.1]{MS}), which is lacking for affine Weyl groups (respectively affine Hecke algebras) of arbitrary type. In this paper we give all the main constructions and provide those proofs that are substantially different from the proofs for $\textup{GL}_N$.

\subsection*{Conventions}\hspace{1cm}\\
{\bf --} $\otimes$ always stands for tensor product over $\C$ and
$\textup{End}(M)$, for a module $M$ over $\C$,
stands for $\C$-linear endomorphisms.\\
{\bf --} $\mathbb{N}=\{1,2,\ldots\}$.\\
{\bf --} For a module $M$ over a commutative ring $R$ and a ring
extension $R\subset S$, we write $M^S=S\otimes_RM$.\\
{\bf --} For $a,r\in\R$ with $a>0$, we choose $a^r$ to be the
positive real branch of the power function.

\subsection*{Acknowledgments} The author is supported by the
Netherlands Organization for Scientific Research (NWO) in the
VIDI-project ``Symmetry and modularity in exactly solvable
models''. He likes to thank Jasper Stokman for his advice and many
valuable discussions.

\section{Notations}

\subsection{Root data}
Let $(V,\langle\:,\:\rangle)$ be a real Euclidean space of
dimension $N>0$. Let $\widehat{V}$ be the space of affine linear
real functions on $V$. Consider the 1-dimensional vector space $\R
c$. There is a natural isomorphism of real vector spaces
$V\oplus\mathbb{R} c\simeq\widehat{V}$ via
$v+rc\mapsto(u\mapsto\langle v,u\rangle+r)$ for $u,v\in V$ and
$r\in\mathbb{R}$. We will use this isomorphism to identify
$\widehat{V}$ and $V\oplus\mathbb{R}c$, thus regarding
$c\in\widehat{V}$ as the constant function equal to 1.

The map $D\colon\widehat{V}\to V$ defined by $D(v+rc)=v$
($v\in\mathbb{R}$, $r\in\mathbb{R}$) is called the gradient map.
We extend the inner product $\langle\:,\:\rangle$ to a positive
semi-definite bilinear form on $\widehat{V}$ by
\[\langle f,g\rangle:=\langle Df,Dg\rangle,\]
for $f,g\in\widehat{V}$. For $f\in\widehat{V}$ with $Df\neq0$, we
set $f^\vee:=2f/\langle f,f\rangle\in\widehat{V}$.

Let $R\subset V$ be a reduced irreducible finite root system in
$V$ and assume that the scalar product is normalized such that
long roots have squared length 2. The Weyl group $W_0\subset O(V)$
associated to $R$ is the group generated by the orthogonal
reflections $s_\alpha$ in the hyperplanes $\alpha^{\bot}$
($\alpha\in R$). Explicitly, we have
\[
s_\alpha(v)=v-\langle v,\alpha\rangle\alpha^\vee,
\]
for $\alpha\in R$, $v\in V$. Fix a basis of simple roots
$\{\alpha_1,\ldots,\alpha_N\}$ of $R$. Write $R_+$ for the set of
positive roots, $R_-:=-R_+$ for the set of negative roots, and
$\phi$ for the highest root with respect to this basis. Note that
$\phi\in R_+$ is a long root (and so $\phi^\vee=\phi$).

We use the standard notations for the (co)root and (co)weight
lattices, that is,
\[
\begin{split}
    Q&:=\mathbb{Z}\textnormal{-span of } R,\\
    Q^\vee&:=\mathbb{Z}\textnormal{-span of } R^\vee,\\
    P&:=\{\lambda\in V\mid
    \langle\lambda,\alpha^\vee\rangle\in\mathbb{Z},\:\forall\alpha\in R\},\\
    P^\vee&:=\{\mu\in V\mid
    \langle\mu,\alpha\rangle\in\mathbb{Z},\:\forall\alpha\in R\}.
\end{split}
\]
Note that $Q\subseteq P$ and $Q^\vee\subseteq P^\vee$.
Furthermore, since $\|\alpha\|^2=2$ for $\alpha\in R$ a long root
and thus $\|\alpha\|^2\in\{1,2/3\}$ for $\alpha\in R$ short, we
have
$\alpha^\vee=\frac{2}{\|\alpha\|}\alpha\in\{\alpha,2\alpha,3\alpha\}\subset
Q$ for any $\alpha\in R$. Hence $Q^\vee\subseteq Q$ and therefore
also $P^\vee\subseteq P$.

Let $L\subset V$ be any $W_0$-invariant lattice. The canonical
action of $W_0$ on $V$ extends to a faithful action of the
semi-direct product group $W_L:=W_0\ltimes L$ on $V$ such that
elements of $L$ act as translations. If we want to stress that we
view $\lambda\in L$ as an element of $W_L$, we write
$\textup{t}(\lambda)$. In this notation, $L\subset W_L$ acts on
$V$ by
$$\textup{t}(\lambda)v=v+\lambda,$$
for $\lambda\in L$ and $v\in V$. Transposing the action of $W_L$
on $V$ gives an action of $W_L$ on $\widehat{V}$. It is given by
\[
\begin{split}
w(v+rc)&=w(v)+rc,\qquad\quad w\in W_0,\\
\textup{t}(\lambda)(v+rc)&=v+(r-\langle v,\lambda\rangle)c,\qquad
\lambda\in L,
\end{split}
\]
for $v\in V$, $r\in\R$. Note that $\langle
w(f),w(g)\rangle=\langle f,g\rangle$ for all $f,g\in\widehat{V}$
and $w\in W_L$. In the case that $L=Q^\vee$,
$W_L=W_{Q^\vee}=W_0\ltimes Q^\vee$ is the affine Weyl group. The
extended affine Weyl group is $W_{P^\vee}=W_0\ltimes P^\vee$ and
we will simply denote it by $W$.

Associated to the reduced irreducible finite root system $R$ there
is a reduced irreducible affine root system
$S=S(R):=\{\alpha+rc\mid\alpha\in R, r\in\Z\}$ in $\widehat{V}$.
For $a\in S$, let $s_a\colon V\to V$ be the reflection in the
hyperplane $a^{-1}(\{0\})$, given by
\[
s_a(v)=v-a(v)Da^\vee,
\]
for $v\in V$. Then $s_a=s_{Da}\textup{t}(a(0)Da^\vee)\in
W_{Q^\vee}$. Note that $S\subset\widehat{V}$ is $W$-invariant. We
define an ordered basis $(a_0,\ldots,a_N)$ of $S$ by setting
\[
(a_0,a_1,\ldots,a_N):=(-\phi+c,\alpha_1,\ldots,\alpha_N).
\]
Write $S_+$ and $S_{-}$ for the associated sets of positive and
negative affine roots respectively. Note that
$$S_+:=\{\alpha+rc\mid\alpha\in R, r\geq\chi(\alpha)\},$$ where
$\chi$ is the characteristic function of $R_-$, i.e.,
$\chi(\alpha)=1$ if $\alpha\in R_-$, and $\chi(\alpha)=0$ if
$\alpha\in R_+$.

We put $s_i:=s_{a_i}\in W_{Q^\vee}\subseteq W$ for $i=0,\dots,N$.
The affine Weyl group $W_{Q^\vee}$ is a Coxeter group with Coxeter
generators the simple reflections $s_i$. For $w\in W$ write
$S(w):=S_+\cap w^{-1}S_-$. The length function $\ell$ on $W$ is
defined by
$$\ell(w):=\#S(w),\qquad w\in W.$$
The unique element with maximal length in $W_0$ is denoted by
$w_0$.

The finite abelian subgroup $\Omega:=\{w\in W\mid\ell(w)=0\}$ of
$W$ is isomorphic to $P^\vee/Q^\vee$ and we have
$$W\simeq W_{Q^\vee}\rtimes\Omega.$$
The action of $\Omega$ on $\widehat{V}$ restricts to a faithful
action on the set $\{a_0,\ldots,a_N\}$ of simple roots of $S$, so
we can view $\Omega$ as a group of permutations on the set of
indices $\{0,\ldots,N\}$. We write $\mathbb{C}[\Omega]$ for the
group algebra of $\Omega$.

The Bruhat order $\leq$ on $W_{Q^\vee}$ extends to a partial order
on $W$, referred to as the Bruhat order on $W$ (cf.
\cite[\S2.3]{M}). It is defined as follows. For $w=\omega u$ and
$w'=\omega' u'$ with $\omega,\omega'\in\Omega$ and $u,u'\in
W_{Q^\vee}$ we have by definition
\begin{equation}\label{BruhatOnWB}
w\leq w'\:\Longleftrightarrow\: \omega=\omega'\mbox{ and }u\leq
u'.
\end{equation}

\subsection{Algebra of $q$-difference reflection
operators}\label{subsecAlgofqDiffB} 

Consider the complex torus
$T:=\textup{Hom}_\Z(P^\vee,\C^\times)$. By transposition, the natural action of $W_0$ on $P^\vee$ gives rise to an action of $W_0$ on $T$. Fix $0<q<1$. For $\lambda\in P^\vee$, let $q^\lambda\in T$ be defined by
\[
q^\lambda(\mu):=q^{\langle\lambda,\mu\rangle},\qquad{\mu\in P^\vee}.
\]
The action of $W_0$ on $T$ extends to an action
of $W=W_0\ltimes P^\vee$ on $T$ by letting $\lambda\in P^\vee$ act via $t\mapsto q^\lambda t$. Let the evaluation of $t\in T$ in a point $\lambda\in P^\vee$ be denoted by $t^\lambda\in\C^\times$. Then, summarizing, we have an action of $W$ on $T$ given by
\[
\begin{split}
    (wt)^\mu&=t^{w^{-1}\mu},\\
    (\textup{t}(\lambda) t)^\mu&=q^{\langle\lambda,\mu\rangle}
    t^\mu,
\end{split}
\]
for $t\in T$, $w\in W_0$ and $\lambda,\mu\in P^\vee$. 

Let $\{\varpi_i^{\vee}\}_{i=1}^N$ be the set of fundamental coweights in
$P^\vee$ with respect to $\{\alpha_j\}_{=1}^N$, so $\langle\varpi_i^\vee,\alpha_j\rangle=\delta_{ij}$ for $1\leq i,j\leq N$.
We identify $T\simeq(\C\setminus\{0\})^N$ via
$t\leftrightarrow (t_1,\ldots,t_N)$ defined by
\[
t_i:=t^{\varpi^\vee_i}
\]
for $i=1,\ldots,N$. Under this identification, the action of $P^\vee$ on $T$ reads
\begin{equation}\label{lambdaActionB}
\textup{t}(\lambda)t=q^\lambda t=(q^{\langle\lambda,\varpi^\vee_1\rangle}t_1,\ldots,
q^{\langle\lambda,\varpi^\vee_N\rangle}t_N)
\end{equation}
for $\lambda\in P^\vee$ and $t=(t_1,\ldots,t_N)\in T$.

The algebra of complex-valued regular functions on $T$ is
$\C[x_1^{\pm1},\ldots,x_N^{\pm1}]=\textup{span}_\C\{x^\lambda\}_{\lambda\in
P^\vee}$, where $x_i$ is the coordinate function $x_i(t):=t^{\varpi^\vee_i}$ ($i=1,\ldots,N$) and $x^\lambda(t):=t^\lambda$ for $\lambda\in P^\vee$. Clearly, it is isomorphic to the group algebra $\C[P^\vee]$ of $P^\vee$. We write $\C[T]=\C[x_1^{\pm1},\ldots,x_N^{\pm1}]$ and we let
$\C(T)$ denote the field of rational functions on $T$,
$\mathcal{O}(T)$ the ring of analytic functions on $T$, and
$\mathcal{M}(T)$ the field of meromorphic functions on $T$. The
$W$-action on $T$ gives rise to a $W$-action by algebra
automorphisms on each of these function algebras, via
\[
(wf)(t)=f(w^{-1}t),
\]
for $w\in W$, $t\in T$ and $f$ a (regular, rational or
meromorphic) function on $T$. Note that for $\lambda\in P^\vee$
and $r\in\mathbb{R}$, we have
\[
w(x^{\lambda+rc})=x^{w(\lambda+rc)},
\]
where $x^{\lambda+rc}:=q^rx^\lambda\in\C[T]$.

By means of this $W$-action by field automorphisms on $\C(T)$, we
can form the smash product algebra $\mathbb{C}(T)\#_qW$, which we
call the algebra of $q$-difference reflection operators with
coefficients in $\C(T)$, since it acts canonically on $\C(T)$ and
$\mathcal{M}(T)$ as $q$-difference reflection operators. For
$f\in\C(T)$ we will write $f(X)\in\C(T)\#_q W$ for the operator on
$\mathcal{M}(T)$ (or $\C(T)$) defined as multiplication by $f$. We
will also write $X^{\lambda+rc}=q^rX^\lambda$ for $\lambda\in
P^\vee$ and $r\in \R$.
\begin{rema}\label{qDependenceB}
Note that since
$(\textup{t}(\lambda)f)(t)=f(q^{-\langle\varpi_1^\vee,\lambda\rangle}
t_1,\ldots,q^{-\langle\varpi_N^\vee,\lambda\rangle}t_N)$
($\lambda\in P^\vee$, $f\in\mathcal{M}(T)$), $\mathbb{C}(T)\#_qW$
actually depends on a choice for $q^{\frac{1}{m}}$, where $m\in\N$
is determined by $m\langle P^\vee,P^\vee\rangle=\Z$. Our global
convention concerning real powers of positive real numbers
justifies the apparent abuse of notation writing $q$ instead of
$q^{1/m}$.
\end{rema}

\subsection{The extended affine Hecke algebra and Cherednik's basic
representation}
Let $k_i$ ($i=0,\ldots,N$) be nonzero complex numbers such that
$k_i=k_j$ if $s_i$ and $s_j$ are conjugate in $W$. Write
$\underline{k}$ for the corresponding multiplicity label
$\underline{k}\colon S\to\C\setminus\{0\}$, so
$\underline{k}(a)=k_i$ for all $a\in W(a_i)$ ($i=0,\ldots,N$). We
set $k_a:=\underline{k}(a)$ for $a\in S$. Furthermore, for $w\in
W$ we define
\[
k(w):=\prod_{a\in S(w)}k_a.
\]
A coweight $\lambda\in P^\vee$ is called dominant if $\langle
\lambda,\alpha_i\rangle\geq0$ for $i=1,\ldots,N$. Let $P^\vee_{+}$
denote the set of dominant coweights.
\begin{lem}
For $\lambda\in P_+^\vee$, we have
\begin{equation}\label{klambdaB}
k(\textup{t}(\lambda))=\prod_{\alpha\in
R_+}k_\alpha^{\langle\lambda,\alpha\rangle}=\delta_{\underline{k}}^\lambda,
\end{equation}
where $\delta_{\underline{k}}\in T$ is defined by
$(\delta_{\underline{k}})_i=\prod_{\alpha\in
R_+}k_\alpha^{\langle\varpi^\vee_i,\alpha\rangle}$
($i=1,\ldots,N$).
\end{lem}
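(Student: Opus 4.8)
The plan is to unwind the definition $k(w)=\prod_{a\in S(w)}k_a$ at $w=\textup{t}(\lambda)$, which reduces the statement to three things: an explicit description of the inversion set $S(\textup{t}(\lambda))$ for $\lambda\in P_+^\vee$, the identity $k_{\beta+rc}=k_\beta$ for $\beta\in R$ and $r\in\Z$, and a direct computation identifying $\prod_{\alpha\in R_+}k_\alpha^{\langle\lambda,\alpha\rangle}$ with $\delta_{\underline{k}}^\lambda$.

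First I would compute $S(\textup{t}(\lambda))=S_+\cap\textup{t}(\lambda)^{-1}S_-$. Using $\textup{t}(\lambda)^{-1}=\textup{t}(-\lambda)$, the formula $\textup{t}(\lambda)(\beta+rc)=\beta+(r-\langle\beta,\lambda\rangle)c$, and the description $S_+=\{\beta+rc\mid\beta\in R,\ r\geq\chi(\beta)\}$, an affine root $a=\beta+rc\in S_+$ lies in $S(\textup{t}(\lambda))$ exactly when $\textup{t}(\lambda)a\in S_-$, i.e. $r-\langle\beta,\lambda\rangle\leq\chi(\beta)-1$; together with $r\geq\chi(\beta)$ this forces $\langle\beta,\lambda\rangle\geq 1$. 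For $\lambda\in P_+^\vee$ this rules out $\beta\in R_-$, so $\beta\in R_+$, $\chi(\beta)=0$, and the conditions collapse to $0\leq r\leq\langle\lambda,\beta\rangle-1$. Hence
\[
S(\textup{t}(\lambda))=\{\beta+rc\mid\beta\in R_+,\ 0\leq r\leq\langle\lambda,\beta\rangle-1\},
\]
which also recovers $\ell(\textup{t}(\lambda))=\sum_{\beta\in R_+}\langle\lambda,\beta\rangle$ as a sanity check. Next, the multiplicity label is $W$-invariant, $k_{wa}=k_a$, directly from its definition. Since $s_\beta\in W_0\subset W$ sends $\beta+rc$ to $-\beta+rc$, and writing $\beta=u\alpha_i$ with $u\in W_0$ and $\alpha_i$ simple we have $\beta+rc=u\bigl(\textup{t}(r\varpi_i^\vee)^{-1}\alpha_i\bigr)$ because $\langle\varpi_i^\vee,\alpha_i\rangle=1$, the affine root $\beta+rc$ lies in $W\beta$ for every $r\in\Z$; thus $k_{\beta+rc}=k_\beta$. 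Combining,
\[
k(\textup{t}(\lambda))=\prod_{\beta\in R_+}\prod_{r=0}^{\langle\lambda,\beta\rangle-1}k_{\beta+rc}=\prod_{\beta\in R_+}k_\beta^{\langle\lambda,\beta\rangle},
\]
with the convention that the product over an empty range is $1$ (for those $\beta$ with $\langle\lambda,\beta\rangle=0$). This is the first claimed identity.

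For the identification with $\delta_{\underline{k}}^\lambda$, I would expand $\lambda$ in fundamental coweights, $\lambda=\sum_{i=1}^N\langle\lambda,\alpha_i\rangle\varpi_i^\vee$, so that $\delta_{\underline{k}}^\lambda=\prod_i(\delta_{\underline{k}})_i^{\langle\lambda,\alpha_i\rangle}=\prod_{\alpha\in R_+}k_\alpha^{\sum_i\langle\varpi_i^\vee,\alpha\rangle\langle\lambda,\alpha_i\rangle}$. Writing $\alpha=\sum_i m_i\alpha_i$ gives $\langle\varpi_i^\vee,\alpha\rangle=m_i$, whence $\sum_i\langle\varpi_i^\vee,\alpha\rangle\langle\lambda,\alpha_i\rangle=\langle\lambda,\alpha\rangle$, and the two expressions for $\delta_{\underline{k}}^\lambda$ and $k(\textup{t}(\lambda))$ coincide.

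I expect the main obstacle to be the first step: determining $S(\textup{t}(\lambda))$ requires keeping careful track of the various conventions in play (the transposed $W$-action on $\widehat{V}$, the direction of $\textup{t}(\lambda)^{-1}$, and the parametrization of $S_\pm$ through $\chi$), and one must be attentive to sign errors. The conjugacy identity $k_{\beta+rc}=k_\beta$ is standard but, unlike in type $\textup{GL}_N$, cannot be read off from a convenient diagram-automorphism presentation of $W$, so it should be justified explicitly as above; the remaining manipulations are routine bookkeeping with the root/coweight data.
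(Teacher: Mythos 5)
Your proof is correct and follows the same route as the paper: identify $S(\textup{t}(\lambda))$ for dominant $\lambda$, use $W$-invariance of $\underline{k}$ to collapse $k_{\beta+rc}$ to $k_\beta$, and finish by matching exponents. The only differences are expository — you derive the inversion set from the conventions where the paper cites Macdonald, and for the conjugacy $k_{\beta+rc}=k_\beta$ you move to a simple root via $W_0$ and translate by a multiple of a fundamental coweight, while the paper directly produces $\nu\in P^\vee$ with $\langle\nu,\beta\rangle=1$ and translates by $r\nu$; these are the same idea.
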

\begin{proof}
For $\lambda\in P_+^\vee$ we have
\[S(\textup{t}(\lambda))=\{\alpha+rc\mid \alpha\in R_+,\:0\leq
r<\langle\lambda,\alpha\rangle\},
\]
cf. \cite[\S2.4]{M}. Note that $k_{\alpha+rc}=k_\alpha$ for
$\alpha\in R$ and $r\in\Z$ since $\alpha+rc$ and $\alpha$ are
conjugate under the action of $W$. Indeed, for $\mu\in P^\vee$ we
have
$\textup{t}(\mu)(\alpha+rc)=\alpha+(r-\langle\mu,\alpha\rangle)c$
and for any $\alpha\in R$ there exists some $\nu\in P^\vee$ such
that $\langle\nu,\alpha\rangle=1$, so that we can take $\mu=r\nu$.
Therefore
\[
k(\textup{t}(\lambda))=\prod_{\stackrel{\stackrel{\alpha\in
R_+}{0\leq
r<\langle\lambda,\alpha\rangle}}{}}k_{\alpha+rc}=\prod_{\alpha\in
R_+}k_\alpha^{\langle\lambda,\alpha\rangle}.
\]
The second equality in \eqref{klambdaB} follows from the
definitions.
\end{proof}

\begin{defi}\label{defAHAB}
    The affine Hecke algebra $H_{Q^\vee}$ associated to the Coxeter system
    $(W_{Q^\vee},\{s_0,\ldots,s_N\})$ and the multiplicity label
    $\underline{k}$, is the unital complex associative algebra
    generated by elements $T_0,\ldots,T_N$, such that
    \begin{itemize}
        \item[\bf{(i)}] $T_0,\ldots,T_N$ satisfy the braid
        relations, i.e. if for $i\neq j$, we have
        $$s_is_js_i\cdots=s_js_is_j\cdots,$$ with $m_{ij}$ factors on each
        side, then
        $$T_iT_jT_i\cdots=T_jT_iT_j\cdots,$$
        with $m_{ij}$ factors on each side;
        \item[\bf{(ii)}] $(T_j-k_j)(T_j+k_j^{-1})=0,$  \:for
        $j=0,\ldots,N$.
    \end{itemize}
\end{defi}

Note that since $\underline{k}$ is $W$-invariant, the group
$\Omega$ acts on $H_{Q^\vee}$ by algebra automorphisms via
$T_i\mapsto T_{\omega(i)}$ for $i=0,\ldots,N$.

\begin{defi}
    The extended affine Hecke algebra $H=H(\underline{k})$ is
    the smash product $H:=H_{Q^\vee}\#\Omega$.
\end{defi}
For $w\in W$ and a reduced expression $w=\omega s_{i_1}\cdots
s_{i_{\ell(w)}}$ with $\omega\in\Omega$ and $i_k\in
\{0,\ldots,N\}$, we define
$$T_w:=\omega T_{i_1}\cdots
T_{i_{\ell(w)}}\in H,$$ which is independent of the reduced
expression chosen. The set $\{T_w\mid w\in W\}$ is a linear basis
of $H$. Note that for $\underline{k}\equiv1$ the extended affine
Hecke algebra is just the group algebra $\C[W]$ of $W$. The finite
Hecke algebra is the subalgebra $H_0=H_0(\underline{k})$ of $H$,
generated by $T_1,\ldots,T_N$.

For $\lambda\in P^\vee_{+}$, put
\[
Y^{\lambda}:=T_{\textup{t}(\lambda)}\in H,
\]
and for arbitrary $\lambda\in P^\vee$ put
\[
Y^{\lambda}:=Y^{\mu}(Y^{\nu})^{-1},
\]
if $\lambda=\mu-\nu$ with $\mu,\nu\in P^\vee_+$. Then the
$Y^{\lambda}$ ($\lambda\in P^\vee$) are well-defined and we have
$Y^0=1$ and
$Y^{\lambda}Y^{\mu}=Y^{\lambda+\mu}=Y^{\mu}Y^{\lambda}$ for all
$\lambda,\mu\in P^\vee$. Set $Y_i:=Y^{\varpi^\vee_i}$ for
$i=1,\ldots,N$.

For $\kappa\in\C\setminus\{0\}$ we define the functions
$b(z,\kappa)$ and $c(z,\kappa)$ by
\[\begin{split}
    b(z;\kappa)&:=\frac{\kappa-\kappa^{-1}}
    {1-z},\\
    c(z;\kappa)&:=
    \frac{\kappa^{-1}-\kappa z}
    {1-z},
  \end{split}
\]
as rational functions in $z$. Then for $a\in S$, we define
$b_{a;\underline{k},q}=b_a\in\C(T)$ and
$c_{a;\underline{k},q}=c_a\in\C(T)$ by
\[\begin{split}
    b_a(t)&:=b(t^{a^\vee};k_a)\\
    c_a(t)&:=c(t^{a^\vee};k_a).
  \end{split}
\]
\begin{rema}
The $q$-dependence of $b_{a,\underline{k},q}$ and
$c_{a;\underline{k},q}$ comes from the convention
$t^{\alpha+rc}=q^rt^\alpha$ for $\alpha\in R$ and $r\in\R$.
Note that
\begin{equation}\label{cParamRelB}
c_{a;\underline{k},q}(t^{-1})=c_{a;\underline{k}^{-1},q^{-1}}(t)
\end{equation}
for all $a\in S$ and $t\in T$. We leave out the subscripts
$\underline{k}$ and $q$ as long as there is no chance of confusion
(which is until Section \ref{SectionCorrespB}).
\end{rema}

Note that $b_a(t)=k_a-c_a(t)$ and $(wc_a)(t)=c_{w(a)}(t)$ for all
$w\in W$. It is convenient to introduce the notations
$b_j:=b_{a_j}$ and $c_j:=c_{a_j}$ for $j=0,\ldots,N$. The
following characterization of $H$ is due to Bernstein and
Zelevinsky (see, e.g., \cite[\S4.2]{M}).
\begin{thm}\label{HcharB}
The affine Hecke algebra $H=H(\underline{k})$ is the unique complex
associative algebra, such that\\
{\bf(i)} $H_0\otimes\C[T]\simeq H$ as complex vector spaces, via
$h\otimes f\mapsto hf(Y)$ for $h\in H_0$, $f\in\C[T]$, where
$f(Y)=\sum_\lambda a_\lambda Y^{\lambda}$ if $f=\sum_\lambda
a_\lambda x^\lambda\in\C[T]$;\\
{\bf(ii)} the canonical maps $H_0,\C[T]\hookrightarrow H$ are
algebra embeddings; we write
$\C_Y[T]=\textup{span}_\C\{Y^\lambda\}_{\lambda\in P^\vee}$ for
the
image of $\C[T]$ in $H$;\\
{\bf(iii)} Lusztig's relations are satisfied, that is,
\begin{equation}\label{eqLusztigB}
      f(Y)T_j=T_j(s_jf)(Y)+b_j(Y^{-1})\left(f(Y)-(s_jf)(Y)\right)
\end{equation}
for $j=1,\ldots,N$ and $f\in\C[T]$.
\end{thm}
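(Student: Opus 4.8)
The plan is to prove two things: that the algebra $H=H_{Q^\vee}\#\Omega$ of Definition~\ref{defAHAB} \emph{satisfies} (i)--(iii), and that these three properties \emph{determine} an associative algebra uniquely (up to the evident isomorphism). I take the basis $\{T_w\}_{w\in W}$ of $H$ as known. For the existence half I would begin with the commutative subalgebra $\C_Y[T]$. Using length-additivity of translations by dominant coweights --- for $\lambda,\mu\in P^\vee_{+}$ one has $\ell(\textup{t}(\lambda)\textup{t}(\mu))=\ell(\textup{t}(\lambda))+\ell(\textup{t}(\mu))$ (cf.\ \cite[\S2.4]{M}), hence $T_{\textup{t}(\lambda)}T_{\textup{t}(\mu)}=T_{\textup{t}(\lambda+\mu)}$ --- one checks that $Y^\lambda$ is well defined for all $\lambda\in P^\vee$, that $Y^\lambda Y^\mu=Y^{\lambda+\mu}$, and therefore that $x^\lambda\mapsto Y^\lambda$ extends to an algebra homomorphism $\C[T]=\C[P^\vee]\to H$. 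Injectivity of this map (equivalently, linear independence of $\{Y^\lambda\}_{\lambda\in P^\vee}$) follows from a triangularity argument against the basis $\{T_w\}_{w\in W}$, the element $Y^\lambda$ having $T_{\textup{t}(\lambda)}$ as its leading term for a suitable order on $W$; this gives the embedding $\C[T]\hookrightarrow H$ in (ii), while $H_0\hookrightarrow H$ is immediate since the $T_w$, $w\in W_0$, are part of the basis.

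I would treat (iii) next, since the spanning statement in (i) rests on it. First observe the right-hand side of \eqref{eqLusztigB} lies in $\C_Y[T]\subseteq H$: because $f-s_jf$ is divisible by $1-x^{-\alpha_j^\vee}$ in the Laurent polynomial ring $\C[P^\vee]$, we get $b_j(Y^{-1})\bigl(f(Y)-(s_jf)(Y)\bigr)=(k_j-k_j^{-1})\,\dfrac{f(Y)-(s_jf)(Y)}{1-Y^{-\alpha_j^\vee}}\in\C_Y[T]$. The set of $f\in\C[T]$ for which \eqref{eqLusztigB} holds is visibly a linear subspace, and it is closed under multiplication: if \eqref{eqLusztigB} holds for $f$ and for $g$, one expands $(fg)(Y)T_j=f(Y)g(Y)T_j$ in two steps and observes that the two $b_j(Y^{-1})$-contributions combine into $(k_j-k_j^{-1})\bigl(fg-s_j(fg)\bigr)(Y)/(1-Y^{-\alpha_j^\vee})$. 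It therefore suffices to verify \eqref{eqLusztigB} on the algebra generators $x^{\pm\varpi_i^\vee}$ of $\C[T]$; this is the single genuine computation --- writing $Y_i=T_{\textup{t}(\varpi_i^\vee)}$, choosing a reduced word and commuting $T_j$ through it using only the braid and quadratic relations of Definition~\ref{defAHAB} --- and the case $x^{-\varpi_i^\vee}$ follows by inverting.

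For (i), surjectivity of the multiplication map $H_0\otimes\C_Y[T]\to H$ means $H=\sum_{u\in W_0}T_u\,\C_Y[T]$. From \eqref{eqLusztigB} one first gets $\C_Y[T]\,T_j\subseteq H_0\,\C_Y[T]$, hence $\C_Y[T]\,H_0\subseteq H_0\,\C_Y[T]$; combining this with the identity $Y^{\phi^\vee}=T_{\textup{t}(\phi^\vee)}=T_0T_{s_\phi}$ (valid since $\textup{t}(\phi^\vee)=s_0s_\phi$ with additive lengths), which puts $T_0\in\C_Y[T]\,H_0$, and with the analogous fact for the elements of $\Omega$, a reduced-word induction on $W=W_0\ltimes P^\vee$ shows every $T_w$ lies in $\sum_{u\in W_0,\,\mu\in P^\vee}\C\,T_uY^\mu$. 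Injectivity of the multiplication map then comes from a triangularity argument comparing $\{T_uY^\mu\}$ with $\{T_w\}_{w\in W}$ (after the leading-term identification above the two sets match). This proves (i) and completes (ii).

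Finally, for uniqueness, let $A$ be any associative algebra satisfying (i)--(iii). By (ii), $A$ contains canonical copies of the finite Hecke algebra $H_0$ --- whose multiplication is fixed by its braid/quadratic presentation --- and of $\C[T]\cong\C_Y[T]$ (the group algebra $\C[P^\vee]$), and by (i) every element of $A$ is uniquely $\sum_i h_if_i(Y)$ with $h_i\in H_0$, $f_i\in\C[T]$. To determine the whole multiplication it suffices to know $f(Y)T_w$ for $f\in\C[T]$ and $w\in W_0$; by induction on $\ell(w)$ and repeated use of \eqref{eqLusztigB} (whose right-hand side we have seen lands in $\C_Y[T]$) one moves $f(Y)$ rightwards, obtaining $f(Y)T_w=\sum_{u\in W_0}T_u\,g_u(Y)$ with the $g_u\in\C[T]$ determined entirely by the data in (ii)--(iii). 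Hence the multiplication on $A$ is forced, so the linear isomorphism $H_0\otimes\C_Y[T]\xrightarrow{\sim}A$ of (i) is an algebra isomorphism $H\xrightarrow{\sim}A$. I expect the main obstacle to be the existence half --- specifically the combinatorial control of $W=W_0\ltimes P^\vee$ needed for the spanning and triangularity steps in (i), together with the base-case computation in (iii) --- whereas the uniqueness argument above is essentially formal.
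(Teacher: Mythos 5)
The paper does not prove Theorem~\ref{HcharB}: the result is attributed to Bernstein and Zelevinsky with a citation to \cite[\S4.2]{M}, so there is no in-house argument to compare against. Your reconstruction is a reasonable account of the standard proof. The uniqueness half is correct and, as you say, formal: (ii) pins the multiplication on the two subalgebras, and iterated use of \eqref{eqLusztigB} forces $f(Y)T_w$ for all $w\in W_0$, hence the whole algebra structure on $H_0\otimes\C[T]$. For existence, the ingredients are there in the right order: length-additivity of dominant translations for the definition and commutativity of the $Y^\lambda$; the multiplicativity observation reducing \eqref{eqLusztigB} to $x^{\pm\varpi_i^\vee}$; and $\C_Y[T]H_0\subseteq H_0\C_Y[T]$ together with $T_0=Y^{\phi^\vee}T_{s_\phi}^{-1}$ and the $\Omega$-identities for the spanning step in (i). The one place your sketch understates the work is the base case, which you describe as ``choosing a reduced word and commuting $T_j$ through it.'' The genuinely favorable feature of reducing to fundamental coweights is that $\langle\varpi_i^\vee,\alpha_j\rangle\in\{0,1\}$, which turns the base case into length arithmetic rather than braid-word manipulation: for $i\neq j$, $\textup{t}(\varpi_i^\vee)(a_j)\in S_+$ gives $Y_iT_j=T_jY_i$ directly, while for $i=j$ the identity $T_{\textup{t}(\varpi_i^\vee)}=T_{\textup{t}(\varpi_i^\vee)s_i}T_i$ combined with $T_i^2=(k_i-k_i^{-1})T_i+1$ reduces \eqref{eqLusztigB} to $Y^{s_i\varpi_i^\vee}=T_i^{-1}Y^{\varpi_i^\vee}T_i^{-1}$. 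Verifying that last equality (with $s_i\varpi_i^\vee$ non-dominant, so $Y^{s_i\varpi_i^\vee}$ is not a single $T_w^{\pm1}$) is exactly the nontrivial part; it is where Lusztig's and Macdonald's proofs spend their effort, typically via a faithful representation (cf.\ Theorem~\ref{CherednikB}) or a graded-algebra reduction rather than by braid-relation chasing. The two triangularity appeals (injectivity of $\C[T]\hookrightarrow H$ and linear independence of $\{T_uY^\mu\}$) are likewise correct in spirit but need an explicit order or a faithful module to be made precise. None of this is a logical flaw in your plan; it is where the computation you defer actually lives.
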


\begin{rema}
Note that
$b_j(Y^{-1})\left(f(Y)-(s_jf)(Y)\right)\in\C_Y[T]$ although
$b_j(Y^{-1})$ by itself is not defined as an element of
$H$.
\end{rema}
We end this section with the definition of the double affine Hecke
algebra and state some of its key results. All of this is due to
Cherednik; see \cite{C}. It starts with the realization of the
affine Hecke algebra inside the algebra $\C(T)\#_q W$ of
$q$-difference reflection operators.

\begin{thm}\label{CherednikB}
There is a unique injective algebra homomorphism
$\rho=\rho_{\underline{k},q}\colon H\to\C(T)\#_q W$ satisfying
\[\begin{split}
\rho(T_i)&=k_i+c_i(X)(s_i-1),\qquad i=0,\ldots,N,\\
\rho(\omega)&=\omega,\hspace{3.5cm}\omega\in\Omega.
\end{split}\]
\end{thm}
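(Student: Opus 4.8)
The plan is to verify that the prescribed assignments on $T_0,\dots,T_N$ and on $\Omega$ respect all the defining relations of $H$, then invoke the presentation of $H$ to conclude existence of $\rho$, and finally prove injectivity by comparing with the known size of $H$. First I would check the quadratic (Hecke) relations: set $\rho(T_i)=k_i+c_i(X)(s_i-1)$ and compute $(\rho(T_i)-k_i)(\rho(T_i)+k_i^{-1})$. Using $s_i^2=1$ in $\C(T)\#_q W$, the operator $c_i(X)(s_i-1)$ satisfies $\bigl(c_i(X)(s_i-1)\bigr)^2 = c_i(X)\bigl(s_i c_i(X) s_i - s_i c_i(X) - c_i(X) s_i + c_i(X)\bigr)(s_i-1)$; since $s_i c_i(X) s_i = (s_i c_i)(X) = c_{s_i a_i}(X) = c_{-a_i}(X)$ and one has the identity $c_a(t)+c_{-a}(t) = k_a + k_a^{-1}$ (a direct computation from the definition of $c(z;\kappa)$, using $t^{(-a)^\vee}=t^{-a^\vee}$), this collapses to $(k_i-k_i^{-1})\,c_i(X)(s_i-1)$, which is exactly $(k_i-k_i^{-1})\,(\rho(T_i)-k_i)$. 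Hence $\rho(T_i)$ satisfies $(\rho(T_i)-k_i)(\rho(T_i)+k_i^{-1})=0$, the relation in Definition \ref{defAHAB}(ii).

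Next I would check the braid relations among the $T_i$ and the relations $\omega T_i \omega^{-1} = T_{\omega(i)}$ defining the smash product with $\Omega$. The latter is the easiest point: since $\rho(\omega)=\omega$ acts on $\C(T)$ by $(\omega f)(t)=f(\omega^{-1}t)$ and permutes the simple affine roots, $\omega c_i(X)\omega^{-1} = c_{\omega(a_i)}(X) = c_{\omega(i)}(X)$ and $\omega s_i \omega^{-1} = s_{\omega(i)}$, so $\omega \rho(T_i)\omega^{-1} = \rho(T_{\omega(i)})$ immediately. The braid relations are the genuine computational core; the standard approach (Cherednik, or Macdonald \cite[\S4.3]{M}) is \emph{not} to verify each $m_{ij}$-fold identity by brute force but to reformulate: introduce the intertwiners or, more efficiently, use the fact that $\rho(T_i)$ can be written as $k_i s_i + c_i(X)(s_i-1) - (k_i s_i - k_i) = \dots$ — more precisely, rewrite $\rho(T_i) = \bigl(k_i - k_i^{-1} + (k_i^{-1}-k_i t^{a_i^\vee})(1-t^{a_i^\vee})^{-1}\bigr)$ wait — rather, use that $\rho(T_i) + b_i(X)^{-1}$-type manipulations identify the $\rho(T_i)$ with a known family (the Demazure–Lusztig operators) for which the braid relations are classical; alternatively, one checks the braid relations hold after specializing $\underline k\equiv 1$ (where $\rho(T_i)=s_i$ and the relations are those of $W_{Q^\vee}$) and argues by a deformation/rationality argument that they persist, since both sides are rational in the $k_i$ and in $X$. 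I would present the direct verification via the rank-two sub-root-system reduction: each braid relation involves only $s_i,s_j$ and the corresponding rank-two affine (or finite) root subsystem, so it suffices to check types $A_1\times A_1$, $A_2$, $B_2$, $G_2$ and the affine $\widetilde A_1$ case, which are finite explicit computations in $\C(T)\#_q W$.

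Having the relations, existence of the algebra homomorphism $\rho\colon H\to \C(T)\#_q W$ follows from the presentation of $H = H_{Q^\vee}\#\Omega$ by generators and relations (Definition \ref{defAHAB} together with the $\Omega$-action). For injectivity, the cleanest route is to use the Bernstein–Zelevinsky presentation (Theorem \ref{HcharB}): one shows that $\rho$ restricted to $\C_Y[T]$ is injective — concretely, $\rho(Y^{\lambda})$ for $\lambda\in P^\vee$ are $q$-difference reflection operators whose "leading" reflection-group component, read off by expanding in the basis $\{w\}_{w\in W}$ of $\C(T)\#_q W$ over $\C(T)$, involves $\textup{t}(\lambda)$ with a nonzero coefficient, so distinct monomials $Y^\lambda$ have $\C(T)$-linearly independent images; combined with injectivity of $\rho|_{H_0}$ (which reduces to the finite Hecke algebra case, classical) and the tensor decomposition $H\simeq H_0\otimes\C[T]$, a triangularity/leading-term argument with respect to the length filtration on $W$ gives that $\rho$ is injective on all of $H$. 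The main obstacle is the braid-relation verification: it is the only step that is neither formal nor a citation, and handling $G_2$ (and the affine rank-two case tied to $a_0$) requires either a careful intertwiner reformulation or a patient direct calculation; I would lean on the intertwiner identity $\rho(T_i) = k_i + c_i(X)(s_i - 1)$ being conjugate, via multiplication by a suitable rational function, to the operator $s_i$ composed with a scalar, reducing the braid relations to those already known in $W$.
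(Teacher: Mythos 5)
The paper does not prove Theorem \ref{CherednikB}: it states it and cites Cherednik's book \cite{C} (``All of this is due to Cherednik; see \cite{C}''). So there is no in-paper proof to compare against; your proposal must be judged on its own merits as an outline of the classical argument.

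Your overall plan (verify the quadratic, braid, and $\Omega$-conjugation relations to get a homomorphism out of the presentation of $H_{Q^\vee}\#\Omega$, then prove injectivity by a leading-term argument against the Bernstein--Zelevinsky basis) is the standard route and sound in structure. However, there is a concrete algebraic error in the one step you actually carry out, namely the quadratic relation. Writing $A:=c_i(X)(s_i-1)=\rho(T_i)-k_i$, the Hecke relation $(\rho(T_i)-k_i)(\rho(T_i)+k_i^{-1})=0$ is equivalent to $A^2=-(k_i+k_i^{-1})A$, not $A^2=(k_i-k_i^{-1})A$ as you assert. And indeed, using $s_ic_i(X)=c_{-a_i}(X)s_i$ and the identity $c_{a_i}+c_{-a_i}=k_i+k_i^{-1}$ (which you correctly record), the middle bracket $(s_i-1)c_i(X)(s_i-1)$ collapses to $(k_i+k_i^{-1})(1-s_i)=-(k_i+k_i^{-1})(s_i-1)$, giving $A^2=-(k_i+k_i^{-1})A$. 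Your claimed identity $A^2=(k_i-k_i^{-1})A$ would yield $(\rho(T_i)-k_i)(\rho(T_i)-2k_i+k_i^{-1})=0$, which is not the Hecke relation; so the step as written is wrong and does not prove what you assert, even though the correct computation (one sign/term away) does. The braid-relation verification is hand-waved, which is acceptable in a sketch, though the passage where you start to rewrite $\rho(T_i)$ and then abandon it mid-sentence should be excised; either of the concrete strategies you name afterwards (rank-two reduction, a rationality-in-$\underline{k}$ deformation argument, or conjugation of normalized intertwiners to the $W$-action) is viable and is essentially what one finds in \cite{C} or \cite[\S4.3]{M}. The injectivity argument via $\C(T)$-linear independence of the leading reflection components of $\rho(Y^\lambda)$, combined with $H\simeq H_0\otimes\C_Y[T]$, is correct in outline.

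In short: reasonable standard sketch, but fix the sign in the quadratic relation ($-(k_i+k_i^{-1})$, not $k_i-k_i^{-1}$), since as written the one computation you actually perform does not establish the Hecke relation.
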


\begin{rema}
The image $\rho(H)$ preserves $\C[T]$, viewed as a subspace of the
canonical $\C(T)\#_qW$-module $\C(T)$. The resulting faithful
representation of $H$ on $\C[T]$ is called the basic
representation of $H$.
\end{rema}

\begin{defi}
The double affine Hecke algebra
$\mathbb{H}=\mathbb{H}(\underline{k},q)$ is the subalgebra of
$\C(T)\#_q W$ generated by $H$ (i.e. by
$\rho_{\underline{k},q}(H)$) and by the multiplication operators
$f(X)$ ($f\in\C[T]$).
\end{defi}

\begin{rema}
Note that $\rho=\rho_{\underline{k},q}$ and
$\mathbb{H}=\mathbb{H}(\underline{k},q)$ actually depend on
$q^{\frac{1}{m}}$ (see Remark \ref{qDependenceB}).
\end{rema}

We view $\mathbb{H}$ as a left $\C[T]$-module by $(f,h)\mapsto
f(X)h$ ($f\in\C[T]$, $h\in \mathbb{H}$). The rule $f\otimes
h\mapsto f(X)h$ ($h\in H$, $f\in\C[T]$) induces an isomorphism of
$\C[T]$-modules
\begin{equation}\label{linisomHHB}
\C[T]\otimes H\simeq\mathbb{H},
\end{equation}
Similarly to Theorem \ref{HcharB}, the algebra structure of
$\mathbb{H}$ can be described in terms of the left-hand side of
\eqref{linisomHHB}, allowing for an abstract definition of
$\mathbb{H}$:
\begin{thm}
The double affine Hecke algebra $\mathbb{H}$ can be
characterized as the unique associative algebra satisfying\\
{\bf(i)} $\C[T]\otimes H\simeq \mathbb{H}$ as complex vector spaces;\\
{\bf(ii)} the canonical maps $H,\C[T]\hookrightarrow\mathbb{H}$ are
algebra embeddings;\\
{\bf(iii)} the following cross relations are satisfied: for
$f\in\C[T]$
\begin{eqnarray}\label{XcrossB}
    T_jf(X)\hspace{-0.25cm}&=&\hspace{-0.25cm}(s_jf)(X)T_j+
    b_j(X)\left(f(X)-(s_jf)(X)\right),
    \quad j=0,\ldots,N,\\
    \omega f(X)\hspace{-0.25cm}&=&\hspace{-0.25cm}(\omega
    f)(X)\omega,\quad \omega\in\Omega.\label{XcrossOmegaB}
\end{eqnarray}
\end{thm}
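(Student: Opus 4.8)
The plan is to first verify that the concretely defined algebra $\mathbb{H}\subseteq\C(T)\#_q W$ satisfies (i)--(iii), and then to show that these three conditions pin down the multiplication, so any algebra satisfying them is forced to be isomorphic to $\mathbb{H}$. Condition (i) is precisely the $\C[T]$-module isomorphism \eqref{linisomHHB}. For (ii), the map $f\mapsto f(X)$ is an algebra homomorphism $\C[T]\to\mathbb{H}$ which is injective because $f(X)$ applied to the constant function $1\in\C(T)$ returns $f$; and $H\hookrightarrow\mathbb{H}$ is the Cherednik embedding $\rho$ of Theorem \ref{CherednikB}, whose image lies in $\mathbb{H}$ by construction. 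For (iii), relation \eqref{XcrossOmegaB} holds already in the smash product $\C(T)\#_q W$. Relation \eqref{XcrossB} is a short computation in $\C(T)\#_q W$: from $\rho(T_j)=k_j+c_j(X)(s_j-1)$ and $s_j\,g(X)=(s_jg)(X)\,s_j$ one finds $\rho(T_j)f(X)-(s_jf)(X)\rho(T_j)=(k_j-c_j(X))\bigl(f(X)-(s_jf)(X)\bigr)$, which equals $b_j(X)\bigl(f(X)-(s_jf)(X)\bigr)$ since $b_j=k_j-c_j$. One should also record that $b_j(X)\bigl(f(X)-(s_jf)(X)\bigr)$ is genuinely a Laurent-polynomial multiplication operator, because $f-s_jf$ vanishes on the pole locus of $b_j$; hence the right-hand side of \eqref{XcrossB} indeed lies in $\C[T]\cdot H\subseteq\mathbb{H}$.

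For uniqueness I would argue as in the Bernstein--Zelevinsky presentation behind Theorem \ref{HcharB}. Let $\widetilde{\mathbb{H}}$ be the associative algebra presented by generators $T_0,\dots,T_N$, the elements of $\Omega$, and symbols $x^{\pm\lambda}$ ($\lambda\in P^\vee$), subject to the defining relations of $H$, the relations of the group algebra $\C[P^\vee]$, and the cross relations (iii) --- where in \eqref{XcrossB} the term $b_j(X)\bigl(f(X)-(s_jf)(X)\bigr)$ is read as the explicit element of the $\C[P^\vee]$-part identified in the first paragraph. Repeatedly applying \eqref{XcrossB} and \eqref{XcrossOmegaB} moves every $T_j$ and every $\omega$ to the right of every $x^\lambda$; collapsing the resulting word in the $T_j$ and $\Omega$ via the $H$-relations into a combination of basis elements $T_w$, and the resulting product of monomials via $x^\lambda x^\mu=x^{\lambda+\mu}$, shows that $\widetilde{\mathbb{H}}$ is spanned by $\{x^\lambda T_w\}_{\lambda\in P^\vee,\,w\in W}$. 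The canonical surjection $\widetilde{\mathbb{H}}\twoheadrightarrow\mathbb{H}$ carries this spanning set onto $\{X^\lambda T_w\}$, which by \eqref{linisomHHB} (together with the facts that $\{x^\lambda\}$ is a basis of $\C[P^\vee]$ and $\{T_w\}$ a basis of $H$) is a basis of $\mathbb{H}$; hence the surjection is an isomorphism and $\{x^\lambda T_w\}$ is a basis of $\widetilde{\mathbb{H}}$.

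Finally, let $\mathbb{H}'$ be any algebra satisfying (i)--(iii), reading (i) as usual via the multiplication map $f\otimes h\mapsto\iota_{\C[T]}(f)\iota_H(h)$, where $\iota_H\colon H\hookrightarrow\mathbb{H}'$ and $\iota_{\C[T]}\colon\C[T]\hookrightarrow\mathbb{H}'$ are the embeddings of (ii). These embeddings send the generators of $\widetilde{\mathbb{H}}$ to elements satisfying all of its defining relations (the $H$- and $\C[P^\vee]$-relations because $\iota_H,\iota_{\C[T]}$ are algebra homomorphisms, the cross relations by (iii)), so there is a unique algebra map $\widetilde{\mathbb{H}}\to\mathbb{H}'$ extending them. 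It is surjective because $\mathbb{H}'$ is generated by $\iota_H(H)$ and $\iota_{\C[T]}(\C[T])$ (by (i)), and it sends the basis $\{x^\lambda T_w\}$ of $\widetilde{\mathbb{H}}$ onto the basis $\{\iota_{\C[T]}(x^\lambda)\iota_H(T_w)\}$ of $\mathbb{H}'$ (again by (i)), hence is an isomorphism; composing with $\widetilde{\mathbb{H}}\cong\mathbb{H}$ gives $\mathbb{H}'\cong\mathbb{H}$. The step requiring the most care --- the ``hard part'' in spirit, though essentially routine --- is the straightening argument of the second paragraph, i.e.\ confirming that \eqref{XcrossB}--\eqref{XcrossOmegaB} genuinely suffice to push all torus elements to the left and that the resulting normal form is consistent, exactly as in the Bernstein--Zelevinsky argument underlying Theorem \ref{HcharB}.
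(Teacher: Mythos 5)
The paper states this theorem without proof, attributing the whole DAHA framework to Cherednik's book \cite{C}, so there is no paper proof to compare against. Your argument is correct and is the standard Bernstein--Zelevinsky-style presentation argument one would give: verify (i)--(iii) in the concrete model $\mathbb{H}\subset\C(T)\#_qW$ via the formula for $\rho(T_j)$ (your computation $\rho(T_j)f(X)-(s_jf)(X)\rho(T_j)=b_j(X)\bigl(f(X)-(s_jf)(X)\bigr)$ is right, and the observation that the right-hand side lies in $\C[T]$ because $f-s_jf$ is divisible by $1-x^{a_j^\vee}$ is exactly the well-definedness point that needs to be flagged), build the presented algebra $\widetilde{\mathbb{H}}$, get the PBW spanning set $\{x^\lambda T_w\}$ by straightening, import linear independence from the surjection onto the faithful model $\mathbb{H}$, and then show any $\mathbb{H}'$ satisfying (i)--(iii) receives a map from $\widetilde{\mathbb{H}}$ carrying this basis to the basis $\{\iota_{\C[T]}(x^\lambda)\iota_H(T_w)\}$ furnished by (i).
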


A crucial ingredient in the construction of the bispectral quantum
KZ equations is Cherednik's duality anti-involution on
$\mathbb{H}$ (see \cite[Thm. 1.4.8]{C}).

\begin{thm}\label{dualityInvoB}
There exists a unique anti-algebra involution $*\colon
\mathbb{H}\rightarrow \mathbb{H}$ determined by
\begin{equation*}
\begin{split}
T_w^*&=T_{w^{-1}},\qquad\: w\in W_0,\\
(Y^{\lambda})^*&=X^{-\lambda},\qquad \lambda\in P^\vee,\\
(X^{\lambda})^*&=Y^{-\lambda},\qquad \lambda\in P^\vee.
\end{split}
\end{equation*}
\end{thm}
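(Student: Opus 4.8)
The plan is to establish the duality anti-involution by checking that the prescribed assignment on generators is compatible with the defining relations of $\mathbb{H}$ given in the abstract characterization (the cross relations \eqref{XcrossB}, \eqref{XcrossOmegaB}, together with the relations of $H$ and $\C[T]$), and that it squares to the identity. First I would note that the three families $\{T_w\}_{w\in W_0}$, $\{X^\lambda\}_{\lambda\in P^\vee}$, $\{Y^\lambda\}_{\lambda\in P^\vee}$ generate $\mathbb{H}$: indeed $H_0$ together with $\C_Y[T]$ generates $H$ by Theorem \ref{HcharB}(i), and $H$ together with $\C[T]$ generates $\mathbb{H}$ by \eqref{linisomHHB}; the elements $\omega\in\Omega$ are recovered since $W=W_0\ltimes P^\vee$ and $\Omega\subset W$, so each $\omega$ is a product of some $T_w$ ($w\in W_0$) and some $Y^\lambda$. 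Hence, if a compatible assignment exists on these generators, uniqueness is automatic.

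The existence part is where the real work lies, and I see two natural routes. The cleaner one uses the realization $\rho\colon\mathbb{H}\hookrightarrow\C(T)\#_q W$ from Theorem \ref{CherednikB}: one exhibits $*$ as (the restriction of) an anti-automorphism of $\C(T)\#_q W$, or more precisely constructs it via a suitable pairing. The standard construction (Cherednik) is to use the non-degenerate bilinear form on $\mathbb{H}$ coming from the basic representation, $\langle h_1,h_2\rangle:=\bigl(h_1\cdot h_2\cdot 1\bigr)|_{\text{const term}}$ or an $L^2$-type pairing, and define $*$ as the adjoint with respect to it; anti-multiplicativity is then formal, and one computes the adjoints of $T_w$, $X^\lambda$, $Y^\lambda$ directly. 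Alternatively, the more hands-on route is to take the prescribed values $T_w^*=T_{w^{-1}}$ ($w\in W_0$), $(X^\lambda)^*=Y^{-\lambda}$, $(Y^\lambda)^*=X^{-\lambda}$ as a definition of a map on generators, extend it anti-multiplicatively to the free algebra on these generators, and verify it kills every defining relation. This means checking: the finite braid and quadratic Hecke relations among $T_1,\dots,T_N$ (immediate, since $w\mapsto w^{-1}$ is an anti-automorphism of $W_0$ and the quadratic relation is symmetric in $T_j\leftrightarrow T_j$); the commutativity $X^\lambda X^\mu=X^{\lambda+\mu}$ and likewise for $Y$ (immediate, and the two families swap roles consistently); and — the substantive point — the cross relations between the $T_j$ and the $X$'s and $Y$'s.

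The main obstacle is precisely the compatibility of $*$ with the mixed relations, and here the asymmetry noted in the introduction (the $T_0$ and $\Omega$ generators of the affine, as opposed to finite, Hecke algebra have no uniform description) bites. Concretely, applying $*$ to Lusztig's relation \eqref{eqLusztigB}, $f(Y)T_j=T_j(s_jf)(Y)+b_j(Y^{-1})(f(Y)-(s_jf)(Y))$ for $j=1,\dots,N$, should land on the $X$-cross relation \eqref{XcrossB} for the same $j$ (using $Y^{-\lambda}\mapsto X^\lambda$, reversing the order, and the identity $b_j(t)=k_j-c_j(t)$ relating the two normalizations); and conversely \eqref{XcrossB} for $j=1,\dots,N$ must map back to \eqref{eqLusztigB}. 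This is a finite but genuine computation with the rational functions $b_j,c_j$ and the commutation of $X^\lambda$ past $T_j$ versus $Y^\lambda$ past $T_j$; one must be careful that $b_j(X)(f(X)-(s_jf)(X))$ and $b_j(Y^{-1})(f(Y)-(s_jf)(Y))$ genuinely lie in the respective polynomial subalgebras (as flagged in the Remark after Theorem \ref{HcharB}) so that the formulas make sense. For the affine generator $T_0$ and the $\Omega$-part, rather than a uniform formula I would exploit that $T_0$ and each $\omega\in\Omega$ can be written in terms of $T_1,\dots,T_N$ and the $Y^\lambda$ (via $Y^{\varpi_i^\vee}=T_{\mathrm{t}(\varpi_i^\vee)}$ and the structure $W=W_0\ltimes P^\vee$), so that the relations involving them are consequences of the ones already checked; alternatively one verifies $T_0^*=T_{s_0}$ and $\omega^*=\omega^{-1}$ directly against \eqref{XcrossOmegaB} and the $j=0$ instance of \eqref{XcrossB}. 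Finally, $*^2=\mathrm{id}$ is checked on generators: $(T_{w^{-1}})^{-1}\!\!$ — rather, $*$ applied twice sends $T_w\mapsto T_{w^{-1}}\mapsto T_w$, $X^\lambda\mapsto Y^{-\lambda}\mapsto X^\lambda$, $Y^\lambda\mapsto X^{-\lambda}\mapsto Y^\lambda$, so $*^2$ fixes a generating set and, being an algebra automorphism, equals the identity; in particular $*$ is bijective, completing the proof that it is an anti-involution.
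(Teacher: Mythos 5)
The paper does not actually prove Theorem~\ref{dualityInvoB}: it cites Cherednik's book \cite[Thm.~1.4.8]{C} and uses the statement as a black box. So there is no ``paper's own proof'' to compare against; the fair comparison is between your outline and the standard argument in the literature, which is the relation-checking route you describe as your second option (the adjoint/pairing route is \emph{not} how this is usually done, and you do not develop it).

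Your outline has the right skeleton: observe that $\{T_w\}_{w\in W_0}$, $\{X^\lambda\}$, $\{Y^\lambda\}$ generate $\mathbb{H}$ (giving uniqueness), extend the prescribed assignment anti-multiplicatively to a free algebra, check the defining relations, and deduce $*^2=\mathrm{id}$ from its action on generators. The computation you flag as ``the substantive point'' --- that applying $*$ to Lusztig's relation \eqref{eqLusztigB} for $1\le j\le N$ reproduces the $X$-cross relation \eqref{XcrossB} for the same $j$ --- does go through: writing $\bar f(t):=f(t^{-1})$ one has $(f(Y))^*=\bar f(X)$, $(s_j f)^{\,\bar{}}=s_j\bar f$, and the rational function $b_j(t^{-1})\bigl(f(t)-(s_jf)(t)\bigr)$ inverts under $t\mapsto t^{-1}$ to $b_j(t)\bigl(\bar f(t)-(s_j\bar f)(t)\bigr)$, so the two relations are precisely exchanged. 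You do not carry this out, but the claim is correct.

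The genuine soft spot is the handling of the $j=0$ cross relation and the $\Omega$-relations. You propose to view $T_0$ and $\omega\in\Omega$ as derived elements (e.g.\ $T_0=Y^{\phi^\vee}T_{s_\phi}^{-1}$, $\omega=T_wY^{w^{-1}(\varpi_j^\vee)}T_{v_jw}^{-1}$), so that the relations involving them become consequences of the ones already checked. This is the right idea, but it silently relies on the nontrivial fact that $\mathbb{H}$ admits a presentation purely in terms of $T_1,\dots,T_N$, $X^\lambda$, $Y^\lambda$ with the finite Hecke relations, the two commuting Laurent polynomial algebras, and the two families of cross relations as \emph{defining} relations --- i.e.\ that the $j=0$ and $\Omega$ cross relations are redundant given the rest. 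That is true, but it is exactly the content one needs to establish (or cite) for the argument to close; as written you invoke it without justification. Since the paper itself outsources the whole theorem to \cite[Thm.~1.4.8]{C}, where this bookkeeping is done, your outline is a fair reconstruction of the standard proof with the main verifications deferred rather than wrong.
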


\section{Bispectral quantum KZ equations}\label{SectionBqKZB}

In this section we extend the construction of the bispectral
quantum Knizhnik-Zamolodchikov equations for $\textup{GL}_N$
\cite{MS} to arbitrary root systems. First we recall Cherednik's
construction of the quantum affine Knizhnik-Zamolodchikov equations
\cite{CqKZ} associated with a finite-dimensional $H$-module.

\subsection{The quantum affine KZ equations}\label{subsecqKZB}

In order to define the quantum KZ equations we first need to
left-localize the double affine Hecke algebra $\mathbb{H}\simeq
\C[T]\otimes H$ (see Theorem \ref{linisomHHB}) with respect to
$\C[T]\setminus\{0\}$. As a complex vector space, the resulting
algebra $\widehat{\mathbb{H}}$ can be defined as
$\widehat{\mathbb{H}}\simeq \C(T)\otimes H$ and then its algebra
structure is determined by requiring $\C(T)$ and $H$ to be
subalgebras, and by requiring the cross relations \eqref{XcrossB}
and \eqref{XcrossOmegaB} to hold for $f\in\C(T)$.

The injective map $\rho$ of Theorem \ref{CherednikB} extends to an
injective algebra homomorphism
\[
\rho\colon\widehat{\mathbb{H}}\to\C(T)\#_q W
\]
by setting $\rho(f(X))=f(X)$ for $f\in\C(T)$. Note that
$\rho\big(c_j(X)^{-1}(T_j-b_j(X))\big)=s_j$
for $0\leq j\leq N$, which shows that $\rho$ is surjective and
therefore establishes an isomorphism
$\widehat{\mathbb{H}}\simeq\C(T)\#_q W$. Restricting the inverse
$\rho^{-1}$ to $W$ gives a realization of $W$ inside
$\widehat{\mathbb{H}}^\times$.

The left multiplication map turns $H$ into a left module over
itself. The action of $\widehat{\mathbb{H}}$ on the induced module
$\textnormal{Ind}_H^{\widehat{\mathbb{H}}}(H)=\widehat{\mathbb{H}}\otimes_H
H$ can be pushed forward along the linear isomorphism
$\widehat{\mathbb{H}}\otimes_H H\simeq\C(T)\otimes H$ to obtain an
algebra homomorphism
\[
\pi=\pi_{\underline{k}}\colon\widehat{\mathbb{H}}\to\textup{End}(\C(T)\otimes
H).
\]
We regard $\C(T)\#_qW\otimes H$ as a subalgebra of
$\textup{End}(\C(T)\otimes H)$ by letting $\C(T)\#_q W$ act on
$\C(T)$ as in Subsection \ref{subsecAlgofqDiffB}, and $H$ on $H$
by left multiplication. Then the pullback
$\tau_x=\tau_{x,\underline{k}}:=\pi\circ\rho^{-1}$ of $\pi$ along
$\rho^{-1}$ is an algebra homomorphism
\[
\tau_x\colon\C(T)\#_q W\to\C(T)\#_qW\otimes
H\subset\textup{End}(\C(T)\otimes H),
\]
which is explicitly given by
\[
\begin{split}
\tau_x(f)&=f(X)\otimes 1,\qquad\qquad f\in\C(T),\\
\tau_x(s_j)&=(c_j(X)^{-1}\otimes1)(s_j\otimes
T_j-b_j(X)s_j\otimes 1),\qquad 0\leq j\leq N,\\
\tau_x(\omega)&=\omega\otimes\omega,\qquad \omega\in\Omega,
\end{split}
\]
as can be verified by a direct computation using the formula for
$\rho^{-1}$ and the cross relations \eqref{XcrossB}.

\begin{rema}
The reason for the subscript $x$ in $\tau_x$ will become apparent
in the next subsection when we discuss the bispectral story. Then
two copies of $T$ will play a role and $x$ will denote the
coordinate functions on one of them.
\end{rema}

Note that $\tau_x(s_j)=F_{s_j}(X)(s_j\otimes 1)$ with
$F_{s_j}(X)=(c_j(X)^{-1}\otimes 1)(1\otimes
T_j-b_j(X)\otimes 1)\in\C(T)\otimes H$ and trivially
also $\tau_x(\omega)=F_\omega(X)(\omega\otimes1)$ with
$F_\omega=1\otimes\omega\in\C(T)\otimes H$. In fact, more
generally, we have
\[
\tau_x(w)=F_w(X)(w\otimes1),\qquad w\in W,
\]
where $F_w$ are $H$-valued rational functions on $T$ satisfying
\begin{equation}\label{qKZcocycleB}
F_{e}(t)=1,\quad F_{vw}(t)=F_v(t)F_{w}(v^{-1}t)
\end{equation}
for all $v,w\in W$ and $t\in T$. Viewed as elements of
$\textup{End}(\C(T)\otimes H)$ the $F_w(X)$ ($w\in W$) are
$\C(T)$-linear and invertible (indeed $F^{-1}_w(X)=(w^{-1}\otimes
1)\tau_x(w^{-1})$). In the language of non-abelian group
cohomology, \eqref{qKZcocycleB} means that $w\mapsto F_w(X)$
constitutes a cocycle $W\to\textup{GL}_{\C(T)}(\C(T)\otimes H)$,
where $\textup{GL}_{\C(T)}(\C(T)\otimes H)$ is a $W$-group via the
usual action of $W$ on the first tensor leg of
$\C(T)\otimes\textup{End}(H)\simeq\textup{GL}_{\C(T)}(\C(T)\otimes
H)$.

Now let $M$ be a left module over the affine Hecke algebra $H$.
Then $M^{\mathcal{M}(T)}=\mathcal{M}(T)\otimes M$ is a module over
$\C(T)\#_qW\otimes H$, where $\C(T)\#_qW$ acts on $\mathcal{M}(T)$
as described in subsection \ref{subsecAlgofqDiffB}. Consequently,
$\tau_x$ gives rise to a representation
\[
\tau_x^M\colon W\to\textup{GL}(M^{\mathcal{M}(T)}),
\]
defining $\tau^M_x(w)$ ($w\in W$) to be
$\tau_x(w)\in\C(T)\#_qW\otimes H$ acting on $M^{\mathcal{M}(T)}$.
Let $F^M_w$ ($w\in W$) denote the corresponding functions
$F_w\in\C(T)\otimes H$ acting on $M^{\mathcal{M}(T)}$. For
simplicity we write $F^M_{\lambda}=F^M_{\textup{t}(\lambda)}$ for
$\lambda\in P^\vee$.

\begin{defi}[Cherednik \cite{CqKZ}]
The $q$-difference equations
\begin{equation}\label{qKZCherB}
F^M_{\lambda}(t)f(q^{-\lambda}t)=f(t),\qquad \lambda\in P^\vee
\end{equation}
for $f\in\mathcal{M}(T)\otimes M$, are called the quantum affine KZ (qKZ)
equations for the $H$-module $M$.
\end{defi}

From the cocycle condition \eqref{qKZcocycleB} and the fact that
$P^\vee$ is an abelian subgroup of $W$, it follows immediately
that the qKZ equations form a holonomic system of
$q$-difference equations, that is,
\[
F^M_\lambda(t)F^M_{\mu}(q^{-\lambda}t)=F^M_\mu(t)F^M_\lambda(q^{-\mu}t)
\]
for all $\lambda,\mu\in P^\vee$.

In this paper we will restrict our attention to a particular
representation of $H$. Recall that $H\simeq H_0\otimes\C_Y[T]$
(cf. Theorem \ref{HcharB}). Fix $\zeta\in T$ and let
$\chi_\zeta\colon\C_Y[T]\to\C$ be the evaluation character
$f(Y)\mapsto f(\zeta)$ for $f\in\C[T]$. We define $M_\zeta$ to be
the induced $H$-module
$M_\zeta:=\textup{Ind}^H_{\C_Y[T]}(\chi_\zeta)=H\otimes_{\chi_\zeta}\C$.
It is the minimal principal series representation of $H$ with
central character $\zeta$. As complex vector spaces we identify
$M_\zeta\simeq H_0$ via
\begin{equation}\label{MzetaIdH0}
T_w\otimes_{\chi_\zeta}1\mapsto T_w, \qquad(w\in
W_0, f\in\C[T]).
\end{equation}
The qKZ equations corresponding to $M_\zeta$ thus can be
viewed as a holonomic system of $q$-difference equations for
meromorphic functions $f(t)$ on $T$ with values in $H_0$. Now
$\mathbb{H}\simeq \C[T]\otimes H$, so that since $H\simeq
H_0\otimes \C[T]$, the  double affine Hecke algebra $\mathbb{H}$
contains another copy of $\C[T]$. In view of Cherednik's duality
anti-isomorphism one might ask, when $\zeta$ is considered as a
variable $\gamma$ on the second torus, whether one can find a set
of $q$-difference equations acting on this central character
$\gamma$, such that together with the original qKZ equations it
makes up a holonomic system of $q$-difference equations for
meromorphic functions $f(t,\gamma)$ on $T\times T$ with values in
$H_0$. The answer turns out to be positive. The idea is as follows.

The construction of the qKZ equations depended on the realization of $W$ inside
the localization of $\mathbb{H}$ by sending the $w$ to the
so-called normalized intertwiners $\rho^{-1}(w)$. Of course, we
can multiply these intertwiners by appropriate factors from
$\C[T]$ to obtain elements $\widetilde{S}_w$ which do live in
$\mathbb{H}$. Clearly, the map $W\to\mathbb{H}^\times$,
$w\mapsto\widetilde{S}_w$  will no longer be a group homomorphism
(like $\rho^{-1}$), but the $\widetilde{S}_w$ still serve as
intertwining elements from which a cocycle can be constructed.
Then Cherednik's duality anti-isomorphism can be invoked to obtain
$Y$-intertwining elements and extend the cocycle to a `double
cocycle' which will give rise to the bispectral quantum KZ
equations. This is explained in the following subsection.

\subsection{Bispectral quantum KZ equations}\label{subsecBqKZB}
The construction of the bispectral quantum KZ equations in the
present setting is more or less the same as in the $\textup{GL}_N$
case, which was done in \cite[\S3]{MS}. Here we repeat the
construction, but, since it is a matter of simply adapting the
notations from \cite{MS}, we omit the proofs.

In view of the last paragraph of the previous subsection we should
first renormalize the intertwiners so that they become members of
$\mathbb{H}$. We put
\[
\begin{split}
\widetilde{S}_i&:=(k_i-k_i^{-1}X^{-a_i^\vee})s_i\in\C(T)\#_qW,\quad i=0,\ldots,N\\
\widetilde{S}_\omega&:=\omega\in\C(T)\#_qW,\quad\omega\in\Omega,
\end{split}
\]
giving rise to the renormalized intertwiners $\widetilde{S}_w$
($w\in W$), defined in the following proposition (see also
\cite[\S1.3]{C}).

\begin{prop}\label{IntPropB}
Let $w=s_{i_1}\cdots s_{i_r}\omega$ be a reduced expression for
$w\in W$ ($i_1,\ldots,i_r\in\{0,\ldots,N\}$, $\omega\in\Omega$).
Then
\begin{itemize}
    \item[{\bf(i)}] $\widetilde{S}_w:=\widetilde{S}_{i_1}\cdots
    \widetilde{S}_{i_r}\widetilde{S}_\omega$ is a
    well-defined element of $\C(T)\# W$;

    \item[{\bf(ii)}] $\widetilde{S}_w\in\mathbb{H}$, in particular
    $\widetilde{S}_i=(1-X^{-a_i^\vee})T_i+(k_i-k_i^{-1})X^{-a_i^\vee}$
    ($0\leq i\leq N$);

    \item[{\bf(iii)}] the $\widetilde{S}_{i}$ ($i=0,\dots,N$)
    satisfy the braid relations (cf. Definition \ref{defAHAB}(i));

    \item[{\bf(iv)}] $\widetilde{S}_wf(X)=(wf)(X)\widetilde{S}_w$
    for $w\in W$, $f\in\C[T]$;

    \item[{\bf(v)}]
    $\widetilde{S}_i\widetilde{S}_i=(k_i-k_i^{-1}X^{a_i^\vee})
    (k_i-k_i^{-1}X^{-a_i^\vee})$ for $i=0,\ldots N$.
\end{itemize}
\end{prop}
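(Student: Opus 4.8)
The plan is to follow Cherednik's standard strategy for intertwiner calculus, adapted to the extended affine setting. The element $\widetilde{S}_i = (k_i - k_i^{-1}X^{-a_i^\vee})s_i$ is, up to the invertible scalar factor $c_i(X)^{-1}$ hidden inside, a renormalization of the normalized intertwiner $\rho^{-1}(s_i) = c_i(X)^{-1}(T_i - b_i(X))$. Concretely, I would first verify the identity $\widetilde{S}_i = (1-X^{-a_i^\vee})T_i + (k_i-k_i^{-1})X^{-a_i^\vee}$ inside $\C(T)\#_q W$ by a direct computation: expand $\rho(T_i) = k_i + c_i(X)(s_i-1)$, solve for $s_i$, substitute into $(k_i - k_i^{-1}X^{-a_i^\vee})s_i$, and use $b_i(X) = k_i - c_i(X)$ together with the explicit form $c_i(t) = (k_i^{-1} - k_i t^{a_i^\vee})/(1 - t^{a_i^\vee})$. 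This simultaneously establishes the first half of (ii): since the right-hand side visibly lies in the subalgebra generated by $\C[T]$ (note $X^{-a_i^\vee}\in\C[T]$ because $a_i^\vee\in P^\vee$) and $T_i\in H$, it lies in $\mathbb{H}$; and $\widetilde{S}_\omega = \omega\in\Omega\subset H\subset\mathbb{H}$ trivially.

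Next I would establish the intertwining property (iv) for the generators, i.e. $\widetilde{S}_i f(X) = (s_i f)(X)\widetilde{S}_i$ for $f\in\C[T]$, and $\omega f(X) = (\omega f)(X)\omega$. The second is just \eqref{XcrossOmegaB}. For the first, the cleanest route is to note that $s_i\in\C(T)\#_q W$ satisfies $s_i f(X) = (s_i f)(X) s_i$ by definition of the smash product, and $X^{-a_i^\vee}$ is central against multiplication operators, so $\widetilde{S}_i f(X) = (k_i - k_i^{-1}X^{-a_i^\vee})s_i f(X) = (k_i - k_i^{-1}X^{-a_i^\vee})(s_i f)(X)s_i = (s_i f)(X)(k_i - k_i^{-1}X^{-a_i^\vee})s_i = (s_i f)(X)\widetilde{S}_i$, using that multiplication operators commute. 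Property (iv) for general $w$ then follows from (i) by induction on $\ell(w)$, together with $\widetilde{S}_\omega f(X) = (\omega f)(X)\widetilde{S}_\omega$. Property (v) is a short direct computation: $\widetilde{S}_i\widetilde{S}_i = (k_i - k_i^{-1}X^{-a_i^\vee})s_i(k_i - k_i^{-1}X^{-a_i^\vee})s_i = (k_i - k_i^{-1}X^{-a_i^\vee})(k_i - k_i^{-1}s_i(X^{-a_i^\vee}))s_i^2$, and $s_i(X^{-a_i^\vee}) = X^{-s_i(a_i^\vee)} = X^{a_i^\vee}$ since $s_i(a_i) = -a_i$, while $s_i^2 = 1$, giving the stated product; in particular this shows $\widetilde{S}_i$ is invertible in $\C(T)\#_q W$.

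The two remaining points, (i) well-definedness and (iii) the braid relations, are really one problem: I expect this to be the main obstacle. The argument is the classical one. Since $\widetilde{S}_i = c_i(X)\,\rho^{-1}(s_i)$ up to the correct invertible rational factor — more precisely one checks $\widetilde{S}_i = \eta_i(X)\rho^{-1}(s_i)$ for an explicit $\eta_i\in\C(T)^\times$ — the $\rho^{-1}(s_i)$ satisfy the braid relations because they are the images of the Coxeter generators under the isomorphism $\widehat{\mathbb H}\simeq\C(T)\#_q W$ restricted to $W$. To transfer the braid relations to the $\widetilde{S}_i$, one uses the intertwining property (iv): in a braid relation $\widetilde{S}_i\widetilde{S}_j\widetilde{S}_i\cdots = \widetilde{S}_j\widetilde{S}_i\widetilde{S}_j\cdots$ with $m_{ij}$ factors, each side can be written as $(\text{rational function in }X)\cdot\rho^{-1}(w_0^{ij})$ where $w_0^{ij}$ is the longest element of the rank-two parabolic and the rational prefactor is a product of $s$-translates of the $\eta_i$; because $s_is_j s_i\cdots = s_j s_i s_j\cdots$ as elements of $W$, these rational prefactors coincide (the set of roots swept out is the same), so the two sides agree. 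The well-definedness of $\widetilde{S}_w$ independent of reduced expression then follows, since Matsumoto's lemma reduces any two reduced words to one another via braid moves, and the $\Omega$-part $\widetilde{S}_\omega = \omega$ causes no trouble because $\omega T_i\omega^{-1} = T_{\omega(i)}$ permutes generators and $\omega$ acts on $\C[T]$ compatibly. The delicate bookkeeping lies in confirming that the rational prefactors on the two sides of each rank-two braid relation are literally equal as elements of $\C(T)$; this is where the computation with the explicit functions $b_a, c_a$ and the combinatorics of $S(w)$ for rank-two root systems has to be carried out, exactly as in \cite[\S1.3]{C}.
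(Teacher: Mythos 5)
Your proposal is correct and follows the same route as the paper, which itself defers the crux---that the rational prefactor $d_w := d_{i_1}(s_{i_1}d_{i_2})\cdots(s_{i_1}\cdots s_{i_{r-1}}d_{i_r})$ is independent of the chosen reduced word---to Cherednik \cite[\S1.3]{C}; your reduction to the rank-two braid relations via Matsumoto's theorem is exactly that argument, and it can in fact be closed without any rank-two case analysis by observing $d_w(t)=\prod_{a\in S(w^{-1})}\bigl(k_a-k_a^{-1}t^{-a^\vee}\bigr)$, a product over the intrinsically defined set $S(w^{-1})$ (using $W$-invariance of $\underline{k}$ and \cite[(2.2.5),(2.2.9)]{M}). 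One small slip: for $i=0$ it is not literally true that $a_0^\vee\in P^\vee$ (indeed $a_0^\vee=-\phi^\vee+c$), but $X^{-a_0^\vee}=q^{-1}X^{\phi^\vee}\in\C[T]$ by the convention $X^{\lambda+rc}=q^rX^\lambda$, so your conclusion $\widetilde{S}_i\in\mathbb{H}$ is unaffected.
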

For $0\leq i\leq N$ define $d_i\in\C[T]$ by
$d_i(t):=(k_i-k_i^{-1}t^{-a_i^\vee})$. Then for $w\in W$ as in the
proposition we have
\[
\widetilde{S}_w=d_{i_1}(X)(s_{i_1}d_{i_2})(X)\cdots (s_{i_1}\cdots
s_{i_{r-1}}d_{i_r})(X)w.
\]
The proof of part (i) of the proposition relies on the fact that
\[
d_w:=d_{i_1}(s_{i_1}d_{i_2})\cdots (s_{i_1}\cdots
s_{i_{r-1}}d_{i_r})
\]
is independent of the reduced expression for $w$.

Now the `double cocycle' we are going to construct is a cocycle of
$W\times W$. In fact, it turns out to be convenient to anticipate
the role that the anti-involution of $\mathbb{H}$ will play and
extend $W\times W$ as follows. Note that the two-group $\Z_2$ acts
on $W\times W$ by $\iota(w,w')=(w',w)$, where $\iota\in\Z_2$
denotes the nontrivial element. Then we put
\[
\mathbb{W}:=\Z_2\ltimes(W\times W).
\]

Furthermore, the cocycle will act on $H_0$-valued meromorphic
functions on $T\times T$. Let us write
$\mathbb{K}:=\mathcal{M}(T\times T)$ for the field of meromorphic
functions on $T\times T$. Moreover, write
$\mathbb{L}:=\C[T]\otimes\C[T]\simeq \C[T\times T]$ for the ring
of complex valued regular functions on $T\times T$. It acts on
$\mathbb{H}$ via
\begin{equation}\label{LmodstrB}
(f\otimes g)\cdot h:=f(X)hg(Y)
\end{equation}
for $f,g\in\C[T]$ and $h\in \mathbb{H}$. We will usually write
$(t,\gamma)$ for a typical point of $T\times T$. Let
$x=(x_1,\ldots,x_N)$ denote the coordinate functions of the first
copy of $T$ in $T\times T$ and $y=(y_1,\ldots,y_N)$ the coordinate
functions of the second copy. For $f\in\C[T]$ we define
$f(x)\in\mathbb{L}$ by the rule $(t,\gamma)\mapsto f(t)$, and
$f(y)\in\mathbb{L}$ by $(t,\gamma)\mapsto f(\gamma)$. We use the
same conventions for $f(x),f(y)\in\mathbb{K}$ when
$f\in\mathcal{M}(T)$.

An intermediate step in the construction of a $\mathbb{W}$-action
on $H^{\mathbb{K}}=\mathbb{K}\otimes H_0$ are the complex linear
endomorphisms $\sigma_{(w,w')}$ ($w,w'\in W$) of $\mathbb{H}$
defined by
\begin{equation*}
\begin{split}
\sigma_{(w,w^\prime)}(h)&=
\widetilde{S}_wh\widetilde{S}_{w^\prime}^*,\\
\sigma_\iota(h)&=h^*
\end{split}
\end{equation*}
for $h\in\mathbb{H}$. As a corollary of Proposition \ref{IntPropB} we have
\begin{lem}\label{sigmapropB}
The complex linear endomorphisms
$\sigma_{(w,w^\prime)}$ and $\sigma_\iota$
of $\mathbb{H}$ satisfy:
\begin{enumerate}
\item[{\bf (i)}] the $\sigma_{(s_i,e)}$ ($i=0,\ldots,N$)
    satisfy the braid relations;
\item[{\bf (ii)}] $\sigma_{(s_i,e)}^2=d_{s_i}(x)(s_id_{s_i})(x)
    \cdot\textup{id}_{\mathbb{H}}$ for $i=0,\ldots,N$;
\item[{\bf (iii)}] $\sigma_{(\omega,e)}\sigma_{(s_i,e)}
    \sigma_{(\omega^{-1},e)}=\sigma_{(s_{\omega(i)},e)}$
    for $i=0,\ldots,N$ and $\omega\in\Omega$;
\item[{\bf (iv)}] $\sigma_\iota^2=\textup{id}_{\mathbb{H}}$ and
    $\sigma_{(e,w)}=
    \sigma_\iota\sigma_{(w,e)}
    \sigma_{\iota}$ for $w\in W$;
\item[{\bf (v)}]
    $\sigma_{(w,e)}\sigma_{(e,w^\prime)}=
    \sigma_{(w,w^\prime)}=\sigma_{(e,w^\prime)}
    \sigma_{(w,e)}$ for $w,w^\prime\in W$.
\end{enumerate}
\end{lem}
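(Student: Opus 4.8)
\emph{Proof proposal.}
The plan is to derive all five assertions directly from Proposition~\ref{IntPropB}, the properties of the duality anti-involution $*$ of Theorem~\ref{dualityInvoB}, and the $\mathbb{L}$-module structure \eqref{LmodstrB}, after unwinding the definitions of $\sigma_{(w,w^\prime)}$ and $\sigma_\iota$. Since $\widetilde{S}_e=1$, and hence $\widetilde{S}_e^*=1$, we have $\sigma_{(w,e)}(h)=\widetilde{S}_wh$ and $\sigma_{(e,w^\prime)}(h)=h\widetilde{S}_{w^\prime}^*$ for all $h\in\mathbb{H}$; the identity $\widetilde{S}_wh\widetilde{S}_{w^\prime}^*=\sigma_{(w,w^\prime)}(h)$ then reads off from associativity in $\mathbb{H}$ in both groupings, which is precisely (v) (and shows in passing that $\sigma_{(w,e)}$ and $\sigma_{(e,w^\prime)}$ commute). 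In particular $\sigma_{(s_i,e)}$ is left multiplication by $\widetilde{S}_i$ on $\mathbb{H}$, so the braid relations for the $\widetilde{S}_i$ from Proposition~\ref{IntPropB}(iii) transfer verbatim to the $\sigma_{(s_i,e)}$, giving (i). Assertion (ii) is likewise immediate from Proposition~\ref{IntPropB}(v): one only has to match $\widetilde{S}_i^2=(k_i-k_i^{-1}X^{a_i^\vee})(k_i-k_i^{-1}X^{-a_i^\vee})$ with left multiplication by $(d_{s_i}\cdot(s_id_{s_i}))(X)$, using $d_{s_i}=d_i$ with $d_i(X)=k_i-k_i^{-1}X^{-a_i^\vee}$ and, since $s_ia_i^\vee=-a_i^\vee$, $(s_id_i)(X)=k_i-k_i^{-1}X^{a_i^\vee}$, together with the fact that by \eqref{LmodstrB} left multiplication by $f(X)$ is exactly the action of $f(x)\in\mathbb{L}$ on $\mathbb{H}$.

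For (iv), $\sigma_\iota^2=\textup{id}_{\mathbb{H}}$ is just that $*$ is an involution, and since $*$ is an anti-homomorphism of $\mathbb{H}$ (note $\widetilde{S}_w\in\mathbb{H}$ by Proposition~\ref{IntPropB}(ii)) we compute $\sigma_\iota\sigma_{(w,e)}\sigma_\iota(h)=(\widetilde{S}_wh^*)^*=(h^*)^*\widetilde{S}_w^*=h\widetilde{S}_w^*=\sigma_{(e,w)}(h)$. For (iii), using $\widetilde{S}_\omega=\omega$ we get $\sigma_{(\omega,e)}\sigma_{(s_i,e)}\sigma_{(\omega^{-1},e)}(h)=\omega\widetilde{S}_i\omega^{-1}h$, so it remains to verify $\omega\widetilde{S}_i\omega^{-1}=\widetilde{S}_{\omega(i)}$, i.e.\ that conjugation by $\omega$ carries the $i$-th renormalized intertwiner to the $\omega(i)$-th one. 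Starting from $\widetilde{S}_i=(1-X^{-a_i^\vee})T_i+(k_i-k_i^{-1})X^{-a_i^\vee}$ (Proposition~\ref{IntPropB}(ii)) and conjugating by $\omega$ via \eqref{XcrossOmegaB} and the $\Omega$-action $\omega T_i\omega^{-1}=T_{\omega(i)}$ on $H$, and using that $\Omega$ permutes the simple affine roots through $\omega(a_i)=a_{\omega(i)}$ (so $\omega(a_i^\vee)=a_{\omega(i)}^\vee$) and that $k_i=k_{\omega(i)}$, one reads off $\omega\widetilde{S}_i\omega^{-1}=\widetilde{S}_{\omega(i)}$, as desired.

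All the computations are routine; the only point needing a little care is (ii), where one must track the action of $s_i$ on the affine coroot $a_i^\vee$ and correctly identify $\widetilde{S}_i^2$—a priori an element of $\mathbb{H}$—with the $\mathbb{L}$-action appearing on the right-hand side. No genuine obstacle arises, which is why the lemma is phrased as a corollary of Proposition~\ref{IntPropB}.
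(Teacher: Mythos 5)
Your proof is correct and follows exactly the route the paper intends: the lemma is stated as an immediate corollary of Proposition~\ref{IntPropB}, and you supply the straightforward verifications (left/right multiplication by $\widetilde{S}_w$ and $\widetilde{S}_{w'}^*$, associativity, the involutive anti-homomorphism property of $*$, and $\omega\widetilde{S}_i\omega^{-1}=\widetilde{S}_{\omega(i)}$). The computation in (ii) matching $\widetilde{S}_i^2$ with the $\mathbb{L}$-action via $s_i(a_i^\vee)=-a_i^\vee$ is the one place requiring care, and you handle it correctly.
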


Let us investigate the behavior of these maps under the action of $\mathbb{L}$.
First consider the group involution ${}^\diamond\colon W\rightarrow W$ given by
$w^\diamond=w$ for $w\in W_0$ and
$\lambda^\diamond=-\lambda$ for $\lambda\in P^\vee$. Then
$\mathbb{W}$ acts on $T\times T$ by
\begin{equation*}
\begin{split}
(w,w^\prime)(t,\gamma)&=(wt,w^\prime{}^\diamond\gamma),\\
\iota(t,\gamma)&=(\gamma^{-1},t^{-1})
\end{split}
\end{equation*}
for $w,w^\prime\in W$, where
$t^{-1}:=(t_1^{-1},\ldots,t_N^{-1})\in T$. Transposition yields
an action of $\mathbb{W}$ on $\mathbb{K}$ by field automorphisms and is given by
\begin{equation}\label{doubleactionB}
(\mathrm{w}f)(t,\gamma)=f(\mathrm{w}^{-1}(t,\gamma)), \qquad
\mathrm{w}\in\mathbb{W}.
\end{equation}
Note that $\mathbb{L}=\C[T\times T]$ is a $\mathbb{W}$-subalgebra
of $\mathbb{K}$. As a consequence of the intertwining properties of the
$\widetilde{S}_w$ we have

\begin{lem}\label{actokB}
For $h\in\mathbb{H}$ and $f\in \mathbb{L}$ we have
\begin{equation}\label{catB}
\begin{split}
\sigma_{(w,w^\prime)}(f\cdot h)&=
((w,w^\prime)f)\cdot\sigma_{(w,w^\prime)}(h),\\
\sigma_{\iota}(f\cdot h)&= (\iota f)\cdot
\sigma_{\iota}(h)
\end{split}
\end{equation}
for $w,w^\prime\in W$.
\end{lem}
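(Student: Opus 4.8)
The plan is to reduce the statement for general $w, w' \in W$ to the case of the generators $\widetilde S_i$ ($i = 0,\dots,N$), $\widetilde S_\omega = \omega$ ($\omega\in\Omega$), and the anti-involution $*$, and then verify each of those base cases by a short direct computation. First I would recall that by definition $\sigma_{(w,w')}(h) = \widetilde S_w h \widetilde S_{w'}^*$ and that the $\mathbb{L}$-module structure on $\mathbb{H}$ is $(f\otimes g)\cdot h = f(X) h\, g(Y)$. So $\sigma_{(w,w')}\big((f\otimes g)\cdot h\big) = \widetilde S_w f(X) h\, g(Y) \widetilde S_{w'}^*$. Using Proposition \ref{IntPropB}(iv), $\widetilde S_w f(X) = (wf)(X)\widetilde S_w$, which pushes $f(X)$ past the left intertwiner and turns it into $(wf)(X)$. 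For the right-hand factor, I would apply $*$ to Proposition \ref{IntPropB}(iv): since $*$ is an anti-algebra involution with $(Y^\lambda)^* = X^{-\lambda}$ and $(X^\lambda)^* = Y^{-\lambda}$ (Theorem \ref{dualityInvoB}), applying $*$ to $\widetilde S_{w'} g(X) = (w'g)(X)\widetilde S_{w'}$ and replacing $g$ by $g\circ({}^{-1})$ appropriately yields an identity of the form $g(Y)\widetilde S_{w'}^{*} = \widetilde S_{w'}^{*}\, (w'^\diamond g)(Y)$, where the ${}^\diamond$ twist records precisely the sign flip $\lambda \mapsto -\lambda$ built into the duality anti-involution on the $Y$-side. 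Combining these moves gives $\sigma_{(w,w')}\big((f\otimes g)\cdot h\big) = (wf)(X)\,\widetilde S_w h \widetilde S_{w'}^{*}\, (w'^\diamond g)(Y) = \big((w,w')(f\otimes g)\big)\cdot \sigma_{(w,w')}(h)$, which is exactly \eqref{catB} on the generators $f\otimes g$ of $\mathbb{L}$, hence on all of $\mathbb{L}$ by linearity. The action $(w,w')(t,\gamma) = (wt, w'^\diamond\gamma)$ is set up precisely so that $(w,w')(f\otimes g) = (wf)\otimes(w'^\diamond g)$, so the bookkeeping matches.

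For the $\sigma_\iota$ part, I would compute directly: $\sigma_\iota\big((f\otimes g)\cdot h\big) = \big(f(X) h\, g(Y)\big)^{*} = g(Y)^{*} h^{*} f(X)^{*} = g(X^{-1}\text{-type})\cdots$; more precisely, writing $f = \sum a_\lambda x^\lambda$ and $g = \sum b_\mu x^\mu$, we get $f(X)^* = \sum a_\lambda Y^{-\lambda} = \check f(Y)$ and $g(Y)^* = \sum b_\mu X^{-\mu} = \check g(X)$, where $\check f(t) := f(t^{-1})$. Hence $\sigma_\iota\big((f\otimes g)\cdot h\big) = \check g(X) h^{*} \check f(Y) = (\check g\otimes \check f)\cdot h^{*} = (\iota(f\otimes g))\cdot\sigma_\iota(h)$, since $\iota(t,\gamma) = (\gamma^{-1}, t^{-1})$ transposes the two tori and inverts, giving exactly $\iota(f\otimes g) = \check g\otimes\check f$. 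This disposes of the second identity in \eqref{catB}.

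The only genuinely delicate point — and the one I would write out with care — is the derivation of the $Y$-side intertwining relation $g(Y)\widetilde S_{w'}^{*} = \widetilde S_{w'}^{*}(w'^\diamond g)(Y)$ from $*$ applied to Proposition \ref{IntPropB}(iv), because one must track how the anti-involution interacts with both the group element $w'$ and the sign convention $(X^\lambda)^* = Y^{-\lambda}$ so that the resulting twist is exactly ${}^\diamond$ and not some other automorphism; getting the direction of the inverse and the placement of the $\diamond$ right is where sign errors would creep in. Everything else is a routine consequence of Proposition \ref{IntPropB}(iv), Theorem \ref{dualityInvoB}, and the definitions of the $\mathbb{W}$-actions on $T\times T$ and on $\mathbb{K}$. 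Since all maps in sight are additive and $\mathbb{L}$ is spanned by the $f\otimes g$, checking \eqref{catB} on such elementary tensors suffices.
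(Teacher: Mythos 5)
Your proof is correct, and it follows exactly the route that the paper (which omits the proof, deferring to \cite{MS}) has in mind: write $f\otimes g$ for a generator of $\mathbb{L}$, use Proposition \ref{IntPropB}(iv) to move $f(X)$ past $\widetilde S_w$ on the left, apply the duality anti-involution $*$ to the same identity to derive $g(Y)\widetilde S_{w'}^* = \widetilde S_{w'}^*(w'^\diamond g)(Y)$ on the right, and check that the ${}^\diamond$ twist matches the $\mathbb{W}$-action on $T\times T$. You correctly identify the only subtle point — that $*$ sends $X^\lambda\mapsto Y^{-\lambda}$, which combined with the inversion $t\mapsto t^{-1}$ produces exactly the involution $w'\mapsto w'^\diamond$ on $W$ rather than some other twist — and your sketch of how the $\sigma_\iota$ case follows from $(f(X)hg(Y))^* = g(Y)^*h^*f(X)^* = \check g(X)h^*\check f(Y)$ is also correct.
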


As $\mathbb{L}$-modules we have
$H_0^{\mathbb{K}}\simeq\mathbb{K}\otimes_{\mathbb{L}}\mathbb{H}$,
so the lemma enables us to extend the maps $\sigma_{(w,w^\prime)}$
($w,w^\prime\in W$) and $\sigma_{\iota}$ to complex linear
endomorphisms of $H_0^{\mathbb{K}}$ for which \eqref{catB} holds
for all $f\in\mathbb{K}$ and $h\in H_0^{\mathbb{K}}$. Note that
the properties of $\sigma_{(w,w^\prime)}$ and $\sigma_\iota$ as
described in Lemma \ref{sigmapropB} also hold true as identities
between endomorphisms of $H_0^{\mathbb{K}}$.

We come to the main result of this subsection. It follows from the
previous observations in the same way as the corresponding result
for $\textup{GL}_N$ (see \cite[Thm. 3.3]{MS}).

\begin{thm}
There is a unique group homomorphism
\[
\tau\colon\mathbb{W}\to\textup{GL}_\C(H_0^{\mathbb{K}})
\]
satisfying
\begin{equation}\label{fishB}
\begin{split}
\tau(w,w^\prime)(f)&=d_w(x)^{-1}d_{w^\prime}^\diamond(y)^{-1}\cdot
\sigma_{(w,w^\prime)}(f),\\
\tau(\iota)(f)&=\sigma_{\iota}(f)
\end{split}
\end{equation}
for $w,w^\prime\in W$ and $f\in H_0^{\mathbb{K}}$. It satisfies
$\tau(\mathrm{w})(g\cdot f)=\mathrm{w}g\cdot \tau(\mathrm{w})(f)$
for $g\in\mathbb{K}$, $f\in H_0^{\mathbb{K}}$ and
$\mathrm{w}\in\mathbb{W}$.
\end{thm}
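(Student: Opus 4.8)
The plan is to construct $\tau$ on generators of $\mathbb{W}$ and check that the defining relations are respected, then verify the $\mathbb{K}$-semilinearity separately. First I would note that $\mathbb{W}=\Z_2\ltimes(W\times W)$ is generated by $\iota$ together with the elements $(s_i,e)$, $(e,s_i)$ ($i=0,\ldots,N$) and $(\omega,e)$, $(e,\omega)$ ($\omega\in\Omega$), and that a full set of defining relations is: the braid relations among the $(s_i,e)$ and separately among the $(e,s_i)$; the quadratic relations $(s_i,e)^2=e$, $(e,s_i)^2=e$ in $W\times W$; the $\Omega$-twisting relations $(\omega,e)(s_i,e)(\omega^{-1},e)=(s_{\omega(i)},e)$ and their right-hand analogues; commutation of each left generator with each right generator; $\iota^2=e$; and $\iota(w,e)\iota=(e,w)$. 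So it suffices to define $\tau$ on the generators by \eqref{fishB} and check each of these relations holds in $\textup{GL}_\C(H_0^{\mathbb{K}})$.

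The key observation making this work is that the operators $\tau(s_i,e)$ differ from $\sigma_{(s_i,e)}$ only by the scalar (in the sense of acting by multiplication via $\mathbb{L}$, extended to $\mathbb{K}$) factor $d_{s_i}(x)^{-1}$, and similarly on the right. By Lemma \ref{actokB} (extended to all of $\mathbb{K}$), the $\sigma$'s intertwine the $\mathbb{W}$-action on $\mathbb{K}$ with multiplication, so these scalar corrections are exactly the multiplicative $1$-cochain needed to turn the projective-type behavior of the $\sigma$'s recorded in Lemma \ref{sigmapropB} into genuine relations. Concretely: the braid relations for $\tau(s_i,e)$ follow from Lemma \ref{sigmapropB}(i) once one checks the product of the relevant $d$-factors along both sides of a braid word agrees — this is precisely the well-definedness of $d_w$ noted after Proposition \ref{IntPropB}. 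For the quadratic relation, $\sigma_{(s_i,e)}^2=d_{s_i}(x)(s_id_{s_i})(x)\cdot\textup{id}$ by Lemma \ref{sigmapropB}(ii), while the scalar correction contributes $d_{s_i}(x)^{-1}(s_i d_{s_i}(x))^{-1}$ by semilinearity (the second factor picks up an $s_i$ because $\tau(s_i,e)$ is $s_i$-semilinear on the $x$-variables); these cancel, giving $\tau(s_i,e)^2=\textup{id}$. The $\Omega$-relations follow from Lemma \ref{sigmapropB}(iii) together with $d_\omega=1$, the left-right commutation from Lemma \ref{sigmapropB}(v), and the $\iota$-relations from Lemma \ref{sigmapropB}(iv) combined with the fact that $\sigma_\iota$ carries the $x$-variables to the $y$-variables (with the $\diamond$/inverse twist built into the $\mathbb{W}$-action on $T\times T$), so that $\sigma_\iota$ conjugates $d_w(x)^{-1}$ into $d_w^\diamond(y)^{-1}$, matching the formula for $\tau(e,w)$.

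The semilinearity statement $\tau(\mathrm{w})(g\cdot f)=\mathrm{w}g\cdot\tau(\mathrm{w})(f)$ is then immediate: for generators it is exactly \eqref{catB} (the scalar $d$-corrections are central with respect to the $\mathbb{K}$-module structure, hence do not affect the semilinearity type), and it propagates to all of $\mathbb{W}$ because $\mathrm{w}\mapsto(g\mapsto\mathrm{w}g)$ is a homomorphism. Uniqueness is clear since the generators listed above generate $\mathbb{W}$ and \eqref{fishB} pins down $\tau$ on them.

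The main obstacle I expect is bookkeeping the scalar $d$-factors under semilinearity — in particular making sure that when one composes $\tau(s_{i_1},e)\cdots\tau(s_{i_r},e)$ the accumulated factor is exactly $d_w(x)^{-1}$ with $w=s_{i_1}\cdots s_{i_r}$, since each successive $\tau(s_{i_k},e)$ acts on the $d$-factors already produced via the $s_{i_1}\cdots s_{i_{k-1}}$-twist. This is the computation that makes the braid relations and the comparison with the closed formula $\tau(w,w')(f)=d_w(x)^{-1}d_{w'}^\diamond(y)^{-1}\cdot\sigma_{(w,w')}(f)$ consistent, and it is the only place where the explicit cocycle identity for $d_w$ (equivalently, reduced-word independence of $d_w$) is genuinely used. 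Everything else is a direct transcription of the corresponding argument for $\textup{GL}_N$ in \cite[Thm. 3.3]{MS}.
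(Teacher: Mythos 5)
Your proof is correct and follows essentially the same route the paper intends: the paper itself gives no argument beyond citing the $\textup{GL}_N$ case \cite[Thm. 3.3]{MS} together with the "previous observations" (Lemmas \ref{sigmapropB} and \ref{actokB}), and your generator-and-relations verification — using the $\sigma$-identities of Lemma \ref{sigmapropB}, the semilinearity of Lemma \ref{actokB}, and the reduced-word independence of $d_w$ to absorb the scalar corrections — is exactly that argument spelled out.
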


\begin{rema}\label{remtauxVStau}
Fix $\zeta\in T$. Let $w\in W$ and recall that we write
$\tau_x^{M_\zeta}(w)$ for $\tau_x(w)\in\C(T)\#_q W$ viewed as
endomorphism of $\mathcal{M}(T)\otimes M_\zeta$ as explained in
subsection \ref{subsecqKZB}. Then for $w\in W$,
$f\in\mathcal{M}(T)$ and $h\in H_0\simeq M_\zeta$ (see \eqref{MzetaIdH0}), we have
\[
\tau_x^{M_\zeta}(w)(f\otimes h)=\tau(w,e)(f(x)\otimes
h)(\cdot,\zeta)
\]
as $H_0$-valued meromorphic functions on $T$.
\end{rema}

We are in position to define the $\mathbb{W}$-cocycle with values
in $\textup{GL}_{\mathbb{K}}(H_0^{\mathbb{K}})$, which is a
$\mathbb{W}$-group by the action of $\mathbb{W}$ on the first
tensor leg of $\mathbb{K}\otimes\textup{GL}(H_0)
\simeq\textup{GL}_{\mathbb{K}}(H_0^{\mathbb{K}})$ (cf. subsection
\ref{subsecqKZB}). This $\mathbb{W}$-action on
$\textup{GL}_{\mathbb{K}}(H_0^{\mathbb{K}})$ is denoted without
mentioning the representation map (just as we do for the
$\mathbb{W}$-action on $\mathbb{K}$, cf. \eqref{doubleactionB}).

\begin{cor}\label{CorCocB}
The map $\mathrm{w}\mapsto
C_{\mathrm{w}}:=\tau(\mathrm{w})\mathrm{w}^{-1}$ is a cocycle of
$\mathbb{W}$ with values in the $\mathbb{W}$-group
$\textup{GL}_{\mathbb{K}} (H_0^{\mathbb{K}})$. In other words,
$C_{\mathrm{w}}\in\textup{GL}_{\mathbb{K}}(H_0^{\mathbb{K}})$ and
\[C_{\mathrm{w}\mathrm{w}^\prime}=
C_{\mathrm{w}}\mathrm{w}C_{\mathrm{w}^\prime}\mathrm{w}^{-1}
\]
for all $\mathrm{w},\mathrm{w}^\prime\in\mathbb{W}$.
\end{cor}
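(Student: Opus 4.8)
The plan is to reduce the cocycle identity for $C_{\mathrm w}=\tau(\mathrm w)\mathrm w^{-1}$ to the homomorphism property of $\tau$ together with the compatibility relation $\tau(\mathrm w)(g\cdot f)=\mathrm wg\cdot\tau(\mathrm w)(f)$ already recorded in the Theorem just above. This is a purely formal manipulation, so the emphasis is on bookkeeping rather than computation. First I would note that, by definition of the $\mathbb W$-action on $\textup{GL}_{\mathbb K}(H_0^{\mathbb K})$ through the first tensor leg, an element $\mathrm w\in\mathbb W$ acts on an operator $A\in\textup{GL}_{\mathbb K}(H_0^{\mathbb K})$ by $(\mathrm wA)(f)=\mathrm w\bigl(A(\mathrm w^{-1}f)\bigr)$; in other words $\mathrm wA=\mathrm w\circ A\circ\mathrm w^{-1}$ as $\C$-linear operators, where $\mathrm w$ is the field automorphism of $\mathbb K$ extended to $H_0^{\mathbb K}=\mathbb K\otimes H_0$ acting on the first leg. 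In particular $C_{\mathrm w}=\tau(\mathrm w)\circ\mathrm w^{-1}$ makes sense as a $\C$-linear endomorphism of $H_0^{\mathbb K}$.

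The first thing to check is that $C_{\mathrm w}$ is actually $\mathbb K$-linear and invertible, i.e.\ lies in $\textup{GL}_{\mathbb K}(H_0^{\mathbb K})$. For $\mathbb K$-linearity: for $g\in\mathbb K$ and $f\in H_0^{\mathbb K}$,
\[
C_{\mathrm w}(g\cdot f)=\tau(\mathrm w)\bigl(\mathrm w^{-1}(g\cdot f)\bigr)
=\tau(\mathrm w)\bigl((\mathrm w^{-1}g)\cdot(\mathrm w^{-1}f)\bigr)
=g\cdot\tau(\mathrm w)(\mathrm w^{-1}f)=g\cdot C_{\mathrm w}(f),
\]
using that $\mathrm w^{-1}$ is a ring automorphism of $\mathbb K$ acting on $H_0^{\mathbb K}$ semilinearly and then the intertwining property $\tau(\mathrm w)(h\cdot f')=\mathrm wh\cdot\tau(\mathrm w)(f')$ with $h=\mathrm w^{-1}g$. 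Invertibility is immediate since $\tau(\mathrm w)\in\textup{GL}_\C(H_0^{\mathbb K})$ and $\mathrm w$ is bijective on $\mathbb K$ hence on $H_0^{\mathbb K}$; indeed one computes directly that $C_{\mathrm w}^{-1}=\mathrm w\,\tau(\mathrm w)^{-1}=\mathrm w\,\tau(\mathrm w^{-1})=\mathrm w\circ\tau(\mathrm w^{-1})\circ\mathrm w^{-1}\cdot\mathrm w$, or more cleanly that $\mathrm wC_{\mathrm w^{-1}}\mathrm w^{-1}$ is a two-sided inverse, which will also drop out of the cocycle relation below with $\mathrm w'=\mathrm w^{-1}$.

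For the cocycle relation itself: using the homomorphism property $\tau(\mathrm w\mathrm w')=\tau(\mathrm w)\tau(\mathrm w')$ from the Theorem, we compute as operators on $H_0^{\mathbb K}$
\[
C_{\mathrm w\mathrm w'}=\tau(\mathrm w\mathrm w')\,(\mathrm w\mathrm w')^{-1}
=\tau(\mathrm w)\,\tau(\mathrm w')\,\mathrm w'^{-1}\mathrm w^{-1}
=\tau(\mathrm w)\,\mathrm w^{-1}\cdot\bigl(\mathrm w\,\tau(\mathrm w')\,\mathrm w'^{-1}\,\mathrm w^{-1}\bigr)
=C_{\mathrm w}\,\bigl(\mathrm w\,C_{\mathrm w'}\,\mathrm w^{-1}\bigr)=C_{\mathrm w}\,(\mathrm wC_{\mathrm w'}),
\]
where in the last step the conjugation $A\mapsto\mathrm w\circ A\circ\mathrm w^{-1}$ is exactly the $\mathbb W$-action on $\textup{GL}_{\mathbb K}(H_0^{\mathbb K})$, i.e.\ $\mathrm wC_{\mathrm w'}\mathrm w^{-1}$ in the Corollary's notation means precisely $\mathrm wC_{\mathrm w'}$ acted on by $\mathrm w$ in the $\mathbb W$-group structure. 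I do not expect a genuine obstacle here: the only point requiring mild care is keeping straight the two roles of $\mathrm w$ — as a field automorphism of $\mathbb K$ semilinearly extended to $H_0^{\mathbb K}$, versus as the conjugation operator implementing the $\mathbb W$-action on the endomorphism algebra — and verifying that these are compatible, which is exactly the content of the displayed $\mathbb K$-linearity computation. This is the standard fact that $\mathrm w\mapsto\tau(\mathrm w)\mathrm w^{-1}$ converts a homomorphism lifting the $\mathbb W$-action into a 1-cocycle, and the argument is identical to \cite[Thm.~3.3]{MS} in the $\textup{GL}_N$ case.
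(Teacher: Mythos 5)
Your proof is correct and is exactly the argument the paper implicitly invokes by presenting this as a corollary without proof: $\mathbb{K}$-linearity of $C_{\mathrm{w}}$ follows from the semilinearity property $\tau(\mathrm{w})(g\cdot f)=\mathrm{w}g\cdot\tau(\mathrm{w})(f)$ in the preceding Theorem, and the cocycle identity is an immediate formal consequence of $\tau$ being a group homomorphism, just as in the $\textup{GL}_N$ case \cite[Thm.~3.3]{MS}. (The parenthetical attempt to rewrite $C_{\mathrm{w}}^{-1}$ as $\mathrm{w}\circ\tau(\mathrm{w}^{-1})\circ\mathrm{w}^{-1}\cdot\mathrm{w}$ is garbled, but your alternative identification $C_{\mathrm{w}}^{-1}=\mathrm{w}C_{\mathrm{w}^{-1}}\mathrm{w}^{-1}$, which also drops out of the cocycle relation with $\mathrm{w}'=\mathrm{w}^{-1}$, is the clean way to say it and is correct.)
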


In the same way as the cocycle $F_w$ ($w\in W$) in subsection
\ref{subsecqKZB} gave rise to the quantum KZ equations, the
cocycle $C_\mathrm{w}$ ($\mathrm{w}\in\mathbb{W}$) gives rise to a
holonomic system of $q$-difference equations for meromorphic
functions on $T\times T$ with values in $H_0$. By construction we
have
\begin{equation}\label{actionexplB}
(\tau(\mathrm{w})f)(t,\gamma)=
C_\mathrm{w}(t,\gamma)f(\mathrm{w}^{-1}(t,\gamma))
\end{equation}
for $\mathrm{w}\in\mathbb{W}$ and $f\in H_0^{\mathbb{K}}$. For the
sake of simplicity, write
$C_{(\lambda,\mu)}:=C_{(\textup{t}(\lambda),\textup{t}(\mu))}$ for
$\lambda,\mu\in P^\vee$.

\begin{defi}\label{BqKZB}
We call the $q$-difference equations
\begin{equation}\label{BqKZeqnB}
C_{(\lambda,\mu)}(t,\gamma)f(q^{-\lambda}t,q^\mu\gamma)=f(t,\gamma)
\qquad \forall\, \lambda,\mu\in P^\vee,
\end{equation}
the bispectral quantum KZ (BqKZ) equations. We write $\textup{SOL}$
for the set of solutions $f\in H_0^{\mathbb{K}}$ of \eqref{BqKZeqnB}.
\end{defi}

Let $\mathbb{F}\subset\mathbb{K}$ denote the subfield consisting
of $f\in \mathbb{K}$ satisfying
$(\textup{t}(\lambda),\textup{t}(\mu))f=f$ for all $\lambda,\mu\in
P^\vee$. Furthermore let $\mathbb{W}_0$ denote the subgroup
$\Z_2\ltimes (W_0\times W_0)$ of $\mathbb{W}$.

\begin{cor}
{\bf (i)} The BqKZ equations \eqref{BqKZeqnB} form a holonomic
system of $q$-difference equations, that is
\begin{equation}\label{compatibleB}
C_{(\lambda,\mu)}(t,\gamma)C_{(\nu,\xi)}(q^{-\lambda}t,q^{\mu}\gamma)=
C_{(\nu,\xi)}(t,\gamma)C_{(\lambda,\mu)}(q^{-\nu}t,q^{\xi}\gamma)
\end{equation}
for $\lambda,\mu,\nu,\xi\in P^\vee$, as $\textup{End}(H_0)$-valued
meromorphic
functions in $(t,\gamma)\in T\times T$.\\
{\bf (ii)} The solution space $\textup{SOL}$ of BqKZ is a
$\tau(\mathbb{W}_0)$-invariant $\mathbb{F}$-subspace of
$H_0^{\mathbb{K}}$.
\end{cor}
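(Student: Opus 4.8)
The plan is to derive both parts formally from two facts already at our disposal: that $\tau$ is a group homomorphism $\mathbb{W}\to\textup{GL}_\C(H_0^{\mathbb{K}})$ satisfying the equivariance $\tau(\mathrm{w})(g\cdot f)=\mathrm{w}g\cdot\tau(\mathrm{w})(f)$, and the cocycle identity $C_{\mathrm{w}\mathrm{w}^\prime}=C_{\mathrm{w}}\mathrm{w}C_{\mathrm{w}^\prime}\mathrm{w}^{-1}$ of Corollary~\ref{CorCocB}. The guiding observation is that the translations sit inside $\mathbb{W}$ as the abelian subgroup $\Lambda:=\{(\textup{t}(\lambda),\textup{t}(\mu))\mid\lambda,\mu\in P^\vee\}$, and, unwinding the definition of the $\mathbb{W}$-action on $T\times T$ (in particular the twist by ${}^\diamond$), the inverse $(\textup{t}(\lambda),\textup{t}(\mu))^{-1}=(\textup{t}(-\lambda),\textup{t}(-\mu))$ acts by $(t,\gamma)\mapsto(q^{-\lambda}t,q^{\mu}\gamma)$. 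Plugging this into \eqref{actionexplB} shows that \eqref{BqKZeqnB} is exactly the condition $\tau(\mathrm{u})f=f$ for all $\mathrm{u}\in\Lambda$, so that $\textup{SOL}$ is the $\tau(\Lambda)$-fixed subspace of $H_0^{\mathbb{K}}$. I would establish this identification first, since both parts flow from it.

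For part (i), I would feed two commuting elements $\mathrm{u}=(\textup{t}(\lambda),\textup{t}(\mu))$ and $\mathrm{u}^\prime=(\textup{t}(\nu),\textup{t}(\xi))$ of $\Lambda$ into the cocycle identity. Reading $\mathrm{u}C_{\mathrm{u}^\prime}\mathrm{u}^{-1}$ as multiplication by the $\mathbb{W}$-transform of the $\textup{End}(H_0)$-valued function $C_{\mathrm{u}^\prime}$, i.e.\ by $(t,\gamma)\mapsto C_{(\nu,\xi)}(q^{-\lambda}t,q^{\mu}\gamma)$, the cocycle identity becomes the factorization
\[
C_{(\lambda+\nu,\mu+\xi)}(t,\gamma)=C_{(\lambda,\mu)}(t,\gamma)\,C_{(\nu,\xi)}(q^{-\lambda}t,q^{\mu}\gamma).
\]
Since $\mathrm{u}\mathrm{u}^\prime=\mathrm{u}^\prime\mathrm{u}$ in $\Lambda$, interchanging the roles of $(\lambda,\mu)$ and $(\nu,\xi)$ produces the same left-hand side but with the factors in the opposite order and dilated the other way; comparing the two expressions for $C_{(\lambda+\nu,\mu+\xi)}(t,\gamma)$ yields precisely the compatibility relation \eqref{compatibleB}.

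For part (ii), that $\textup{SOL}$ is a $\C$-subspace is immediate from the identification with $(H_0^{\mathbb{K}})^{\tau(\Lambda)}$. To upgrade to an $\mathbb{F}$-subspace I would use that $\mathbb{F}=\mathbb{K}^{\Lambda}$ by definition, so that for $g\in\mathbb{F}$, $f\in\textup{SOL}$ and $\mathrm{u}\in\Lambda$ the equivariance of $\tau$ gives $\tau(\mathrm{u})(g\cdot f)=(\mathrm{u}g)\cdot\tau(\mathrm{u})(f)=g\cdot f$, whence $g\cdot f\in\textup{SOL}$. For $\tau(\mathbb{W}_0)$-invariance the crucial point is that $\Lambda$ is a \emph{normal} subgroup of $\mathbb{W}$: indeed $P^\vee$ is normal in $W=W_0\ltimes P^\vee$, hence $P^\vee\times P^\vee$ is normal in $W\times W$ and, being stable under the swap $\iota$, it is normal in $\mathbb{W}=\Z_2\ltimes(W\times W)$. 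Then for $\mathrm{v}\in\mathbb{W}_0$, $\mathrm{u}\in\Lambda$ and $f\in\textup{SOL}$ we have $\mathrm{v}^{-1}\mathrm{u}\mathrm{v}\in\Lambda$, and since $\tau$ is a homomorphism $\tau(\mathrm{u})\bigl(\tau(\mathrm{v})f\bigr)=\tau(\mathrm{v})\,\tau(\mathrm{v}^{-1}\mathrm{u}\mathrm{v})f=\tau(\mathrm{v})f$, so $\tau(\mathrm{v})f\in\textup{SOL}$.

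I do not expect a serious obstacle: the corollary is a purely formal consequence of the group-cohomological setup assembled above. The only places calling for care are the bookkeeping of the $\mathbb{W}$-action on $T\times T$ — making sure the ${}^\diamond$ twist produces $q^{-\lambda}$ on the first torus and $q^{\mu}$ on the second in \eqref{actionexplB} — and the (routine) verification that $\Lambda$ is normal in $\mathbb{W}$; everything else is an unwinding of the cocycle identity and the $\tau$-equivariance.
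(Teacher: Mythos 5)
Your proof is correct and is precisely the formal argument the paper leaves implicit (the corollary is stated without proof, following the pattern of \cite[\S3]{MS} for $\mathrm{GL}_N$): identify $\textup{SOL}$ with the $\tau(\Lambda)$-fixed subspace via \eqref{actionexplB}, read \eqref{compatibleB} off the cocycle identity applied to commuting translations, and use $\mathbb{F}=\mathbb{K}^\Lambda$, the $\tau$-equivariance, and normality of $\Lambda$ in $\mathbb{W}$ for part (ii). The bookkeeping of the $\diamond$-twist (so that $(\textup{t}(-\lambda),\textup{t}(-\mu))$ sends $(t,\gamma)$ to $(q^{-\lambda}t,q^{\mu}\gamma)$) is handled correctly.
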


Now fix $\zeta\in T$. By construction, BqKZ (in some sense)
contains Cherednik's qKZ equation associated to the principal
series module $M_\zeta$. Concretely, in view of Remark
\ref{remtauxVStau}, Cherednik's quantum KZ equation
\eqref{qKZCherB} for $M=M_\zeta$ is just
\begin{equation}\label{qKZconcrB}
C_{(\lambda,e)}(t,\zeta)f(q^{-\lambda} t)=f(t),\qquad
\forall\lambda\in P^\vee,
\end{equation}
for $H_0$-valued meromorphic functions $f$ on $T$. In analogy with
BqKZ, we write $\textup{SOL}_\zeta\subset H_0^{\mathcal{M}(T)}$
for the set of solutions of \eqref{qKZconcrB}. Regarding
$H_0^{\mathcal{M}(T)}$ as a vector space over
$\mathcal{E}(T):=\{f\in\mathcal{M}(T)\mid
\textup{t}(\lambda)f=f,\: \forall\lambda\in P^\vee\}$,
$\textup{SOL}_\zeta$ is a $\tau_x^{M_\zeta}(W_0)$-invariant
subspace of $H_0^{\mathcal{M}(T)}$.

\section{Formal principal series representation and the cocycle
values}\label{SectionFormalB}
In this section we investigate the principal series representation
$M_\zeta$ of $H$, when the (fixed) central character $\zeta\in T$
is regarded as a meromorphic variable. This allows us to give
explicit expressions for the cocycle values of the simple
reflections.

\subsection{Formal principal series
representation}\label{subsecFormalB} Recall that
$M_\zeta=\textup{Ind}_{\C_Y[T]}^{H}(\chi_\zeta)$. Now we view
$\C_Y[T]$ as a left $\C_Y[T]$-module by left multiplication and we
put $M:=\textup{Ind}^{H}_{\C_Y[T]}(\C_Y[T])$. We regard $M$ as a
left $H$-module over $\C[T]\simeq\C[\{1\}\times
T]\subset\mathbb{L}$ via
\[
f\cdot (h\otimes_{\C_Y[T]}g(Y))=h\otimes_{\C_Y[T]}(fg)(Y)\qquad
f,g\in\C[T],\,\, h\in H.
\]
Note that $M\simeq\C[\{1\}\times T]\otimes H_0=H_0^{\C[\{1\}\times
T]}$ as modules over $\C[\{1\}\times T]$, hence the representation
map can be regarded as an algebra homomorphism
\[
\eta\colon H\rightarrow \textup{End}_{\C[\{1\}\times
T]}\bigl(H_0^{\C[\{1\}\times T]}\bigr).
\]
Also note that $\textup{End}_{\C[\{1\}\times T]}(H_0^{\C[\{1\}\times
T]}) \simeq \C[\{1\}\times T]\otimes\textup{End}(H_0)$, so we can
and sometimes will regard $\eta(h)$ ($h\in H$) as an
$\textup{End}(H_0)$-valued regular function on $T$ denoted by
$\gamma\mapsto\eta(h)(\gamma)$. By extending the ground ring
$\C[\{1\}\times T]$ to $\mathbb{K}$ we can extend $\eta$ to an
algebra homomorphism
\[\eta\colon H\rightarrow \textup{End}_{\mathbb{K}}(H_0^{\mathbb{K}}).
\]
Similarly, $\eta(h)$ can be viewed as an $\textup{End}(H_0)$-valued function
in $(t,\gamma)\in T\times T$. As such it is constant in $t$, and in case
$h\in H_0$ it is also constant in $\gamma$.

Before being more specific about $\eta$, we need the following
concept (cf. \cite[\S2.6]{M}). A subset $X$ of $P^\vee$ is said to
be saturated if for each $\lambda\in X$ and $\alpha\in R$ we have
$\lambda-r\alpha^\vee\in X$ for all $0\leq
r\leq\langle\lambda,\alpha\rangle$. For $\lambda\in P^\vee$ let
$\Sigma(\lambda)$ denote the smallest saturated subset of $P^\vee$
that contains $\lambda$.

\begin{lem}\label{etaexplicitB}
For $w\in W_0$ and $1\leq i\leq N$ we have
\begin{equation}\label{formulaTIB}
\eta(T_i)T_w=
\begin{cases}
T_{s_iw}\qquad &\hbox{ if } \ell(s_iw)=\ell(w)+1,\\
(k_i-k_i^{-1})T_w+T_{s_iw}\qquad &\hbox{ if }
\ell(s_iw)=\ell(w)-1,
\end{cases}
\end{equation}
and for $p\in\C[T]$ we have
\begin{equation}\label{formulaPYB}
\eta(p(Y))(\gamma)T_e=p(\gamma)T_e
\end{equation}
as regular $H_0$-valued functions in $\gamma$. Moreover, for
$\lambda\in P^\vee$ and $w\in W_0$, we have
\begin{equation}\label{etaYlambdaB}
\eta(Y^\lambda)(\gamma)T_w=\sum_{u\leq
w}p_{u,w}^\lambda(\gamma)T_u,
\end{equation}
where $p_{u,w}^\lambda(\gamma)\in\textup{span}_\C
\{\gamma^\mu\}_{\mu\in\Sigma(\lambda_+)}$ and
$p_{w,w}^\lambda(\gamma)=\gamma^{w^{-1}(\lambda)}$.
\end{lem}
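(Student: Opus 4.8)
The plan is to unwind the definition of the induced module $M=\textup{Ind}^H_{\C_Y[T]}(\C_Y[T])$ together with the Bernstein--Zelevinsky presentation of $H$ from Theorem \ref{HcharB}. Under the vector space identification $M\simeq H_0^{\C[\{1\}\times T]}$ coming from $H\simeq H_0\otimes\C_Y[T]$, the element $T_w$ (for $w\in W_0$) corresponds to $T_w\otimes_{\C_Y[T]}1$, and the $\C[T]$-module structure is exactly right multiplication on the $\C_Y[T]$-leg. Formula \eqref{formulaPYB} is then immediate: $p(Y)\cdot(T_e\otimes_{\C_Y[T]}1)=T_e\otimes_{\C_Y[T]}p(Y)=T_e\otimes_{\C_Y[T]}1\cdot p(Y)$, which under the identification is $p(\gamma)T_e$ since the $\C[\{1\}\times T]$-action is multiplication by $p$ in the $\gamma$-variable. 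Formula \eqref{formulaTIB} is just the standard left regular action of the finite Hecke algebra $H_0$ on itself via the quadratic and braid relations (Definition \ref{defAHAB}(ii)): $T_iT_w=T_{s_iw}$ when $\ell(s_iw)=\ell(w)+1$, and $T_iT_w=(k_i-k_i^{-1})T_w+T_{s_iw}$ when $\ell(s_iw)=\ell(w)-1$, using $T_i^2=(k_i-k_i^{-1})T_i+1$.

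The substance of the lemma is \eqref{etaYlambdaB}. First I would reduce to $\lambda$ dominant (i.e.\ $\lambda=\lambda_+$): for general $\lambda$ write $\lambda=\mu-\nu$ with $\mu,\nu\in P^\vee_+$, so $Y^\lambda=Y^\mu(Y^\nu)^{-1}$, but it is cleaner to first establish the dominant case and then note that the saturated set $\Sigma(\lambda_+)$ is $W_0$-stable and contains $\Sigma(\mu)$ for all $\mu$ in the $W_0$-orbit, handling the non-dominant case by a separate but parallel argument (or by invoking that the coefficients live in a $W_0$-stable span). For $\lambda\in P^\vee_+$ one has $Y^\lambda=T_{\textup{t}(\lambda)}$, and I would compute $\eta(T_{\textup{t}(\lambda)})T_w$ by choosing a reduced expression for $\textup{t}(\lambda)$ and repeatedly applying the relations of $H$, or more efficiently by using Lusztig's relation \eqref{eqLusztigB} to commute $Y^\lambda$ past the $T_i$'s occurring in $T_w$. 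Specifically, induct on $\ell(w)$: write $w=s_iw'$ with $\ell(w')=\ell(w)-1$, so $T_w=T_iT_{w'}$, hence $\eta(Y^\lambda)T_w=\eta(Y^\lambda)\eta(T_i)T_{w'}$; then rewrite $Y^\lambda T_i$ inside $H$ using \eqref{eqLusztigB}, namely $Y^\lambda T_i=T_i Y^{s_i\lambda}+b_i(Y^{-1})(Y^\lambda-Y^{s_i\lambda})$, apply $\eta(T_i)$ on the left (which lowers or raises length by one in the $H_0$-basis, by \eqref{formulaTIB}), and invoke the inductive hypothesis for $Y^{s_i\lambda}$, $Y^\lambda$, $Y^{s_i\lambda}$ acting on $T_{w'}$. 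The key bookkeeping facts are: (a) $b_i(Y^{-1})(Y^\lambda-Y^{s_i\lambda})\in\C_Y[T]$ with monomials supported on $\{\lambda-r\alpha_i^\vee:0\le r\le\langle\lambda,\alpha_i\rangle\}\subset\Sigma(\lambda)$; (b) the triangularity $u\le w$ is preserved because $\eta(T_i)$ maps $T_u$ into $\C\text{-span}\{T_u,T_{s_iu}\}$ and $\{u,s_iu\}\le w$ whenever $u\le w'$ and $w=s_iw'$ with $\ell(s_iw')>\ell(w')$; (c) the diagonal coefficient: tracking only the top term, $\eta(T_i)$ contributes $T_{s_iw'}=T_w$ from the leading $T_{w'}$, and $Y^{s_i\lambda}$ contributes its diagonal coefficient $\gamma^{w'^{-1}s_i\lambda}=\gamma^{w^{-1}\lambda}$ by induction, while the $b_i(Y^{-1})$-correction term only affects strictly lower $u<w$, giving $p^\lambda_{w,w}(\gamma)=\gamma^{w^{-1}\lambda}$.

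The main obstacle I anticipate is controlling the monomial support of the coefficients $p^\lambda_{u,w}$ and confirming it stays inside $\textup{span}_\C\{\gamma^\mu\}_{\mu\in\Sigma(\lambda_+)}$ through the recursion — each application of Lusztig's relation introduces a factor $b_i(Y^{-1})$, which is not polynomial, but the combination $b_i(Y^{-1})(Y^\lambda-Y^{s_i\lambda})$ \emph{is} a polynomial in $Y$ supported on the $\alpha_i$-string through $\lambda$, all of whose members lie in $\Sigma(\lambda)$ (this is essentially the defining closure property of saturated sets, cf.\ \cite[\S2.6]{M}). The inductive step then requires that $\Sigma(\lambda)$ is $s_i$-stable, so that replacing $\lambda$ by $s_i\lambda$ in the hypothesis does no harm, together with the monotonicity $\Sigma(\nu)\subseteq\Sigma(\lambda)$ whenever $\nu$ appears on a root string through an element of $\Sigma(\lambda)$. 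I would isolate this as a small combinatorial sublemma about saturated sets before running the main induction, after which the length induction on $w$ closes routinely.
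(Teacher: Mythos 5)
Your proposal is correct and takes essentially the same route as the paper: the unique substantive claim is \eqref{etaYlambdaB}, which is proved by induction on $\ell(w)$ using Lusztig's relation \eqref{eqLusztigB} to commute $Y^\lambda$ past $T_i$, after which the $b_i(Y^{-1})$-correction term is a polynomial in $Y$ supported on the $\alpha_i$-string through $\lambda$ (hence inside $\Sigma(\lambda_+)$), and the $T_iY^{s_i\lambda}$ term is handled by the induction hypothesis plus $\Sigma(s_i(\lambda)_+)=\Sigma(\lambda_+)$ and $(w')^{-1}(s_i\lambda)=w^{-1}(\lambda)$. The one detour you would not need is the preliminary reduction to dominant $\lambda$: the paper runs the induction directly for arbitrary $\lambda\in P^\vee$, with the base case being \eqref{formulaPYB} for arbitrary $p\in\C[T]$, and the inductive step is stable under replacing $\lambda$ by $s_i\lambda$ precisely because of the $W_0$-invariance of $\lambda\mapsto\lambda_+$; your own observation to this effect (``the coefficients live in a $W_0$-stable span'') already makes the dominance reduction superfluous.
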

\begin{proof}
Only \eqref{etaYlambdaB} requires proof. We use induction with
respect to the length $\ell(w)$ of $w$, the case $\ell(w)=0$ being
\eqref{formulaPYB}. Next, consider $T_{s_iw}$ with
$\ell(s_iw)=\ell(w)+1$. Using \eqref{eqLusztigB}, we find
\[
\begin{split}
\eta(Y^{\lambda})(\gamma)T_{s_iw}&=\eta(Y^{\lambda}T_i)(\gamma)T_w\\
&=\eta(T_iY^{s_i(\lambda)})(\gamma)T_w+(k_i-k_i^{-1})
\eta\Big(\frac{Y^{\lambda}-Y^{s_i(\lambda)}}
{1-Y^{-\alpha_i^\vee}}\Big)(\gamma)T_w.
\end{split}
\]
Considering the first term we use the induction hypothesis to find
$$\eta(T_iY^{s_i(\lambda)})(\gamma)T_w=T_i\sum_{u\leq w}
\tilde{p}_{u,w}^{s_i(\lambda)}(\gamma)T_u=\sum_{u\leq w}
\tilde{p}_{u,w}^{s_i(\lambda)}(\gamma)T_iT_u,$$ with
$\tilde{p}_{u,w}^{s_i(\lambda)}(\gamma)\in
\textup{span}_\C\{\gamma^\mu\}_{\mu\in\Sigma(s_i(\lambda)_+)}$ and
$\tilde{p}_{w,w}^{s_i(\lambda)}(\gamma)=\gamma^{w^{-1}(s_i(\lambda))}$.
Since $\Sigma(s_i(\lambda)_+)=\Sigma(\lambda_+)$ and
$w^{-1}(s_i(\lambda))=(s_iw)^{-1}(\lambda)$, we can rewrite this
as
\[
\eta(T_iY^{s_i(\lambda)})(\gamma)T_w=\sum_{u\leq s_iw}
p_{u,s_iw}^{\lambda}(\gamma)T_u,
\]
with $p_{u,s_iw}^{\lambda}(\gamma)\in
\textup{span}_\C\{\gamma^\mu\}_{\mu\in\Sigma(\lambda_+)}$ and
$p_{s_iw,s_iw}^{\lambda}(\gamma)=\gamma^{(s_iw)^{-1}(\lambda)}$.

We deal with the second term, the expansion of which will consist
of terms only involving $T_u$ with $u<s_iw$. Set
$n:=\langle\lambda,\alpha_i\rangle$. Note that
\[\frac{Y^{\lambda}-Y^{s_i(\lambda)}}
{1-Y^{-\alpha_i^\vee}}=\left\{\begin{array}{ll}
Y^{\lambda}+Y^{\lambda-\alpha_i^\vee}+\cdots+
Y^{\lambda-(n-1)\alpha_i^\vee},
& n>0,\\
0,& n=0,\\
-Y^{\lambda-n\alpha_i^\vee}-Y^{\lambda-(n+1)\alpha_i^\vee}-\cdots-
Y^{\lambda+\alpha_i^\vee},& n<0,
\end{array}\right.
\]
which is in $\textup{span}_\C\{Y^\mu\}_{\mu\in\Sigma(\lambda_+)}$
in all three cases. We can apply the induction hypothesis to each
of the $Y^\mu$ ($\mu\in\Sigma(\lambda_+)$) to obtain
\[
\eta(Y^\mu)(\gamma)T_w=\sum_{u\leq
w}\check{p}_{u,w}^\mu(\gamma)T_u,
\]
with coefficients $\check{p}_{u,w}^\mu(\gamma)\in\textup{span}_\C
\{\gamma^\nu\}_{\nu\in\Sigma(\mu_+)}$. Since for each
$\mu\in\Sigma(\lambda_+)$ we have $\mu_+\in\Sigma(\lambda_+)$, and
then by \cite[(2.6.3)]{M} $\Sigma(\mu_+)\subset\Sigma(\lambda_+)$,
we obtain the desired expansion.
\end{proof}

We end this subsection by introducing a $\mathbb{K}$-basis of $H_0^{\mathbb{K}}$,
consisting of common eigenfunctions of $\eta(\C_Y[T])$.
Note that $\widetilde{S}^*_w\in H$ for $w\in W_0$. Define
\[
\xi_w:=\eta(\widetilde{S}_{w^{-1}}^*)T_e,\qquad w\in W_0.
\]
Just as we view $\eta(h)$ as $\textup{End}(H_0)$-valued function in different
ways, we will regard $\xi_w$ both as regular $H_0$-valued function in
$\gamma\in T$ and as a meromorphic $H_0$-valued function in
$(t,\gamma)\in T\times T$ (constant in $t$).
\begin{lem}\label{commoneigB}
$\{\xi_w\}_{w\in W_0}$ is a $\mathbb{K}$-basis of
$H_0^{\mathbb{K}}$ consisting of common eigenfunctions for the
$\eta$-action of $\C_Y[T]$ on $H_0^{\mathbb{K}}$. For $p\in\C[T]$
and $w\in W_0$ we have
\begin{equation}\label{eigfcB}
\eta(p(Y))(\gamma)\xi_w(\gamma)=(w^{-1}p)(\gamma)\xi_w(\gamma)
\end{equation}
as $H_0$-valued regular functions in $\gamma\in T$.
\end{lem}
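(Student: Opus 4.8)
The plan is to exploit the intertwining property of the renormalized intertwiners under conjugation with $\C_Y[T]$, which is the $Y$-side analogue of Proposition \ref{IntPropB}(iv), and transfer it through $\eta$. First I would record the key algebraic identity: applying Cherednik's duality anti-involution $*$ of Theorem \ref{dualityInvoB} to the $X$-intertwining relation $\widetilde{S}_w f(X) = (wf)(X)\widetilde{S}_w$, one obtains (for $w\in W_0$, where $w^\diamond = w$) a relation of the form $p(Y)\widetilde{S}_{w^{-1}}^* = \widetilde{S}_{w^{-1}}^* (w^{-1}p)(Y)$ in $\mathbb{H}$, for all $p\in\C[T]$; this is exactly the statement that conjugation by $\widetilde{S}_{w^{-1}}^*$ intertwines the $\C_Y[T]$-action with its $w^{-1}$-twist. (One should double-check the precise form: $*$ sends $\widetilde{S}_i$ to $\widetilde{S}_i^*$ and $X^\lambda \mapsto Y^{-\lambda}$, and since $*$ is an anti-homomorphism, applying it to $\widetilde{S}_{w^{-1}} X^{-\lambda} = (w^{-1}(-\lambda))(X)\widetilde{S}_{w^{-1}}$ — or rather the version with general $f\in\C[T]$ — yields the claimed $Y$-intertwining identity after matching indices $w \leftrightarrow w^{-1}$.)

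Next I would apply $\eta$ and evaluate on the cyclic vector $T_e$. By definition $\xi_w = \eta(\widetilde{S}_{w^{-1}}^*)T_e$, so
\[
\eta(p(Y))(\gamma)\,\xi_w(\gamma) = \eta(p(Y))\eta(\widetilde{S}_{w^{-1}}^*)T_e = \eta\bigl(p(Y)\widetilde{S}_{w^{-1}}^*\bigr)T_e = \eta\bigl(\widetilde{S}_{w^{-1}}^*(w^{-1}p)(Y)\bigr)T_e,
\]
using that $\eta$ is an algebra homomorphism. Since $(w^{-1}p)(Y)$ lies in $\C_Y[T]$ and $\eta(q(Y))(\gamma)T_e = q(\gamma)T_e$ for $q\in\C[T]$ by \eqref{formulaPYB}, this equals $\eta(\widetilde{S}_{w^{-1}}^*)\bigl((w^{-1}p)(\gamma)T_e\bigr) = (w^{-1}p)(\gamma)\,\eta(\widetilde{S}_{w^{-1}}^*)T_e = (w^{-1}p)(\gamma)\,\xi_w(\gamma)$, where in the penultimate step I use $\C[\{1\}\times T]$-linearity of $\eta(\widetilde{S}_{w^{-1}}^*)$ (scalars in $\gamma$ commute past). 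This proves \eqref{eigfcB}. For the basis claim I would argue that $\{\xi_w\}_{w\in W_0}$ is a $\mathbb{K}$-basis: from Proposition \ref{IntPropB}(ii) and induction on length, $\widetilde{S}_{w^{-1}}^*$ expressed via the $T$-basis of $H$ has, applied to $T_e$, a triangular leading term proportional to $T_w$ with an invertible (nonzero rational) coefficient — concretely $\eta(\widetilde{S}_{w^{-1}}^*)T_e = \sum_{u \le w} (\ast)\, T_u$ with top coefficient a product of the $d_i$-type factors evaluated at $\gamma$ — so the transition matrix $(\xi_w)_{w\in W_0}$ against $(T_w)_{w\in W_0}$ is triangular with nonzero diagonal over $\mathbb{K}$, hence invertible; therefore $\{\xi_w\}$ is a $\mathbb{K}$-basis of $H_0^{\mathbb{K}} = \mathbb{K}\otimes H_0$.

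The main obstacle is getting the $Y$-intertwining identity in exactly the right form, i.e. correctly tracking the duality anti-involution and the $w \mapsto w^{-1}$ bookkeeping so that the twist that appears is $w^{-1}p$ (matching the stated eigenvalue) rather than $wp$ or $(w^{-1})^\diamond p$; since $w\in W_0$ the $\diamond$-involution is trivial, which simplifies matters, but one must be careful that $*$ is an anti-homomorphism so the order of factors reverses and $\widetilde{S}_w^* \ne \widetilde{S}_{w^{-1}}$ in general — only the generators $\widetilde{S}_i$ are $*$-related to $\widetilde{S}_i^*$ and one needs Proposition \ref{IntPropB}(iii) (braid relations) to see that the product $\widetilde{S}_{w^{-1}}^* = (\widetilde{S}_{i_r}\cdots\widetilde{S}_{i_1})^*$ is again a well-defined intertwiner of the appropriate type. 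A secondary, more routine point is verifying that $\widetilde{S}_{w^{-1}}^*$ genuinely lies in $H$ (not just $\mathbb{H}$) for $w\in W_0$ — this follows since for $w\in W_0$ all the $d_i(X)$ factors in $\widetilde{S}_w$ can be absorbed, or more simply because $*$ maps the subalgebra generated by $\widetilde{S}_i$ ($i=1,\dots,N$) and $\C_Y[T]$ into $H$ — so that $\eta$ can be applied. Once the intertwining identity is in hand, the computation above is immediate.
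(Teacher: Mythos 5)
Your proof is correct and is the standard argument; the paper itself omits the proof of this lemma entirely, treating it as a known fact about the formal principal series. Applying $*$ to Proposition~\ref{IntPropB}(iv) for $w^{-1}\in W_0$ does give $p(Y)\widetilde{S}_{w^{-1}}^*=\widetilde{S}_{w^{-1}}^*(w^{-1}p)(Y)$ (using that $(f(X))^*=\bar f(Y)$ with $\bar f(x):=f(x^{-1})$ and $\overline{wf}=w\bar f$), and combining this with \eqref{formulaPYB} and the $\C[\{1\}\times T]$-linearity of $\eta(\widetilde{S}_{w^{-1}}^*)$ yields \eqref{eigfcB} exactly as you write. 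The triangularity argument for the basis claim is also sound, though the parenthetical guess that the leading coefficient of $T_w$ in $\xi_w$ is ``a product of $d_i$-type factors'' is slightly off: using $\widetilde{S}_i^*=T_i(1-Y^{\alpha_i^\vee})+(k_i-k_i^{-1})Y^{\alpha_i^\vee}$, the recursion $\xi_{s_iw}=\eta(\widetilde{S}_i^*)\xi_w$ together with the already-established eigenvalue relation gives the top coefficient as a product of factors of the form $1-\gamma^{\beta^\vee}$ over $\beta$ in an inversion set of $w$, which is nonzero in $\mathbb{K}$, and that is all one needs.
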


\subsection{The cocycle values}\label{cocyclevaluesB}
Write
\begin{equation}\label{RiB}
R_i(z;\gamma)=c(z;k_i)^{-1}(\eta(T_i)(\gamma)-b(z;k_i)),\qquad 0\leq
i\leq N,
\end{equation}
viewed as a $\textup{End}(H_0)$-valued function which depends
rationally on $z$ and rationally on $\gamma\in T$ for $i=0$ and is
otherwise $\gamma$-independent.
\begin{lem}\label{YBlemB}
{\bf(i)} We have
\begin{equation*}\label{CisRB}
\begin{split}
C_{(s_i,e)}(t,\gamma)&=R_i(t^{a_i^\vee};\gamma),\qquad 0\leq i\leq N,\\
C_{(\omega,e)}(t,\gamma)&=\eta(\omega)(\gamma),\qquad \omega\in\Omega,
\end{split}
\end{equation*}
and $C_\iota$ is the $\mathbb{K}$-linear extension of the anti-algebra involution
of $H_0$ determined by
\[C_{\iota}(T_w)=T_{w^{-1}},\qquad w\in W_0.\]
{\bf(ii)} $R_i(z;\gamma)R_i(z^{-1};\gamma)=\textup{id}$ for $0\leq
i\leq N$.
\end{lem}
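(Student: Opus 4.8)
The plan is to obtain each cocycle value directly from the definition $C_{\mathrm{w}}=\tau(\mathrm{w})\mathrm{w}^{-1}$ (Corollary \ref{CorCocB}) and the explicit description \eqref{fishB} of $\tau$, and then to deduce part (ii) from part (i). For $C_{(s_i,e)}$ and $C_{(\omega,e)}$ the cleanest route is through Remark \ref{remtauxVStau}: for fixed $\zeta\in T$ it identifies $C_{(w,e)}(\cdot,\zeta)$ with Cherednik's qKZ connection matrix $F^{M_\zeta}_w$ acting on $M_\zeta\simeq H_0$, and the latter is read off from the formulas for $\tau_x$ in Subsection \ref{subsecqKZB}. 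Indeed $\tau_x(s_j)=F_{s_j}(X)(s_j\otimes1)$ with $F_{s_j}(X)=(c_j(X)^{-1}\otimes1)(1\otimes T_j-b_j(X)\otimes1)$, so letting the $H$-tensor leg act on $M_\zeta$ gives $F^{M_\zeta}_{s_i}(t)=c_i(t)^{-1}\bigl(\rho_{M_\zeta}(T_i)-b_i(t)\bigr)$, while $\tau_x(\omega)=\omega\otimes\omega$ gives $F^{M_\zeta}_\omega(t)=\rho_{M_\zeta}(\omega)$; here $\rho_{M_\zeta}$ is the $H$-action on $M_\zeta$. By construction of the formal principal series in Subsection \ref{subsecFormalB}, $\rho_{M_\zeta}(h)$ is the specialization $\eta(h)(\zeta)$ under $M_\zeta\simeq H_0$, so substituting $c_i(t)=c(t^{a_i^\vee};k_i)$, $b_i(t)=b(t^{a_i^\vee};k_i)$ and comparing with \eqref{RiB} yields $C_{(s_i,e)}(t,\zeta)=R_i(t^{a_i^\vee};\zeta)$ and $C_{(\omega,e)}(t,\zeta)=\eta(\omega)(\zeta)$; since $\zeta\in T$ is arbitrary, these are identities of meromorphic functions on $T\times T$.

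For $C_\iota$ I would observe that by \eqref{fishB} we have $\tau(\iota)=\sigma_\iota$, which on $\mathbb{H}\subset H_0^{\mathbb{K}}$ is the duality anti-involution $h\mapsto h^*$ of Theorem \ref{dualityInvoB}. Since $\iota$ fixes the coefficient $1\in\mathbb{K}$ of each basis vector $1\otimes_{\mathbb{L}}T_w$ ($w\in W_0$) of $H_0^{\mathbb{K}}\simeq\mathbb{K}\otimes_{\mathbb{L}}\mathbb{H}$, and $C_\iota$ is $\mathbb{K}$-linear by Corollary \ref{CorCocB}, this forces $C_\iota(T_w)=T_w^*=T_{w^{-1}}$ and $C_\iota$ to be the $\mathbb{K}$-linear extension of that map, which is an anti-algebra involution of $H_0$ since $H_0\subset\mathbb{H}$ is a $*$-stable subalgebra. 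One could instead compute $C_{(s_i,e)}$ by hand using $\tau(s_i,e)(h)=d_{s_i}(x)^{-1}\widetilde{S}_ih$, together with $d_{s_i}=d_i$ and $\widetilde{S}_i=d_i(X)c_i(X)^{-1}\bigl(T_i-b_i(X)\bigr)$ (from the definition of $\widetilde{S}_i$ and Theorem \ref{CherednikB}), so that the factors $d_{s_i}(x)^{-1}$ and $d_i(X)$ cancel after moving $d_i(X)\in\mathbb{L}$ across $\otimes_{\mathbb{L}}$; I expect the genuine difficulty here to be only the bookkeeping of the $\mathbb{W}$-semilinear actions and of the identifications $H_0^{\mathbb{K}}\simeq\mathbb{K}\otimes_{\mathbb{L}}\mathbb{H}$ and $M_\zeta\simeq H_0$, which is precisely why I prefer to route the argument through Remark \ref{remtauxVStau}.

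For part (ii) the shortest path is a short direct computation. Setting $A:=\eta(T_i)(\gamma)$, the Hecke relation $(T_i-k_i)(T_i+k_i^{-1})=0$ gives $A^2-(k_i-k_i^{-1})A=1$, and clearing denominators yields the elementary identities $b(z;\kappa)+b(z^{-1};\kappa)=\kappa-\kappa^{-1}$ and $c(z;\kappa)c(z^{-1};\kappa)=1+b(z;\kappa)b(z^{-1};\kappa)$; combining these,
\[
R_i(z;\gamma)R_i(z^{-1};\gamma)=\frac{A^2-(k_i-k_i^{-1})A+b(z;k_i)b(z^{-1};k_i)}{c(z;k_i)c(z^{-1};k_i)}=\frac{1+b(z;k_i)b(z^{-1};k_i)}{c(z;k_i)c(z^{-1};k_i)}\,\textup{id}=\textup{id}.
\]
Alternatively, part (ii) is forced by part (i) and the cocycle identity of Corollary \ref{CorCocB}: from $(s_i,e)^2=(e,e)$ one gets $C_{(s_i,e)}(t,\gamma)\,C_{(s_i,e)}(s_it,\gamma)=C_{(e,e)}(t,\gamma)=\textup{id}$, and since the reflection $s_{a_i}$ on $\widehat{V}$ negates $a_i^\vee$ one has $(s_it)^{a_i^\vee}=t^{-a_i^\vee}$, so with $z=t^{a_i^\vee}$ this reads $R_i(z;\gamma)R_i(z^{-1};\gamma)=\textup{id}$ on a Zariski-dense set and hence identically, by rationality of $R_i$ in $z$ and $\gamma$.
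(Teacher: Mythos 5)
Your proof is correct and sound. The paper itself omits a written argument for this lemma (it is presumably regarded as a direct analogue of the $\textup{GL}_N$ case in \cite{MS}), so there is no authorial proof to compare against, but your route is precisely the expected one: you unwind $C_\mathrm{w}=\tau(\mathrm{w})\mathrm{w}^{-1}$, and for $C_{(s_i,e)}$, $C_{(\omega,e)}$ you exploit Remark~\ref{remtauxVStau} to reduce to reading off $F^{M_\zeta}_{s_i}$ and $F^{M_\zeta}_\omega$ from the explicit formulas for $\tau_x$ and identifying $\rho_{M_\zeta}(h)=\eta(h)(\zeta)$, which immediately matches \eqref{RiB}; for $C_\iota$ you correctly note that $\tau(\iota)=\sigma_\iota$ restricts to Cherednik's duality $*$ on $\mathbb{H}$ and that $T_w^*=T_{w^{-1}}$ for $w\in W_0$, with the $\mathbb{K}$-linearity of $C_\iota$ supplying the extension; your sketch of the hands-on computation via $\widetilde{S}_i=d_i(X)c_i(X)^{-1}(T_i-b_i(X))$ is also consistent. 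Both of your arguments for part (ii) are valid: the direct computation rests on the correct identities $b(z;\kappa)+b(z^{-1};\kappa)=\kappa-\kappa^{-1}$ and $c(z;\kappa)c(z^{-1};\kappa)=1+b(z;\kappa)b(z^{-1};\kappa)$ together with the Hecke relation $A^2-(k_i-k_i^{-1})A=\textup{id}$, while the cocycle argument using $(s_i,e)^2=(e,e)$ and $(s_it)^{a_i^\vee}=t^{-a_i^\vee}$ (which holds for all $0\leq i\leq N$, including $i=0$ by the convention $t^{\lambda+rc}=q^rt^\lambda$) gives a slicker alternative; your closing remark about density is slightly more cautious than needed since $t\mapsto t^{a_i^\vee}$ already surjects onto $\C^\times$.
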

\begin{rema}
Note that
\[C_{(e,w)}(t,\gamma)=C_\iota C_{(w,e)}(\gamma^{-1},t^{-1})C_\iota,\qquad
w\in W,\]
so part (i) of the previous lemma uniquely determines $C_{\mathrm{w}}$
for all $\mathrm{w}\in\mathbb{W}$.
\end{rema}

\section{Solutions of BqKZ}\label{SectionSolB}

The main result of this section is the construction of a
particular meromorphic solution $\Phi$ of BqKZ called the basic
asymptotically free solution. The idea is as follows. We first
look for $v\in H_0$ and $G\in\mathbb{K}$ such that $Gv$ will be
the leading term of a solution of BqKZ in some asymptotic region.
These are obtained by looking for a solution of an asymptotic
version of BqKZ, that is, BqKZ in which the $q$-connection
matrices are replaced by their limit values in the asymptotic
region.

Next, we gauge BqKZ by $G$ and look for a power series solution
$\Psi$ of the gauged BqKZ equation converging deep inside the
asymptotic region and which has constant term $v$. By meromorphic
continuation $\Psi$ can be extended to a meromorphic solution of
the gauged BqKZ equation yielding the desired solution
$\Phi=G\Psi\in H_0^\mathbb{K}$ of BqKZ. Apart from the
construction itself we will derive various properties of $\Phi$
and give an explicit $\mathbb{F}$-basis of $\textup{SOL}$, but we
start with the computation of the leading term.

\subsection{The leading term}
In order to find these $v$ and $G$, we first need to compute the
asymptotic leading terms of the $q$-connection matrices
$C_{(\lambda,e)}(t,\gamma)$ ($\lambda\in P^\vee$) as
$|t^{-\alpha^\vee_i}|\rightarrow 0$ ($1\leq i\leq N$).

We define the subring
$\mathcal{A}:=\C[x^{-\alpha^\vee_1},\ldots,x^{-\alpha^\vee_{N}}]$
of $\C[T\times\{1\}]=\C[x_1^{\pm 1},\ldots,x_N^{\pm
1}]\subset\C[T\times T]$. Let $Q(\mathcal{A})$ denote its quotient
field and write $Q_0(\mathcal{A})$ for the subring of
$Q(\mathcal{A})$ consisting of rational functions which are
regular at the point $x^{-\alpha^\vee_i}=0$ ($1\leq i\leq N$). We
consider $Q_0(\mathcal{A})\otimes\C[T]$ as subring of $\C(T\times
T)$ in the natural way.
\begin{lem}\label{asymptoticB}
Let $\lambda\in P^\vee$. We have
\begin{equation}\label{convokB}
C_{(\lambda,e)}\in (Q_0(\mathcal{A})\otimes \C[T])\otimes
\textup{End}(H_0).
\end{equation}
If we write $C_{(\lambda,e)}^{(0)}=
C_{(\lambda,e)}|_{x^{-\alpha^\vee_1}=0,\ldots,x^{-\alpha^\vee_{N}}=0}
\in\C[T]\otimes\textup{End}(H_0)$, we have
\begin{equation}\label{C0lambdaB}
C_{(\lambda,e)}^{(0)}=\delta_{\underline{k}}^\lambda
\eta(T_{w_0}Y^{w_0(\lambda)}T_{w_0}^{-1}).\end{equation}
\end{lem}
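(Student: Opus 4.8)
The plan is to reduce everything to the behaviour of the cocycle on simple reflections, which is known explicitly from Lemma \ref{YBlemB}, and then propagate through a reduced expression for $\textup{t}(\lambda)$. First I would treat the case $\lambda\in P_+^\vee$ dominant. For such $\lambda$ one has a reduced expression $\textup{t}(\lambda)=s_{i_1}\cdots s_{i_r}\omega$ with $\omega\in\Omega$ and $r=\ell(\textup{t}(\lambda))$; by the cocycle property of $C$ (Corollary \ref{CorCocB}, restricted to $W\times\{e\}\subset\mathbb{W}$) we get
\[
C_{(\lambda,e)}(t,\gamma)=C_{(s_{i_1},e)}(t,\gamma)\,C_{(s_{i_2},e)}(s_{i_1}^{-1}t,\gamma)\cdots
C_{(\omega,e)}\bigl((s_{i_1}\cdots s_{i_r})^{-1}t,\gamma\bigr).
\]
Each factor is, by Lemma \ref{YBlemB}(i), of the form $R_{i_k}\bigl((s_{i_1}\cdots s_{i_{k-1}})^{-1}t)^{a_{i_k}^\vee};\gamma\bigr)$ (or $\eta(\omega)$ for the last one). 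The key observation is that, because $\textup{t}(\lambda)=s_{i_1}\cdots s_{i_r}\omega$ is reduced, each affine root $(s_{i_1}\cdots s_{i_{k-1}})(a_{i_k})$ lies in $S(\textup{t}(\lambda))\subset S_+$, and by the description $S(\textup{t}(\lambda))=\{\alpha+rc\mid \alpha\in R_+,\ 0\le r<\langle\lambda,\alpha\rangle\}$ from the proof of \eqref{klambdaB}, it has strictly positive $c$-coefficient (recall $\lambda$ dominant forces $r\ge 0$, and genuine affine roots appear). Hence the monomial $\bigl((s_{i_1}\cdots s_{i_{k-1}})^{-1}t\bigr)^{a_{i_k}^\vee}$, when re-expressed in the coordinates $t^{-\alpha_i^\vee}$, is a product of a positive power of $q$ with a monomial in the $x^{-\alpha_i^\vee}$ having \emph{nonnegative} exponents; thus each such monomial lies in $\mathcal{A}$ and vanishes at $x^{-\alpha_1^\vee}=\cdots=x^{-\alpha_N^\vee}=0$. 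Since $R_i(z;\gamma)=c(z;k_i)^{-1}\bigl(\eta(T_i)(\gamma)-b(z;k_i)\bigr)$ is regular at $z=0$ with value $R_i(0;\gamma)=k_i\,\eta(T_i)(\gamma)$ (using $c(0;k_i)=k_i^{-1}$, $b(0;k_i)=k_i-k_i^{-1}$), the product lands in $(Q_0(\mathcal{A})\otimes\C[T])\otimes\textup{End}(H_0)$ and its value at the origin is
\[
C_{(\lambda,e)}^{(0)}=\Bigl(\prod_{k=1}^{r}k_{i_k}\Bigr)\,\eta(T_{i_1})\cdots\eta(T_{i_r})\,\eta(\omega)=k(\textup{t}(\lambda))\,\eta\bigl(T_{i_1}\cdots T_{i_r}\omega\bigr)=\delta_{\underline{k}}^{\lambda}\,\eta(Y^\lambda),
\]
using \eqref{klambdaB} and $Y^\lambda=T_{\textup{t}(\lambda)}=T_{i_1}\cdots T_{i_r}\omega$. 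Finally one must rewrite $\eta(Y^\lambda)$ as $\eta(T_{w_0}Y^{w_0(\lambda)}T_{w_0}^{-1})$; but for $\lambda$ dominant $w_0(\lambda)=\lambda_-$ is antidominant and $T_{w_0}Y^{\mu}T_{w_0}^{-1}=Y^{w_0(\mu)}$ in $H$ (a standard identity, or directly: $T_{w_0}^{-1}Y^\lambda T_{w_0}$ is the image of $Y^{w_0^{-1}\lambda}$ under the $w_0$-twist of the Bernstein presentation, see \cite[\S4.6]{M}), so $\eta(T_{w_0}Y^{w_0(\lambda)}T_{w_0}^{-1})=\eta(Y^\lambda)$ as claimed.

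For general $\lambda\in P^\vee$ write $\lambda=\mu-\nu$ with $\mu,\nu\in P_+^\vee$, so $\textup{t}(\lambda)=\textup{t}(\mu)\textup{t}(\nu)^{-1}$ and, by the cocycle identity, $C_{(\lambda,e)}(t,\gamma)=C_{(\mu,e)}(t,\gamma)\,C_{(\nu,e)}\bigl(q^{\mu}t,\gamma\bigr)^{-1}$ — more precisely $C_{(\nu,e)}^{-1}$ evaluated with the shift by $\textup{t}(\mu)$ on the torus. The translation by $q^\mu$ sends $x^{-\alpha_i^\vee}\mapsto q^{-\langle\mu,\alpha_i^\vee\rangle}x^{-\alpha_i^\vee}$, which is again in $\mathcal{A}$ and still vanishes at the origin, and one checks that $C_{(\nu,e)}$ is \emph{invertible} over $Q_0(\mathcal{A})\otimes\C[T]$: its value at the origin is $\delta_{\underline{k}}^{\nu}\eta(Y^\nu)\in\textup{GL}(H_0)$ (invertible since $\eta(Y^\nu)$ is, being the image of a unit of $H$), hence the determinant of $C_{(\nu,e)}$ does not vanish identically at $x^{-\alpha_i^\vee}=0$, so $C_{(\nu,e)}^{-1}\in (Q_0(\mathcal{A})\otimes\C[T])\otimes\textup{End}(H_0)$ as well. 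Taking the product and specializing at the origin gives $C_{(\lambda,e)}^{(0)}=\delta_{\underline{k}}^{\mu}\eta(Y^\mu)\,\bigl(\delta_{\underline{k}}^{\nu}\eta(Y^\nu)\bigr)^{-1}=\delta_{\underline{k}}^{\mu-\nu}\eta(Y^{\mu-\nu})=\delta_{\underline{k}}^{\lambda}\eta(Y^\lambda)$, and then one rewrites $\eta(Y^\lambda)=\eta(T_{w_0}Y^{w_0(\lambda)}T_{w_0}^{-1})$ exactly as above (the identity $T_{w_0}Y^\mu T_{w_0}^{-1}=Y^{w_0(\mu)}$ holds for all $\mu\in P^\vee$, not just dominant ones). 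Independence of the decomposition $\lambda=\mu-\nu$ is automatic since the final answer only depends on $\lambda$.

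The main obstacle I anticipate is the bookkeeping in the dominant case: proving that \emph{every} affine root $(s_{i_1}\cdots s_{i_{k-1}})(a_{i_k})$ occurring along a reduced word for $\textup{t}(\lambda)$ genuinely has positive $c$-coefficient (equivalently, that $\langle(s_{i_1}\cdots s_{i_{k-1}})(a_{i_k}),0\rangle$, i.e. the constant term, is $\ge 1$), so that the relevant monomials really sit in $\mathcal{A}$ with nonnegative exponents and vanish at the origin — this is where the explicit description of $S(\textup{t}(\lambda))$ from the proof of \eqref{klambdaB} and the standard fact that $\{(s_{i_1}\cdots s_{i_{k-1}})(a_{i_k})\}_{k=1}^{r}$ enumerates $S(\textup{t}(\lambda))$ are essential, and one has to be careful that the absence (for $\textup{GL}_N$ one would use a Dynkin automorphism shortcut that is unavailable here) of a convenient presentation forces a direct combinatorial argument with the affine Weyl group. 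A secondary, minor point is to record cleanly the two auxiliary identities $R_i(0;\gamma)=k_i\eta(T_i)(\gamma)$ and $T_{w_0}Y^\mu T_{w_0}^{-1}=Y^{w_0(\mu)}$, both of which are routine but needed to match the stated normalization $T_{w_0}Y^{w_0(\lambda)}T_{w_0}^{-1}$ on the right-hand side of \eqref{C0lambdaB}.
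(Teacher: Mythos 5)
There is a genuine gap, and it sits precisely at the point you flagged as a ``minor'' auxiliary identity. You write that $R_i(0;\gamma)=k_i\,\eta(T_i)(\gamma)$. This is not correct: since $c(0;k_i)^{-1}=k_i$ and $b(0;k_i)=k_i-k_i^{-1}$, one gets
\[
R_i(0;\gamma)=k_i\bigl(\eta(T_i)(\gamma)-k_i+k_i^{-1}\bigr)=k_i\,\eta(T_i^{-1})(\gamma),
\]
using the quadratic relation $T_i^{-1}=T_i-k_i+k_i^{-1}$. The value $k_i^{-1}\eta(T_i)(\gamma)$ is what you get at the \emph{other} corner $z=\infty$, and that is the limit the paper actually uses. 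Your second ``routine'' identity, $T_{w_0}Y^\mu T_{w_0}^{-1}=Y^{w_0(\mu)}$ in $H$, is false. Already for $A_1$, Lusztig's relation gives $T^{-1}YT=Y^{-1}T^2\neq Y^{-1}$. What the paper proves later (for Lemma \ref{lemrhodualityB}) is $(Y^\mu)^\circ=T_{w_0}Y^{w_0(\mu)}T_{w_0}^{-1}$, where ${}^\circ\colon H(\underline{k}^{-1})\to H(\underline{k})$ sends $T_i\mapsto T_i^{-1}$; this intertwines two \emph{different} Hecke algebras, it is not an inner identity in $H$. Your two errors cancel to produce the advertised right-hand side, which is why the final formula looks right, but the argument is unsound.

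The paper's route avoids both pitfalls. It first sends $|t^{\alpha_i^\vee}|\to 0$, where the arguments $t^{b^\vee}$ ($b\in S(\textup{t}(-\lambda))$, all with negative gradient and $c$-coefficient $\ge 1$) tend to $\infty$, and uses $\lim_{z\to\infty}R_i(z;\gamma)=k_i^{-1}\eta(T_i)(\gamma)$ to get the intermediate value $\delta_{\underline{k}}^{-\lambda}\eta(Y^\lambda)(\gamma)$. It then writes
\[
C_{(\lambda,e)}(t,\gamma)=C_{(w_0,e)}(t,\gamma)\,C_{(w_0(\lambda),e)}(w_0 t,\gamma)\,C_{(w_0,e)}(q^{-w_0(\lambda)}w_0 t,\gamma)
\]
via the cocycle identity and sends $|t^{-\alpha_i^\vee}|\to 0$, noting $C_{(w_0,e)}\to k(w_0)^{-1}\eta(T_{w_0})$ there, which produces $\delta_{\underline{k}}^{\lambda}\eta(T_{w_0}Y^{w_0(\lambda)}T_{w_0}^{-1})$ directly. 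If you want to stay with a direct evaluation at $z=0$ as in your sketch, you would instead have to prove, with no shortcut available, that $T_{i_1}^{-1}\cdots T_{i_r}^{-1}\omega=T_{w_0}Y^{w_0(\lambda)}T_{w_0}^{-1}$ for a reduced expression $\textup{t}(\lambda)=s_{i_1}\cdots s_{i_r}\omega$ — this is true but is exactly the kind of Hecke-algebra bookkeeping the paper's conjugation trick is designed to avoid.

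Two smaller remarks. You place the affine roots $(s_{i_1}\cdots s_{i_{k-1}})(a_{i_k})$ in $S(\textup{t}(\lambda))$, but they in fact enumerate $S(u^{-1})=S(\textup{t}(-\lambda))$ (cf.\ \cite[(2.2.9),(2.2.5)]{M}); your subsequent description (negative gradient, $c$-coefficient $\ge 1$) matches $S(\textup{t}(-\lambda))$, so this is a mislabelling rather than a computational error, but note $S(\textup{t}(\lambda))$ does contain elements with $c$-coefficient $0$. Finally, for the reduction to general $\lambda$, inverting $C_{(\nu,e)}$ over $Q_0(\mathcal{A})\otimes\C[T]$ is delicate: the determinant argument only shows the inverse lies in $Q_0(\mathcal{A})\otimes\C(T)\otimes\textup{End}(H_0)$, so extra care is needed to recover polynomial $\gamma$-dependence. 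The paper circumvents this by expanding $C_{(-\lambda,e)}$ directly along the reduced expression $\textup{t}(-\lambda)=\omega^{-1}s_{i_r}\cdots s_{i_1}$ (using regularity of $R_i$ at \emph{both} $z=0$ and $z=\infty$) and then using $C_{(\mu-\nu,e)}(t,\gamma)=C_{(\mu,e)}(t,\gamma)C_{(-\nu,e)}(q^{-\mu}t,\gamma)$, which involves no inversion.
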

\begin{proof}
First we consider $\lambda\in P_+^\vee$. Suppose we have a reduced
expression $\textup{t}(\lambda)=s_{i_1}\cdots s_{i_r}\omega$
($0\leq i_1,\ldots,i_r\leq N$, $\omega\in\Omega$). Then
\begin{equation}\label{ClambdaB}
C_{(\lambda,e)}(t,\gamma)=
R_{i_1}(t^{a^\vee_{i_1}};\gamma)R_{i_2}(t^{s_{i_1}(a^\vee_{i_2})};\gamma)\cdots
R_{i_r}(t^{s_{i_1}\cdots
s_{i_{r-1}}(a^\vee_{i_r})};\gamma)\eta(\omega)(\gamma).
\end{equation}
It follows that $C_{(\lambda,e)}\in(Q(\mathcal{A})\otimes
\C[T])\otimes \textup{End}(H_0)$. Expanding $C_{(-\lambda,e)}$
along the reduced expression
$\textup{t}(-\lambda)=\omega^{-1}s_{i_r}\cdots s_{i_1}$ gives an
expression similar to \eqref{ClambdaB}, from which we conclude
that also $C_{(-\lambda,e)}\in(Q(\mathcal{A})\otimes \C[T])\otimes
\textup{End}(H_0)$. Since the $R_i(z;\gamma)$ are analytic at
$z=0$ and $z=\infty$, we have
$C_{(\lambda,e)},C_{(-\lambda,e)}\in(Q_0(\mathcal{A})\otimes
\C[T])\otimes \textup{End}(H_0)$. Writing an arbitrary weight as
the difference of two dominant weights and using the cocycle
property we conclude \eqref{convokB} for any $\lambda\in P^\vee$.

To prove $\eqref{C0lambdaB}$ we will first compute the limit of
$C_{(\lambda,e)}(t,\gamma)$ as $|t^{\alpha^\vee_i}|\rightarrow 0$
for $1\leq i\leq N$ and then use this together with the cocycle
property to find $C_{(\lambda,e)}^{(0)}(\gamma)$, which is the
limit as $|t^{-\alpha^\vee_i}|\rightarrow 0$ ($1\leq i\leq N$).
Similarly as in the proof of \eqref{convokB}, it suffices to
consider only dominant weights. Assume we have $\lambda\in
P_+^\vee$ a reduced expression for $\textup{t}(\lambda)$ as above
and put $u=s_{i_1}\cdots s_{i_r}$. By formulas (2.2.9) and (2.2.5)
from \cite{M} we have
$\{a_{i_1},s_{i_1}(a_{i_2}),\ldots,s_{i_1}\cdots
s_{i_{r-1}}(a_{i_r})\}=S(u^{-1})=S(\omega^{-1}u^{-1})=S(\textup{t}(-\lambda))$.
Because $\lambda\in P^\vee_+$ we have
\[S(\textup{t}(-\lambda))=\{\alpha+mc\mid\alpha\in R_-,\:1\leq
m\leq -\langle\lambda,\alpha\rangle\}\] (cf. \cite[\S2.4]{M}), and
thus, since $w(a^\vee)=(wa)^\vee$ ($a\in S$, $w\in W$), we have
$|t^{b^\vee}|\to\infty$ ($b\in S(\textup{t}(-\lambda))$) as
$|t^{\alpha^\vee_i}|\rightarrow 0$ ($1\leq i\leq N$). Observe that
$\lim_{z\rightarrow\infty}R_i(z;\gamma)=k^{-1}_i\eta(T_i)(\gamma)$
for $0\leq i\leq N$. It follows that
\[C_{(\lambda,e)}(t,\gamma)\rightarrow k^{-1}_{i_1}\cdots k^{-1}_{i_r}
\eta(Y^{\lambda})(\gamma)=k(\textup{t}(\lambda))^{-1}\eta(Y^{\lambda})(\gamma)
\]
as $|t^{\alpha^\vee_i}|\rightarrow 0$ for all $1\leq i\leq N$.
More generally, we conclude that
\begin{equation}\label{ClambdaTussenB}
C_{(\lambda,e)}(t,\gamma)\rightarrow
\delta_{\underline{k}}^{-\lambda}\eta(Y^{\lambda})(\gamma),\qquad
\lambda\in P^\vee
\end{equation}
as $|t^{\alpha^\vee_i}|\rightarrow 0$ for all $1\leq i\leq N$. In
order to find $C_{(\lambda,e)}^{(0)}$ we use the cocycle property
to write
\[C_{(\lambda,e)}(t,\gamma)=C_{(w_0,e)}(t,\gamma)
C_{(w_0(\lambda),e)}(w_0t,\gamma)C_{(w_0,e)}(q^{-w_0(\lambda)}w_0t,\gamma)
\]
and consider the limit as $|t^{-\alpha^\vee_i}|\rightarrow 0$ for
$1\leq i\leq N$. Note that $C_{(w_0,e)}(t,\gamma)\rightarrow
k(w_0)^{-1}\eta(T_{w_0})$ as $|t^{-\alpha^\vee_i}|\rightarrow 0$
for $1\leq i\leq N$. Hence, using \eqref{ClambdaTussenB},
\[C_{(\lambda,e)}^{(0)}=\delta_{\underline{k}}^{-w_0(\lambda)}\eta(T_{w_0}
Y^{w_0(\lambda)}T_{w_0}^{-1})=\delta_{\underline{k}}^{\lambda}\eta(T_{w_0}
Y^{w_0(\lambda)}T_{w_0}^{-1}),
\]
where the last equality follows from \eqref{klambdaB}.
\end{proof}

The previous lemma implies that the asymptotic form of the quantum
KZ equations
\[C_{(\lambda,e)}(t,\gamma)f(q^{-\lambda}t,\gamma)=f(t,\gamma),\qquad
\lambda\in P^\vee
\]
in the asymptotic region $|t^{\alpha_i^\vee}|\gg0$ ($1\leq i\leq
N$) is
\begin{equation}\label{qKZasymptoticB}
\delta_{\underline{k}}^{\lambda}\eta(T_{w_0}Y^{w_0(\lambda)}T_{w_0}^{-1})
(\gamma)f(q^{-\lambda}t,\gamma)=f(t,\gamma),\qquad \lambda\in
P^\vee.
\end{equation}
Let $\theta_q\in\mathcal{O}(T)$ denote the theta function
associated to the root system $R$ (see \cite{Lo}), defined by
\begin{equation}
\theta_q(t):=\sum_{\lambda\in
P^\vee}q^{\frac{1}{2}\langle\lambda,\lambda\rangle}t^\lambda,
\end{equation}
for $t\in T$. Note that $\theta_q$ is invariant under the action of
$W_0$ on $\mathcal{O}(T)$. Furthermore, it satisfies
$\theta_q(t^{-1})=\theta_q(t)$ and
\begin{equation}\label{thetarelB}
\theta_q(q^\mu t)
=q^{-\frac{1}{2}\langle\mu,\mu\rangle}t^{-\mu}\theta_q(t),
\end{equation}
for all $\mu\in P^\vee$.

Let $G\in\mathbb{K}$ be given by
\begin{equation}\label{WB}
G(t,\gamma):=\frac{\theta_{q}(tw_0(\gamma)^{-1})}{\theta_{q}
(\delta_{\underline{k}}
t)\theta_{q}(\delta_{\underline{k}}^{-1}w_0(\gamma)^{-1})}.
\end{equation}

\begin{prop}\label{GpropB} We have:\\
\textbf{\textup{(i)}} $\iota(G)=G$.\\
\textbf{\textup{(ii)}} $G(t,\gamma)$ satisfies the $q$-difference
equations
\begin{equation}\label{WeqB}
G(q^{-\lambda}t,q^\mu\gamma)=\delta_{\underline{k}}^{-\lambda-\mu}
q^{-\langle w_0(\lambda),\mu\rangle}t^{w_0(\mu)}
\gamma^{-w_0(\lambda)}G(t,\gamma)
\end{equation}
for $\lambda,\mu\in P^\vee$.\\
\textbf{\textup{(iii)}} $f^{(0)}(t,\gamma):=G(t,\gamma)T_{w_0}$ is
a solution of \eqref{qKZasymptoticB} and
$\tau(\iota)f^{(0)}=f^{(0)}$.
\end{prop}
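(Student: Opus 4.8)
The plan is to verify the three parts by direct computation, using only the quasi-periodicity $\eqref{thetarelB}$ and the symmetries $\theta_q(s^{-1})=\theta_q(s)$ and $W_0$-invariance of $\theta_q$, together with one auxiliary identity: $w_0\delta_{\underline{k}}=\delta_{\underline{k}}^{-1}$, equivalently $\delta_{\underline{k}}^{w_0\mu}=\delta_{\underline{k}}^{-\mu}$ for all $\mu\in P^\vee$. I would first record this identity: by $\eqref{klambdaB}$, $\delta_{\underline{k}}^{w_0\mu}=\prod_{\alpha\in R_+}k_\alpha^{\langle\mu,w_0\alpha\rangle}$, and since $w_0R_+=R_-$ and $\underline{k}$ is $W$-invariant (so $k_{w_0\alpha}=k_\alpha$ and $k_{-\alpha}=k_\alpha$), the product equals $\prod_{\alpha\in R_+}k_\alpha^{-\langle\mu,\alpha\rangle}=\delta_{\underline{k}}^{-\mu}$. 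For part (i), since $\iota$ is an involution, $(\iota G)(t,\gamma)=G(\gamma^{-1},t^{-1})$; expanding the right-hand side of $\eqref{WB}$ at $(\gamma^{-1},t^{-1})$ and using $w_0(t^{-1})^{-1}=w_0(t)$, the numerator $\theta_q(\gamma^{-1}w_0(t))$ becomes $\theta_q(tw_0(\gamma)^{-1})$ after applying inversion-invariance and then $W_0$-invariance (with $w_0$) to $\theta_q$, while the denominator factors $\theta_q(\delta_{\underline{k}}\gamma^{-1})$ and $\theta_q(\delta_{\underline{k}}^{-1}w_0(t))$ turn into $\theta_q(\delta_{\underline{k}}^{-1}w_0(\gamma)^{-1})$ and $\theta_q(\delta_{\underline{k}} t)$ respectively, using in addition $w_0\delta_{\underline{k}}=\delta_{\underline{k}}^{-1}$. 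Hence $G(\gamma^{-1},t^{-1})=G(t,\gamma)$.

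For part (ii), I would substitute $(q^{-\lambda}t,q^\mu\gamma)$ into each of the three theta factors in $\eqref{WB}$. Using $w_0(q^\mu\gamma)^{-1}=q^{-w_0(\mu)}w_0(\gamma)^{-1}$, the three arguments become $q^{-\lambda-w_0(\mu)}\bigl(tw_0(\gamma)^{-1}\bigr)$, $q^{-\lambda}(\delta_{\underline{k}} t)$, and $q^{-w_0(\mu)}\bigl(\delta_{\underline{k}}^{-1}w_0(\gamma)^{-1}\bigr)$. Applying $\eqref{thetarelB}$ to each and collecting prefactors, the $q$-exponent simplifies via $\langle\lambda+w_0(\mu),\lambda+w_0(\mu)\rangle=\langle\lambda,\lambda\rangle+2\langle w_0(\lambda),\mu\rangle+\langle\mu,\mu\rangle$ (using $W_0$-invariance of $\langle\,,\,\rangle$) to $q^{-\langle w_0(\lambda),\mu\rangle}$, while the monomial prefactor, after cancelling the $t^\lambda$ and $w_0(\gamma)^{-w_0(\mu)}$ contributions, reduces to $t^{w_0(\mu)}w_0(\gamma)^{-\lambda}\delta_{\underline{k}}^{-\lambda+w_0(\mu)}$; rewriting $w_0(\gamma)^{-\lambda}=\gamma^{-w_0(\lambda)}$ and $\delta_{\underline{k}}^{w_0(\mu)}=\delta_{\underline{k}}^{-\mu}$ yields exactly the right-hand side of $\eqref{WeqB}$.

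For part (iii), I would plug $f^{(0)}(t,\gamma)=G(t,\gamma)T_{w_0}$ into $\eqref{qKZasymptoticB}$; the scalar $G(q^{-\lambda}t,\gamma)$ passes through $\eta$, and by (ii) with $\mu=e$ it equals $\delta_{\underline{k}}^{-\lambda}\gamma^{-w_0(\lambda)}G(t,\gamma)$, so it suffices to show $\eta(T_{w_0}Y^{w_0(\lambda)}T_{w_0}^{-1})(\gamma)T_{w_0}=\gamma^{w_0(\lambda)}T_{w_0}$. From $\eqref{formulaTIB}$ one gets $\eta(T_w)T_e=T_w$ for $w\in W_0$ by induction on $\ell(w)$, hence $\eta(T_{w_0}^{-1})T_{w_0}=\eta(T_{w_0}^{-1}T_{w_0})T_e=T_e$; then $\eqref{formulaPYB}$ gives $\eta(Y^{w_0(\lambda)})(\gamma)T_e=\gamma^{w_0(\lambda)}T_e$, and finally $\eta(T_{w_0})(\gamma^{w_0(\lambda)}T_e)=\gamma^{w_0(\lambda)}T_{w_0}$. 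Then the two factors $\delta_{\underline{k}}^{\pm\lambda}$ and $\gamma^{\pm w_0(\lambda)}$ cancel and one recovers $f^{(0)}(t,\gamma)$. For $\tau(\iota)f^{(0)}=f^{(0)}$ I would use $\eqref{actionexplB}$: $(\tau(\iota)f^{(0)})(t,\gamma)=C_\iota\bigl(f^{(0)}(\gamma^{-1},t^{-1})\bigr)$, and since by Lemma \ref{YBlemB}(i) the map $C_\iota$ is $\mathbb{K}$-linear with $C_\iota(T_{w_0})=T_{w_0^{-1}}=T_{w_0}$, this equals $G(\gamma^{-1},t^{-1})T_{w_0}=G(t,\gamma)T_{w_0}=f^{(0)}(t,\gamma)$ by part (i).

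There is no conceptual obstacle here; the only genuine work is the bookkeeping in part (ii), in particular making sure that $w_0$ interacts correctly with the torus translations $q^\mu$ and with inversion, and that the identity $\delta_{\underline{k}}^{w_0(\mu)}=\delta_{\underline{k}}^{-\mu}$ is applied in the right places so that all the stray monomial factors cancel.
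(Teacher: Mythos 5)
Your proof is correct and matches the paper's approach in spirit — the paper's own proof is just far terser (``By construction we have (i)\dots From \eqref{thetarelB} it follows\dots this suffices\dots (iii) easily follows''). The only difference worth noting is in (ii): you run the full two-variable computation by applying \eqref{thetarelB} to all three theta factors at once, whereas the paper first derives the one-variable relation $G(q^{-\lambda}t,\gamma)=\delta_{\underline{k}}^{-\lambda}\gamma^{-w_0(\lambda)}G(t,\gamma)$ and then invokes (i) to transport it to the second variable, which cuts the bookkeeping in half; both routes rest on the same identity $w_0\delta_{\underline{k}}=\delta_{\underline{k}}^{-1}$ that you correctly isolated and proved. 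Your verification of (iii) via $\eta(T_w)T_e=T_w$, \eqref{formulaPYB}, and the constancy of $C_\iota$ from Lemma \ref{YBlemB} is exactly the intended filling-in of ``easily follows.''
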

\begin{proof}
By construction we have (i). From \eqref{thetarelB} it follows
that $G$ satisfies $G(q^{-\lambda} t,\gamma)=\delta_{\underline{k}}^{-\lambda}
\gamma^{-w_0(\lambda)}G(t,\gamma)$ for all $\lambda\in P^\vee$. In
view of (i) this suffices to prove (ii). (iii) easily follows from
(i) and (ii).
\end{proof}

\subsection{The basic asymptotically free solution $\Phi$}
As indicated in the introduction of this section we are now going
to gauge BqKZ by $G$. We obtain the gauged $q$-connection matrices
\begin{equation}\label{DlmB}
\begin{split}
D_{(\lambda,\mu)}(t,\gamma)&=G(t,\gamma)^{-1}C_{(\lambda,\mu)}
(t,\gamma)G(q^{-\lambda}t,q^{\mu}\gamma)\\
&=\delta_{\underline{k}}^{-\lambda-\mu}q^{-\langle\mu,w_0(\lambda)\rangle}
\gamma^{-w_0(\lambda)} t^{w_0(\mu)}C_{(\lambda,\mu)}(t,\gamma),
\end{split}
\end{equation}
for $\lambda,\mu\in P^\vee$. It is clear that for $f\in
H_0^\mathbb{K}$ we have $f\in\textup{SOL}$ if and only if
$g:=G^{-1}f\in H_0^\mathbb{K}$ satisfies the holonomic system of
$q$-difference equations
\begin{equation}\label{gaugedeqnB}
D_{(\lambda,\mu)}(t,\gamma)g(q^{-\lambda}t,q^{\mu}\gamma)=g(t,\gamma),\qquad
\lambda,\mu\in P^\vee
\end{equation}
as $H_0$-valued meromorhic functions in $(t,\gamma)\in T\times T$.

We write $\mathcal{B}$ for the analogue of $\mathcal{A}$
corresponding to the second copy of $T$ in $T\times T$. That is,
$\mathcal{B}$ is the subring
$\mathcal{B}:=\C[y^{\alpha^\vee_1},\ldots,y^{\alpha^\vee_{N}}]$ of
$\C[\{1\}\times T]=\C[y_1^{\pm 1},\ldots,y_N^{\pm 1}]$. Similarly,
we write $Q(\mathcal{B})$ for its quotient field and
$Q_0(\mathcal{B})$ for the subring of $Q(\mathcal{B})$ consisting
of rational functions which are regular at the point
$y^{\alpha^\vee_j}=0$ ($1\leq j\leq N$). We consider
$Q_0(\mathcal{A})\otimes\mathcal{B}$ and $\mathcal{A}\otimes
Q_0(\mathcal{B})$ as subrings of $\C(T\times T)$ in the natural
way.

In the proof of the lemma below, we will need a partial order
$\succeq$ on $P^\vee$. First recall the dominance partial order
$\geq$ on $P_+^\vee$, which is defined by
\[
\lambda\geq\mu \Longleftrightarrow \lambda-\mu\in Q_+^\vee,
\]
for $\lambda,\mu\in P_+^\vee$. We can extend this to a partial
order on $P^\vee$ as follows. For $\lambda\in P^\vee$ write
$\lambda_+$ for the unique dominant coweight in the orbit
$W_0\lambda$ and let $\overline{v}_\lambda$ be the shortest $w\in
W_0$ such that $w(\lambda_+)=\lambda$. For $\lambda,\mu\in P^\vee$
we say that $\lambda\succeq\mu$ if either

(i) $\lambda_+>\mu_+$, or

(ii) $\lambda_+=\mu_+$ and $\overline{v}_\lambda\geq\overline{v}_\mu$
(in the Bruhat order).\\
Note that with respect to this order, the anti-dominant coweight
$w_0(\lambda_+)$ is the largest element in the orbit $W_0\lambda$.
More details can be found in \cite[\S2.7]{M}.

The following lemma describes the asymptotic behavior of the
gauged $q$-connection matrices. It allows us to put them in the
context of the general theory of solutions of $q$-difference
equations as described in the appendix of \cite{MS} and is
therefore a key ingredient in the construction of $\Phi$.

\begin{lem}\label{qconnLemB}
Set $A_i=D_{(\varpi^\vee_i,e)}$ and $B_i=D_{(e,\varpi^\vee_i)}$ for
$1\leq
i\leq N$.\\
{\bf (i)} $A_i\in(Q_0(\mathcal{A})\otimes
\mathcal{B})\otimes\textup{End}(H_0)$ and $B_j\in
(\mathcal{A}\otimes Q_0(\mathcal{B}))\otimes \textup{End}(H_0)$.\\
{\bf (ii)} Write $A_i^{(0,0)}\in\textup{End}(H_0)$ and
$B_i^{(0,0)}\in\textup{End}(H_0)$ for the value of $A_i$ and $B_i$
at $x^{-\alpha^\vee_r}=0=y^{\alpha^\vee_s}$ ($1\leq r,s\leq N$).
For $w\in W_0$ we have
\begin{equation}\label{AasB}
A_i^{(0,0)}(T_{w_0}T_w)=
\begin{cases}
0\quad &\hbox{ if }\,\, w^{-1}w_0(\varpi^\vee_i)\not=w_0(\varpi^\vee_i),\\
T_{w_0}T_w\quad &\hbox{ if }\,\,
w^{-1}w_0(\varpi^\vee_i)=w_0(\varpi^\vee_i)
\end{cases}
\end{equation}
and
\begin{equation}\label{BasB}
B_i^{(0,0)}(T_{w_0}T_w)=
\begin{cases}
0\quad &\hbox{ if }\,\, w(\varpi^\vee_i)\not= \varpi^\vee_i,\\
T_{w_0}T_w\quad &\hbox{ if }\,\, w(\varpi^\vee_i)=\varpi^\vee_i.
\end{cases}
\end{equation}
\end{lem}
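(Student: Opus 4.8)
The plan is to reduce everything to one statement about the $\eta$-action of the lattice part of $H$ — a sharpening of the recursion behind Lemma~\ref{etaexplicitB} — and to deduce the $B_i$-statements from the $A_i$-statements via the duality $\iota$. For the reduction $B_i\rightsquigarrow A_i$: by the Remark following Lemma~\ref{YBlemB} we have $C_{(e,w)}(t,\gamma)=C_\iota\,C_{(w,e)}(\gamma^{-1},t^{-1})\,C_\iota$, and combining this with $\iota(G)=G$ (Proposition~\ref{GpropB}(i)), the $\mathbb K$-linearity of $C_\iota$, and \eqref{DlmB} gives $B_i(t,\gamma)=C_\iota\,A_i(\gamma^{-1},t^{-1})\,C_\iota$. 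The substitution $(t,\gamma)\mapsto(\gamma^{-1},t^{-1})$ interchanges $x^{-\alpha^\vee_j}$ with $y^{\alpha^\vee_j}$, hence carries $(Q_0(\mathcal A)\otimes\mathcal B)\otimes\textup{End}(H_0)$ onto $(\mathcal A\otimes Q_0(\mathcal B))\otimes\textup{End}(H_0)$ and identifies $B_i^{(0,0)}$ with $C_\iota A_i^{(0,0)}C_\iota$. Using that conjugation by $T_{w_0}$ on $H_0$ sends $T_v$ to $T_{v^*}$ with $v^*=w_0vw_0$, and that $w_0(\varpi^\vee_j)=-\varpi^\vee_{j^*}$, one computes $C_\iota(T_{w_0}T_w)=T_{w_0}T_{(w^*)^{-1}}$, so $C_\iota A_i^{(0,0)}C_\iota$ fixes $T_{w_0}T_w$ exactly when $(w^*)^{-1}\in\textup{Stab}_{W_0}(w_0(\varpi^\vee_i))$, equivalently $w(\varpi^\vee_i)=\varpi^\vee_i$, and kills it otherwise; so \eqref{BasB} follows from \eqref{AasB} together with part (i) for $A_i$, and it suffices to treat the $A_i$.

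For part (i) for $A_i$: by \eqref{DlmB}, $A_i(t,\gamma)=\delta_{\underline k}^{-\varpi^\vee_i}\gamma^{-w_0(\varpi^\vee_i)}C_{(\varpi^\vee_i,e)}(t,\gamma)$, so Lemma~\ref{asymptoticB} already places $A_i$ in $(Q_0(\mathcal A)\otimes\C[T])\otimes\textup{End}(H_0)$; it remains to see that the twist by $\gamma^{-w_0(\varpi^\vee_i)}$ brings the $\gamma$-part into $\mathcal B$. This is the $\gamma$-analogue of the bookkeeping proving Lemma~\ref{asymptoticB}: in the factorization \eqref{ClambdaB} the factors $R_j$ ($j\ge1$) are $\gamma$-independent while $\eta(T_0)(\gamma)$ and $\eta(\omega)(\gamma)$ are polynomial in $\gamma$ by Lemma~\ref{etaexplicitB}, so the set of $\gamma$-monomials occurring in $C_{(\varpi^\vee_i,e)}$ is independent of $t$; letting $|t^{\alpha^\vee_j}|\to0$ and using \eqref{ClambdaTussenB} it lies in $\Sigma(\varpi^\vee_i)$, and since $w_0(\varpi^\vee_i)$ is the $\preceq$-maximal (lowest) weight of $\Sigma(\varpi^\vee_i)$, every such $\gamma^\nu$ has $\nu-w_0(\varpi^\vee_i)\in Q^\vee_+$ (cf.\ \cite[\S2.6]{M}). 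Hence $A_i\in(Q_0(\mathcal A)\otimes\mathcal B)\otimes\textup{End}(H_0)$.

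For part (ii) for $A_i$: setting $x^{-\alpha^\vee_r}=0$ and inserting \eqref{C0lambdaB} (the factors $\delta_{\underline k}^{\pm\varpi^\vee_i}$ cancel) gives $A_i^{(0)}(\gamma):=A_i|_{x^{-\alpha^\vee_r}=0}=\gamma^{-w_0(\varpi^\vee_i)}\,\eta\big(T_{w_0}Y^{w_0(\varpi^\vee_i)}T_{w_0}^{-1}\big)(\gamma)$. Since $T_{w_0}$ is a unit in $H_0$, $\{T_{w_0}T_w\}_{w\in W_0}$ is a $\C$-basis, and as $\eta|_{H_0}$ is left multiplication this yields $A_i^{(0)}(\gamma)\,T_{w_0}T_w=\gamma^{-w_0(\varpi^\vee_i)}\,T_{w_0}\,\eta(Y^{w_0(\varpi^\vee_i)})(\gamma)\,T_w$. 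All $\gamma$-monomials of $\eta(Y^{w_0(\varpi^\vee_i)})(\gamma)T_w$ have weight in $w_0(\varpi^\vee_i)+Q^\vee_+$, so passing to $y^{\alpha^\vee_s}=0$ is the same as extracting the $\gamma^{w_0(\varpi^\vee_i)}$-coefficient; thus $A_i^{(0,0)}(T_{w_0}T_w)=T_{w_0}\cdot(\text{coefficient of }\gamma^{w_0(\varpi^\vee_i)}\text{ in }\eta(Y^{w_0(\varpi^\vee_i)})(\gamma)T_w)$. Therefore \eqref{AasB} is equivalent to the following Claim: for all $\mu\in P^\vee$ and $w\in W_0$, the coefficient of $\gamma^{w_0(\mu_+)}$ in $\eta(Y^\mu)(\gamma)T_w$ equals $T_w$ if $w^{-1}\mu=w_0(\mu_+)$ and equals $0$ otherwise (apply this with $\mu=w_0(\varpi^\vee_i)$, for which $\mu_+=\varpi^\vee_i$ and the condition $w^{-1}\mu=w_0(\mu_+)$ reads $w\in\textup{Stab}_{W_0}(w_0(\varpi^\vee_i))$).

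I would prove the Claim by induction on $\ell(w)$, the case $w=e$ being \eqref{formulaPYB}. For $w'=s_iw$ with $\ell(w')=\ell(w)+1$ one uses the recursion from the proof of Lemma~\ref{etaexplicitB}, $\eta(Y^\mu)(\gamma)T_{w'}=\eta(T_i)\eta(Y^{s_i\mu})(\gamma)T_w+(k_i-k_i^{-1})\,\eta\big((Y^\mu-Y^{s_i\mu})/(1-Y^{-\alpha^\vee_i})\big)(\gamma)T_w$, noting $(s_i\mu)_+=\mu_+$. The second summand is a combination of terms $\eta(Y^{\mu'})(\gamma)T_w$ with $\mu'$ on the segment between $\mu$ and $s_i\mu$; the $\mu'$ lying strictly inside the Weyl-orbit sphere have $\mu'_+<\mu_+$, so $w_0(\mu_+)\notin\Sigma(\mu'_+)$ and they contribute nothing to the $\gamma^{w_0(\mu_+)}$-coefficient, leaving at most one extra term — $\eta(Y^{s_i\mu})(\gamma)T_w$ when $\langle\mu,\alpha_i\rangle<0$, which merges with the first summand into $\eta(T_i^{-1})\eta(Y^{s_i\mu})(\gamma)T_w$ since $T_i-(k_i-k_i^{-1})=T_i^{-1}$, or $\eta(Y^\mu)(\gamma)T_w$ when $\langle\mu,\alpha_i\rangle>0$ (and no extra term when $\langle\mu,\alpha_i\rangle=0$). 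Feeding in the inductive hypothesis (which supplies a single diagonal term), together with $\eta(T_i)T_w=T_{w'}$ and $\eta(T_i^{-1})T_w=T_{w'}-(k_i-k_i^{-1})T_w$, produces the diagonal coefficient $\gamma^{w_0(\mu_+)}T_{w'}$ exactly when $(w')^{-1}\mu=w_0(\mu_+)$; what remains is to see that the spurious $(k_i-k_i^{-1})T_w$-term vanishes, and it does because it is multiplied by an indicator that is always $0$ in the relevant cases: if $w^{-1}\mu=w_0(\mu_+)$ then $\langle\mu,\alpha_i\rangle=\langle\mu_+,w_0w^{-1}\alpha_i\rangle$, and $\ell(s_iw)>\ell(w)$ forces $w^{-1}\alpha_i\in R_+$, so $w_0w^{-1}\alpha_i\in R_-$ and this pairing is $\le0$, contradicting $\langle\mu,\alpha_i\rangle>0$ — and the $\langle\mu,\alpha_i\rangle<0$ case is the same computation with $w'$ in place of $w$ (using that $s_i$ is a left descent of $w'$, so $(w')^{-1}\alpha_i\in R_-$). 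The main obstacle is exactly this last step: keeping the lowest weight $w_0(\varpi^\vee_i)$ from leaking into an off-diagonal entry. It forces one to use, in tandem, the $\preceq$-extremality of $w_0(\mu_+)$ inside the saturated set $\Sigma(\mu_+)$ (which suppresses the ``lowering'' terms of the recursion) and the precise matching between the Bruhat-length conditions in the recursion and the sign of $\langle\mu,\alpha_i\rangle$ (which kills the one surviving dangerous term). Everything else is formal.
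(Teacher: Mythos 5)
Your reduction of $B_i$ to $A_i$ via $\iota$ is correct and matches the paper's own strategy (stated at the end of its proof of Lemma~\ref{qconnLemB} via $C_{(e,\varpi_i^\vee)}(t,\gamma)=C_\iota C_{(\varpi_i^\vee,e)}(\gamma^{-1},t^{-1})C_\iota$); the algebra you do with $C_\iota$, $T_{w_0}$, and $v^*=w_0vw_0$ to transport \eqref{AasB} into \eqref{BasB} checks out. Your proof of part (ii) via the induction on $\ell(w)$ for the coefficient of $\gamma^{w_0(\mu_+)}$ in $\eta(Y^\mu)(\gamma)T_w$ is also valid; the case analysis on $\textup{sgn}\langle\mu,\alpha_i\rangle$ and the use of the descent/ascent condition to kill the spurious $(k_i-k_i^{-1})T_w$ term are exactly the right points, and since the paper omits its proof of (ii), this gives a clean, checkable argument.

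The gap is in part (i). You want to show that every $\gamma$-monomial $\gamma^\nu$ occurring in $C_{(\varpi_i^\vee,e)}(t,\gamma)$ has $\nu-w_0(\varpi_i^\vee)\in Q_+^\vee$, and you try to derive this from the asymptotics \eqref{ClambdaTussenB}: the $\gamma$-support is a $t$-independent set, so (you argue) it equals the $\gamma$-support of $\lim_{|t^{\alpha_j^\vee}|\to 0}C_{(\varpi_i^\vee,e)}(t,\gamma)=\delta_{\underline{k}}^{-\varpi_i^\vee}\eta(Y^{\varpi_i^\vee})(\gamma)$, which lies in $\Sigma(\varpi_i^\vee)$. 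That inference fails: taking a limit can only shrink the set of surviving monomials. Concretely, each factor $R_0(z;\gamma)$ in \eqref{ClambdaB} splits as $\tfrac{1-z}{k_0^{-1}-k_0 z}\eta(T_0)(\gamma)+\tfrac{-(k_0-k_0^{-1})}{k_0^{-1}-k_0 z}\,\textup{id}$, and only the first summand survives as $|z|\to\infty$; in the product expansion of $C_{(\varpi_i^\vee,e)}$, all the cross-terms containing an ``$\textup{id}$''-factor have $t$-coefficients tending to $0$, so their $\gamma$-contributions could in principle lie outside the $\gamma$-support of the limit. So the limit tells you only that $S_0\subseteq\Sigma(\varpi_i^\vee)$ for the limiting support $S_0$, not that the full support $S\supseteq S_0$ lies there.

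The paper closes this by the opposite strategy: rather than limiting, it expands $C_{(\varpi_i^\vee,e)}(t,\gamma)=\sum_{w\leq\textup{t}(\varpi_i^\vee)}a_w(t)\eta(T_w)(\gamma)$ with $a_w\in Q_0(\mathcal A)$, writes each such $w$ as $w=\overline{v}_\lambda\,\textup{t}(\lambda_+)\,\overline{v}_\lambda^{-1}\widetilde{w}$ with $\lambda=w(0)$, uses \cite[(3.1.7)]{M} to factor $T_w=hY^{\lambda_+}h'$ with $h,h'\in H_0$, and then applies \eqref{etaYlambdaB} to see the $\gamma$-support of $\eta(T_w)(\gamma)$ lies in $\Sigma(\lambda_+)$. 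Finally $w\leq\textup{t}(\varpi_i^\vee)$ in Bruhat order forces $\lambda\preceq\varpi_i^\vee$, whence $\Sigma(\lambda_+)\subseteq\Sigma(\varpi_i^\vee)$ by \cite[(2.6.2)--(2.6.3)]{M}. This bounds the $\gamma$-support term by term for generic $t$, which is what the limit argument cannot do. You would need to replace your ``letting $|t^{\alpha_j^\vee}|\to 0$ \dots'' step with this Bruhat-order bookkeeping.
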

\begin{proof}
We give the proof of (i), which differs substantially from the
$\textup{GL}_N$ case (cf. \cite[Lem. 5.2]{MS}), and omit the proof
of (ii) which is similar. By \eqref{DlmB} we have
\[
A_i(t,\gamma)=
\delta_{\underline{k}}^{-\varpi^\vee_i}\gamma^{-w_0(\varpi^\vee_i)}
C_{(\varpi^\vee_i,e)}(t,\gamma).
\]
Because of \eqref{convokB} we only need to worry about the
$\gamma$-dependence of $A_i(t,\gamma)$.

Let $\textup{t}(\varpi_i^\vee)=\omega s_{i_1}\cdots s_{i_r}$
($\omega\in\Omega$, $0\leq i_1,\ldots,i_r\leq N$) be a reduced
expression. Then, in view of the cocycle condition, Lemma
\ref{YBlemB} and formula \eqref{RiB},
\[
C_{(\varpi_i^\vee,e)}(t,\gamma)=\eta(\omega)(\gamma)C_{(s_{i_1}\cdots
s_{i_r},e)}(\omega^{-1}t,\gamma)= \sum_{w\leq
t(\varpi_i^\vee)}a_w(t)\eta(T_w)(\gamma)
\]
for certain $a_w\in Q_0(\mathcal{A})$. Now consider such $w\in W$
with $w\leq \textup{t}(\varpi_i^\vee)$. We have a unique
decomposition $w=\textup{t}(\lambda)\widetilde{w}$, with
$\lambda=w(0)\in P^\vee$ and $\widetilde{w}\in W_0$. Then
\[
\textup{t}(\lambda)=\textup{t}(\overline{v}_\lambda(\lambda_+))=
\overline{v}_\lambda
\textup{t}(\lambda_+)\overline{v}_\lambda^{-1},
\]
hence $w=\overline{v}_\lambda \textup{t}(\lambda_+)
\overline{v}_\lambda^{-1}\widetilde{w}$. Multiple use of
\cite[(3.1.7)]{M} yields
$T_w=hT_{\textup{t}(\lambda_+)}h'=hY^{\lambda_+}h'$ for some
$h,h'\in H_0$, hence
\[
\eta(T_w)(\gamma)=\eta(h)\eta(Y^{\lambda_+})(\gamma)\eta(h').
\]
It remains to show that
$\gamma^{-w_0(\varpi_i^\vee)}\eta(Y^{\lambda_+})(\gamma)\in\mathcal{B}\otimes
\textup{End}(H_0)$. We can use \eqref{etaYlambdaB} to write
\[
\eta(Y^{\lambda_+})(\gamma)T_w=\sum_{u\leq
w}p_{u,w}^{\lambda_+}(\gamma)T_u
\]
with $p^{\lambda_+}_{u,w}(\gamma)\in
\textup{span}_\C\{\gamma^{\mu}\}_{\mu\in\Sigma(\lambda_+)}$ and
$p_{w,w}^{\lambda_+}(\gamma)=\gamma^{w^{-1}(\lambda_+)}$. Thus we
need to show that
\[
\gamma^{-w_0(\varpi_+)+\mu}\in\mathcal{B}\qquad\forall\mu\in\Sigma(\lambda_+),
\]
i.e., that $-w_0(\varpi^\vee_i)+\mu\in Q_+^\vee$ for all
$\mu\in\Sigma(\lambda_+)$. Since $\Sigma(\lambda_+)$ is
$W_0$-invariant and $w_0(Q_+^\vee)=-Q_+^\vee$, this is equivalent
to showing that $-\varpi^\vee_i+\mu\in-Q_+^\vee$ for all
$\mu\in\Sigma(\lambda_+)$, or
\[
\varpi_i^\vee-\mu\in Q_+^\vee\qquad\forall\mu\in\Sigma(\lambda_+).
\]
Now the fact that $w\leq \textup{t}(\varpi_i^\vee)$ in the Bruhat
order on $W$, implies that $\lambda\preceq\varpi_i^\vee$ (cf.
\cite[(2.7.11)]{M}), and hence either $\lambda_+=\varpi_i^\vee$ or
$\lambda_+<\varpi_i^\vee$. Fix $\mu\in\Sigma(\lambda_+)$. In the
first case, if $\lambda_+=\varpi_i^\vee$, we have
$\mu\in\varpi_i^\vee-Q_+^\vee$, since
\[
\Sigma(\varpi_i^\vee)=\bigcap_{v\in W_0}v(\varpi_i^\vee-Q_+^\vee)
\]
by \cite[(2.6.2)]{M}. Hence $\varpi_i^\vee-\mu\in Q_+^\vee$. In the
second case, if $\lambda_+<\varpi_i^\vee$, then
$\Sigma(\lambda_+)\subset\Sigma(\varpi_i^\vee)$ by
\cite[(2.6.3)]{M}, and again $\mu\in\varpi_i^\vee-Q_+^\vee$. This
concludes the proof for $A_i$. For $B_i$, use that
$C_{(e,\varpi_i^\vee)}(t,\gamma)=C_\iota
C_{(\varpi_i^\vee,e)}(\gamma^{-1},t^{-1})C_\iota$.
\end{proof}

Part (ii) of the previous lemma asserts that the endomorphisms
$A_i^{(0,0)}$ and $B_i^{(0,0)}$ are semisimple. Similarly as for
$\textup{GL}_N$, the main theorem follows from the lemma together
with the general theory of solutions of $q$-difference equations
as described in the appendix of \cite{MS} (in particular \cite[Thm
8.6]{MS}).

For $\epsilon>0$, put $B_{\epsilon}:=\{t\in T\mid
|t^{\alpha^\vee_i}|<\epsilon\mbox{ for }1\leq i\leq N\}$ and
$B_{\epsilon}^{-1}:=\{t\in T\mid t^{-1}\in B_\epsilon\}$.

\begin{thm}\label{asymTHMB}
There exists a unique solution $\Psi\in H_0^{\mathbb{K}}$ of the
gauged equations \eqref{gaugedeqnB} such that, for some
$\epsilon>0$,\\
{\bf (i)} $\Psi(t,\gamma)$ admits an $H_0$-valued power series
expansion
    \begin{equation}\label{psB}
        \Psi(t,\gamma)=\sum_{\alpha,\beta\in
        Q^\vee_+}K_{\alpha,\beta}t^{-\alpha}\gamma^{\beta},
        \qquad(K_{\alpha,\beta}\in H_0)
    \end{equation}
for $(t,\gamma)\in B_\epsilon^{-1}\times B_\epsilon$ which is
normally convergent on compacta of $B_\epsilon^{-1}\times
B_\epsilon$. In particular, $\Psi(t,\gamma)$ is analytic at
$(t,\gamma)\in B_\epsilon^{-1} \times B_\epsilon$;\\
{\bf(ii)} $K_{0,0}=T_{w_0}$.
\end{thm}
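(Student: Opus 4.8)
The plan is to read the gauged system \eqref{gaugedeqnB} as a holonomic system of $q$-difference equations of the standard shape treated in the appendix of \cite{MS} and to invoke \cite[Thm.~8.6]{MS}. Since the fundamental coweights form a $\Z$-basis of $P^\vee$, it suffices to consider the $2N$ generating equations, with connection matrices $A_i=D_{(\varpi_i^\vee,e)}$ and $B_i=D_{(e,\varpi_i^\vee)}$ ($1\leq i\leq N$); the full system follows from these by the cocycle property. The relevant local coordinates are $u_i:=t^{-\alpha_i^\vee}$ and $v_i:=\gamma^{\alpha_i^\vee}$, in which $B_\epsilon^{-1}\times B_\epsilon$ is the polydisc $\{|u_i|<\epsilon,\ |v_i|<\epsilon\}$; the shifts $t\mapsto q^{-\varpi_i^\vee}t$ and $\gamma\mapsto q^{\varpi_i^\vee}\gamma$ multiply $u_i$, resp.\ $v_i$, by a positive power of $q$ and leave the other coordinates fixed, so that an $H_0$-valued expansion $\sum_{\alpha,\beta\in Q_+^\vee}K_{\alpha,\beta}t^{-\alpha}\gamma^{\beta}$ as in \eqref{psB} is exactly a power series in nonnegative powers of the $u_i$ and $v_i$ contracting towards the origin under all the shifts.

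The next step is to verify the hypotheses of \cite[Thm.~8.6]{MS}. Holonomicity of \eqref{gaugedeqnB} is \eqref{compatibleB} conjugated by the invertible gauge $G$. By Lemma~\ref{qconnLemB}(i) each $A_i$ and $B_i$ is analytic at the origin $u_1=\dots=u_N=v_1=\dots=v_N=0$, with values $A_i^{(0,0)}$, resp.\ $B_i^{(0,0)}$, there; by Lemma~\ref{qconnLemB}(ii) these are simultaneously diagonal in the basis $\{T_{w_0}T_w\}_{w\in W_0}$ of $H_0$ with all eigenvalues in $\{0,1\}$, hence semisimple and pairwise commuting. The remaining ingredient is non-resonance. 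Comparing the coefficient of $t^{-\alpha}\gamma^{\beta}$ on the two sides of the $A_i$-equation gives
\[
\bigl(1-q^{\langle\varpi_i^\vee,\alpha\rangle}A_i^{(0,0)}\bigr)K_{\alpha,\beta}=\sum_{(\gamma_0,\delta_0)\neq(0,0)}A_i^{(\gamma_0,\delta_0)}\,q^{\langle\varpi_i^\vee,\alpha-\gamma_0\rangle}K_{\alpha-\gamma_0,\beta-\delta_0},
\]
where the sum is over $\gamma_0,\delta_0\in Q_+^\vee$ with $\alpha-\gamma_0\in Q_+^\vee$ and $\beta-\delta_0\in Q_+^\vee$, hence over finitely many pairs strictly below $(\alpha,\beta)$; the $B_i$-equations produce the analogous recursion with $q^{\langle\varpi_i^\vee,\beta\rangle}B_i^{(0,0)}$. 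If $\alpha\neq0$ then $\langle\varpi_i^\vee,\alpha\rangle>0$ for at least one $i$, and since $0<q<1$ the operator $1-q^{\langle\varpi_i^\vee,\alpha\rangle}A_i^{(0,0)}$ has no zero eigenvalue and is invertible; likewise some $1-q^{\langle\varpi_i^\vee,\beta\rangle}B_i^{(0,0)}$ is invertible when $\beta\neq0$. Thus for every $(\alpha,\beta)\neq(0,0)$ the coefficient $K_{\alpha,\beta}$ is uniquely determined by the lower ones, which is the non-resonance condition. Then \cite[Thm.~8.6]{MS} delivers the convergent formal series solution with prescribed constant term together with its meromorphic continuation to $\Psi\in H_0^{\mathbb K}$, and the recursion also shows that any meromorphic solution admitting an expansion \eqref{psB} with the same $K_{0,0}$ has the same coefficients, giving uniqueness.

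It remains to pin down $K_{0,0}$. At $(\alpha,\beta)=(0,0)$ the right-hand side of the recursion is empty, so $A_i^{(0,0)}K_{0,0}=K_{0,0}$ for all $i$; hence $K_{0,0}$ lies in the joint eigenspace of the $A_i^{(0,0)}$ for the eigenvalue $1$. By \eqref{AasB} this forces $K_{0,0}=\sum_w c_w T_{w_0}T_w$ with $c_w=0$ unless $w$ fixes every $w_0(\varpi_i^\vee)$; since the $w_0(\varpi_i^\vee)$ form a basis of $V$, only $w=e$ survives, so $K_{0,0}\in\C T_{w_0}$. Conversely $K_{0,0}=T_{w_0}$ is consistent with the whole system (as $e$ also fixes every $\varpi_i^\vee$, so $B_i^{(0,0)}T_{w_0}=T_{w_0}$ by \eqref{BasB}), so the normalisation $K_{0,0}=T_{w_0}$ singles out the unique $\Psi$.

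I expect the main obstacle to be not a single computation but the careful identification of \eqref{gaugedeqnB} with the abstract set-up of \cite[\S8]{MS}: choosing local coordinates so that the $t\to\infty$ and $\gamma\to0$ behaviours sit at one common corner, and confirming that the leading matrices there are semisimple, commuting, and such that the resulting recursions are non-resonant. The one genuinely new point is the eigenspace computation showing that the eigenvalue-$1$ joint eigenspace of the $A_i^{(0,0)}$ is exactly $\C T_{w_0}$, which rests on the fundamental coweights spanning $V$; all of the analytic content (convergence, meromorphic continuation, uniqueness among solutions of the stated form) is then imported from \cite[Thm.~8.6]{MS} exactly as in the $\textup{GL}_N$ case.
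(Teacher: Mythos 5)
Your proposal is correct and takes the same route as the paper, which simply records the dictionary needed to invoke the general theory of \cite[\S8]{MS}: set $M=2N$, take $A_i=A_i^{(0,0)}$, $A_{N+i}=B_i^{(0,0)}$, $q_i=q^{2/\|\alpha_i\|^2}$, and local variables $z_i=x^{-\alpha_i^\vee}$, $z_{N+j}=y^{\alpha_j^\vee}$, then appeal to \cite[Thm.~8.6]{MS}. You usefully make explicit what the paper leaves implicit, namely that Lemma~\ref{qconnLemB} supplies the semisimple commuting limit matrices, that the recursion is non-resonant because the eigenvalues of $A_i^{(0,0)}$ and $B_i^{(0,0)}$ lie in $\{0,1\}$ so $1-q^{\langle\varpi_i^\vee,\alpha\rangle}A_i^{(0,0)}$ is invertible for $\alpha\neq0$, and that the joint eigenvalue-$1$ eigenspace is exactly $\C T_{w_0}$ (since $w\in W_0$ fixing every $w_0(\varpi_i^\vee)$ must be the identity), which forces the normalisation in part (ii). This is the same argument, expanded rather than altered.
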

\begin{proof}
We only remark that in order to match the present situation with
the one considered in \cite[\S8]{MS}, one should take in
\cite[\S8]{MS}: $M=2N$, $A_i=A_i^{(0,0)}$, $A_{N+i}=B_i^{(0,0)}$
and $q_i=q^{2/\|\alpha_i\|^2}$ for $1\leq i\leq N$ and variables
$z_i=x^{-\alpha^\vee_i}$ and $z_{N+j}=y^{\alpha^\vee_j}$ for
$1\leq i,j\leq N$.
\end{proof}

\begin{defi}
We call $\Phi:=G\Psi\in\textup{SOL}$ the basic asymptotically free
solution of BqKZ.
\end{defi}

The $\tau(\iota)$-invariance of $\textup{SOL}$, the
$\iota$-invariance of $G$, and the uniqueness part of Theorem
\ref{asymTHMB} imply that $\Phi$ enjoys the following duality
property.
\begin{thm}[Duality]\label{selfdualTHMB}
The basic asymptotically free solution $\Phi$ of BqKZ is self-dual,
in the sense that
\[
\tau(\iota)\Phi=\Phi.
\]
\end{thm}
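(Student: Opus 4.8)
The plan is to deduce the self-duality of $\Phi=G\Psi$ from three ingredients that are already established: the $\tau(\iota)$-invariance of $\textup{SOL}$ (part (ii) of the corollary after Definition \ref{BqKZB}), the identity $\iota(G)=G$ (Proposition \ref{GpropB}(i)), and the uniqueness clause in Theorem \ref{asymTHMB}. Since $\Phi\in\textup{SOL}$ and $\textup{SOL}$ is stable under $\tau(\iota)$, the function $\tau(\iota)\Phi$ is again a solution of BqKZ. The strategy is to show that $\tau(\iota)\Phi$ is \emph{the same} solution as $\Phi$ by checking that its gauge transform $G^{-1}\tau(\iota)\Phi$ satisfies the two normalizing conditions (i) and (ii) of Theorem \ref{asymTHMB} that characterize $\Psi$ uniquely.

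First I would unwind how $\tau(\iota)$ interacts with the gauge factor $G$. By \eqref{fishB} we have $\tau(\iota)f=\sigma_\iota(f)$, and by Lemma \ref{actokB} (extended to $\mathbb{K}$) one has $\sigma_\iota(g\cdot f)=(\iota g)\cdot\sigma_\iota(f)$ for $g\in\mathbb{K}$. Writing $\Phi=G\cdot\Psi$ (with $G$ acting as the scalar $G(x)$ in the $\mathbb{K}$-module structure), this gives
\[
\tau(\iota)\Phi=\sigma_\iota(G\cdot\Psi)=(\iota G)\cdot\sigma_\iota(\Psi)=G\cdot\tau(\iota)\Psi,
\]
using $\iota(G)=G$ in the last step. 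Hence $G^{-1}\tau(\iota)\Phi=\tau(\iota)\Psi$, so it suffices to prove that $\tau(\iota)\Psi$ satisfies conditions (i) and (ii) of Theorem \ref{asymTHMB}; uniqueness then forces $\tau(\iota)\Psi=\Psi$, whence $\tau(\iota)\Phi=G\cdot\Psi=\Phi$.

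For condition (i): $\tau(\iota)\Psi$ solves the gauged equations \eqref{gaugedeqnB} because $\Psi$ does and $\textup{SOL}$ (equivalently, the solution space of \eqref{gaugedeqnB}) is $\tau(\iota)$-stable. The remaining point is the power-series behavior. The action of $\iota$ on $\mathbb{K}$ sends $(t,\gamma)\mapsto(\gamma^{-1},t^{-1})$, so it interchanges the two torus variables with an inversion; concretely $\iota(x^{-\alpha^\vee})=y^{\alpha^\vee}$ and $\iota(y^{\beta})=x^{-\beta}$ for $\alpha^\vee,\beta\in Q^\vee$. Therefore applying $\sigma_\iota$ to the expansion $\Psi=\sum_{\alpha,\beta\in Q^\vee_+}K_{\alpha,\beta}\,t^{-\alpha}\gamma^{\beta}$ turns it (formally) into $\sum_{\alpha,\beta}C_\iota(K_{\alpha,\beta})\,t^{-\beta}\gamma^{\alpha}$, which is again a power series in $t^{-\alpha^\vee_i}$ and $\gamma^{\alpha^\vee_j}$ supported on $Q^\vee_+\times Q^\vee_+$, with the same region of convergence $B_\epsilon^{-1}\times B_\epsilon$ (since $\iota$ preserves this bidisc). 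For condition (ii): the constant term of $\tau(\iota)\Psi$ is $C_\iota(K_{0,0})=C_\iota(T_{w_0})=T_{w_0^{-1}}=T_{w_0}$, using $K_{0,0}=T_{w_0}$, the description of $C_\iota$ in Lemma \ref{YBlemB}(i), and $w_0^{-1}=w_0$.

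I expect the only genuinely delicate point to be the rigorous bookkeeping in condition (i): verifying that $\tau(\iota)$ acting on an $H_0$-valued function given by a convergent power series on $B_\epsilon^{-1}\times B_\epsilon$ again yields such a function, i.e. that the operator $\sigma_\iota$ (a priori defined on $\mathbb{K}\otimes_{\mathbb{L}}\mathbb{H}$ and involving the anti-involution $*$, hence $Y$-to-$X$ swaps that mix the two tori) does not spoil the bigraded power-series form or the convergence. One must check that $\sigma_\iota$ restricted to the relevant space acts simply by the variable swap $x^{-\alpha^\vee}\leftrightarrow y^{\alpha^\vee}$ together with the linear map $C_\iota$ on $H_0$, with no extra $\mathbb{K}$-coefficients appearing; this is exactly the content of $C_\iota$ being the $\mathbb{K}$-linear extension of the anti-involution $T_w\mapsto T_{w^{-1}}$ (Lemma \ref{YBlemB}(i) and the remark following it), so the argument is really just invoking that description carefully. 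Everything else is a direct consequence of $\iota(G)=G$ and the uniqueness in Theorem \ref{asymTHMB}.
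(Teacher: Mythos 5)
Your proof is correct and takes exactly the same approach the paper sketches in the sentence preceding the theorem (the $\tau(\iota)$-invariance of $\textup{SOL}$, $\iota(G)=G$, and uniqueness in Theorem \ref{asymTHMB}); you have merely written out the details — in particular that $\iota$ preserves $B_\epsilon^{-1}\times B_\epsilon$, that $\sigma_\iota$ acts on the power series by swapping $t^{-\alpha}\leftrightarrow\gamma^\alpha$ while applying $T_w\mapsto T_{w^{-1}}$ to the coefficients, and that the constant term survives because $w_0^{-1}=w_0$ — all of which the paper leaves implicit.
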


\subsection{Singularities}
In this subsection we have a closer look at the analytic
properties of $\Psi$. Write $q_\alpha:=q^{2/\|\alpha\|^2}$ for
$\alpha\in R$ and set
\[\mathcal{S}_+:=\{t\in T \,\, | \,\, t^{\alpha^\vee}\in k_\alpha^{-2}
q_\alpha^{-\mathbb{N}}\, \textup{ for some }\, \alpha\in R_+\}.
\]
\begin{prop}\label{PsiAnalyticB}
The $H_0$-valued meromorphic function $\Psi$ is analytic on
$T\setminus\mathcal{S}_+^{-1}\times T\setminus\mathcal{S}_+$.
\end{prop}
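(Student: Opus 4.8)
The plan is to propagate analyticity of the power series solution $\Psi$ of \eqref{psB} out of its domain of convergence $B_\epsilon^{-1}\times B_\epsilon$ to the whole of $(T\setminus\mathcal{S}_+^{-1})\times(T\setminus\mathcal{S}_+)$ by iterating the gauged equations \eqref{gaugedeqnB} in the form $\Psi(t,\gamma)=D_{(\lambda,\mu)}(t,\gamma)\,\Psi(q^{-\lambda}t,q^{\mu}\gamma)$. Given a point $(t_0,\gamma_0)$, I would first choose a dominant coweight such as $\lambda=\mu=n\sum_{j}\varpi^\vee_j$ with $n\gg0$; since $0<q<1$ and $\langle\lambda,\alpha^\vee_i\rangle>0$, one has $|(q^{-\lambda}t_0)^{\alpha^\vee_i}|=q^{-\langle\lambda,\alpha^\vee_i\rangle}|t_0^{\alpha^\vee_i}|\to\infty$ and $|(q^{\mu}\gamma_0)^{\alpha^\vee_i}|\to0$, so for $n$ large $(q^{-\lambda}t_0,q^{\mu}\gamma_0)$ lies in the open polydisc $B_\epsilon^{-1}\times B_\epsilon$ on which the series \eqref{psB} converges normally on compacta; hence $\Psi(q^{-\lambda}\,\cdot\,,q^{\mu}\,\cdot\,)$ is analytic near $(t_0,\gamma_0)$. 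The only thing left to check is that $D_{(\lambda,\mu)}$ is \emph{regular} at $(t_0,\gamma_0)$ (note that in this direction of continuation we only multiply by the connection matrices, never invert them). The implication ``regularity of the $q$-connection matrices along the $q$-shift path $\Rightarrow$ analyticity of $\Psi$'' is exactly the analytic-continuation part of the general theory of solutions of $q$-difference equations used to prove Theorem \ref{asymTHMB} (see the appendix of \cite{MS}).

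To control $D_{(\lambda,\mu)}$ I would use the (gauged) cocycle property together with the length-additive factorisation of $\textup{t}(\lambda)$ and $\textup{t}(\mu)$ into fundamental translations, which expresses $D_{(\lambda,\mu)}(t_0,\gamma_0)$ as a finite product of factors $A_i=D_{(\varpi^\vee_i,e)}$ evaluated at points $(q^{-\lambda'}t_0,\gamma_0)$ with $\lambda'\in P^\vee_+$ and factors $B_j=D_{(e,\varpi^\vee_j)}$ evaluated at points $(q^{-\lambda}t_0,q^{\mu'}\gamma_0)$ with $\mu'\in P^\vee_+$ (cf.\ Lemma \ref{qconnLemB}). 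It therefore suffices to prove \textbf{(a)} $A_i$ is regular on $(T\setminus\mathcal{S}_+^{-1})\times T$ and $B_j$ is regular on $T\times(T\setminus\mathcal{S}_+)$, and \textbf{(b)} $q^{-\lambda'}(T\setminus\mathcal{S}_+^{-1})\subseteq T\setminus\mathcal{S}_+^{-1}$ and $q^{\mu'}(T\setminus\mathcal{S}_+)\subseteq T\setminus\mathcal{S}_+$ for all $\lambda',\mu'\in P^\vee_+$. Claim (b) is an immediate computation: $(q^{-\lambda'}t)^{\alpha^\vee}=q_\alpha^{-\langle\lambda',\alpha\rangle}t^{\alpha^\vee}$ with $\langle\lambda',\alpha\rangle\ge0$ for $\alpha\in R_+$, so, using $\mathcal{S}_+^{-1}=\{t\mid t^{\alpha^\vee}\in k_\alpha^{2}q_\alpha^{\mathbb{N}}\text{ for some }\alpha\in R_+\}$, one gets $q^{-\lambda'}t\in\mathcal{S}_+^{-1}\Rightarrow t\in\mathcal{S}_+^{-1}$; the statement for $\mathcal{S}_+$ is the same computation with signs reversed.

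The heart of the matter — and the step I expect to be the main obstacle, since this is precisely where the combinatorics differs from the $\textup{GL}_N$ case — is claim (a). Here I would write $A_i(t,\gamma)=\delta_{\underline{k}}^{-\varpi^\vee_i}\gamma^{-w_0(\varpi^\vee_i)}C_{(\varpi^\vee_i,e)}(t,\gamma)$ as in \eqref{DlmB}; the monomial prefactor has no zeros or poles on $T\times T$, so it is enough to locate the poles of $C_{(\varpi^\vee_i,e)}$. Expanding $C_{(\varpi^\vee_i,e)}$ along a reduced word for $\textup{t}(\varpi^\vee_i)$ as in the proof of Lemma \ref{asymptoticB}, it is a product of operators $R_j(t^{b^\vee};\gamma)$ with $b$ ranging over $S(\textup{t}(-\varpi^\vee_i))=\{\alpha+mc\mid\alpha\in R_-,\ 1\le m\le-\langle\varpi^\vee_i,\alpha\rangle\}$. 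By \eqref{RiB}, $R_j(z;\gamma)=c(z;k_j)^{-1}\bigl(\eta(T_j)(\gamma)-b(z;k_j)\bigr)$; since $c(z;k_j)^{-1}=\frac{1-z}{k_j^{-1}-k_jz}$ vanishes at $z=1$, the apparent pole of $b(z;k_j)$ there cancels, and $\eta(T_j)(\gamma)$ is a regular $\textup{End}(H_0)$-valued function of $\gamma\in T$, so $R_j(z;\gamma)$ is regular in $\gamma\in T$ and its only pole in $z$ is at $z=k_j^{-2}$. Using $(\alpha+mc)^\vee=\alpha^\vee+\frac{2m}{\|\alpha\|^2}c$ (so $t^{(\alpha+mc)^\vee}=q_\alpha^{m}t^{\alpha^\vee}$) and $k_{\alpha+mc}=k_\alpha$, the factor with $b=\alpha+mc$, $\alpha\in R_-$, $m\ge1$, is singular only where $q_\alpha^{m}t^{\alpha^\vee}=k_\alpha^{-2}$; writing $\beta:=-\alpha\in R_+$ this reads $(t^{-1})^{\beta^\vee}=k_\beta^{-2}q_\beta^{-m}$, i.e.\ $t\in\mathcal{S}_+^{-1}$. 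Hence $A_i$ is regular on $(T\setminus\mathcal{S}_+^{-1})\times T$, and the statement for $B_j$ follows by interchanging the roles of $t$ and $\gamma$ via $C_{(e,\varpi^\vee_j)}(t,\gamma)=C_\iota\,C_{(\varpi^\vee_j,e)}(\gamma^{-1},t^{-1})\,C_\iota$ (the Remark after Lemma \ref{YBlemB}). Combining (a), (b) and the first paragraph yields the proposition.
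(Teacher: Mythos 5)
Your proposal is correct and follows essentially the same route as the paper's own proof: the cocycle factorisation of $D_{(\lambda,\mu)}$ into translations by fundamental coweights, the location of the poles of $C_{(\varpi_j^\vee,e)}$ via the reduced-expression product of $R$-operators and the single pole of $R_j(z;\gamma)$ at $z=k_j^{-2}$, the $\iota$-symmetry to handle the $\gamma$-direction, and analytic continuation of the power-series solution through the gauged equations out of $B_\epsilon^{-1}\times B_\epsilon$. You merely spell out a couple of steps the paper leaves implicit (the stability of $T\setminus\mathcal{S}_+^{-1}$ and $T\setminus\mathcal{S}_+$ under the relevant dominant $q$-shifts, and the cancellation of the apparent pole of $b(z;k_j)$ at $z=1$), so no genuinely new approach is involved.
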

\begin{proof}
Let $\lambda,\mu\in P_+^\vee$. By \eqref{DlmB} and the cocycle
property, $D_{(\lambda,\mu)}(t,\gamma)$ is regular at
$(t,\gamma)=(s,\zeta)$ if
$C_{(\varpi^\vee_i,\varpi^\vee_j)}(q^{-\nu}t,q^\xi\gamma)$ is
regular at $(t,\gamma)=(s,\zeta)$ for all $1\leq i,j\leq N$ and
$\xi,\nu\in P^\vee_+$. This in turn holds, again by virtue of the
cocycle property together with \eqref{convokB}, if
$C_{(\omega_j^\vee,e)}(q^{-\nu}t,\gamma)$ is regular at
$(t,\gamma)=(s,\zeta)$ for all $\nu\in P^\vee_+$ and $1\leq j\leq
N$.

Suppose we have a reduced expression
$\textup{t}(\varpi^\vee_j)=s_{i_1}\cdots s_{i_r}\omega$ ($1\leq
j\leq N$). Similarly as in the proof of Lemma \ref{asymptoticB}, we have
\begin{equation*}\label{fundamentalB}
\begin{split}
C_{(\varpi^\vee_j,e)}(t,\gamma)=
R_{i_1}(t^{a^\vee_{i_1}};\gamma) \cdots
R_{i_r}(t^{s_{i_1}\cdots
s_{i_{r-1}}(a^\vee_{i_r})};\gamma)\eta(\omega)(\gamma),
\end{split}
\end{equation*}
and
\[
\begin{split}
\{a_{i_1},s_{i_1}(a_{i_2}),\ldots,s_{i_1}\cdots
s_{i_{r-1}}(a_{i_r})\}&=S(\textup{t}(-\varpi_j^\vee))\\
&=\{\alpha+mc\mid\alpha\in R_-,\:1\leq
m\leq-\langle\varpi_j^\vee,\alpha\rangle\}
\end{split}
\]
Now $R_i(z;\gamma)$ has only a simple pole at $z=k_i^{-2}$, so
$C_{(\varpi^\vee_j,e)}(t,\gamma)$ has possibly poles at
\[
t^{a^\vee}=k_a^{-2},\qquad a\in S(\textup{t}(-\varpi_j^\vee)).
\]
Note that
\[t^{(\alpha+mc)^\vee}=t^{\alpha^\vee+(2m/\|\alpha\|^2)c}=
q_\alpha^{m}t^{\alpha^\vee},\] hence there are possibly poles at
\[
q_\alpha^{m}t^{\alpha^\vee}=k_\alpha^{-2},\qquad\alpha\in R_-,\:
1\leq m\leq-\langle\varpi_j^\vee,\alpha\rangle,
\]
or, equivalently, at
\[
t^{-\alpha^\vee}=q_\alpha^{-m}k_\alpha^{-2},\qquad\alpha\in R_+,\:
1\leq m\leq\langle\varpi_j^\vee,\alpha\rangle.
\]
Consequently, $C_{(\varpi^\vee_j,e)}(q^{-\nu}t,\gamma)$ is regular at
$t\in T\setminus\mathcal{S}^{-1}_+$ for all $\nu\in P_+^\vee$. By
the considerations in the previous paragraph we conclude that
$D_{(\lambda,\mu)}(t,\gamma)$ is regular at $(t,\gamma)\in
T\setminus\mathcal{S}_+^{-1}\times T\setminus\mathcal{S}_+$ for
all $\lambda,\mu\in P_+^\vee$.

Let $U\times V$ be a relatively compact open subset of
$T\setminus\mathcal{S}_+^{-1}\times T\setminus\mathcal{S}_+$.
Choose $\lambda,\mu\in P_+^\vee$ such that the closure of
$q^{-\lambda}U\times q^{\mu}V$ is contained in
$B_\epsilon^{-1}\times B_\epsilon$. Then as meromorphic
$H_0$-valued function in $(t,\gamma)\in U\times V$, we have
\begin{equation}\label{PsiDPsiB}
\Psi(t,\gamma)=D_{(\lambda,\mu)}(t,\gamma)\Psi(q^{-\lambda}t,q^\mu\gamma),
\end{equation}
and by Theorem \ref{asymTHMB}(i) the proof is now complete.
\end{proof}
\begin{rema}
The previous proposition gives in particular information about the singularities of the basic asymptotic solution $\Phi=G\Psi$. Unfortunately, it is not possible to precisely pinpoint the singularities of $G$. To overcome this issue we could choose a different theta function in the definition of $G$, namely one for which we have a product formula available. The price we pay is that we have to enlarge the torus $T$. Let $\vartheta_q\in\mathcal{M}(T)$ denote the renormalized Jacobi
theta function
\begin{equation}\label{thetaB}
\vartheta_q(z):=\prod_{m\geq0}(1-q^mz)(1-q^{m+1}/z)
\end{equation}
for $z\in \C^\times$. It satisfies
\begin{equation}\label{thetafuncB}
\vartheta_q(q^mz)=(-z)^{-m}q^{-\frac{1}{2}m(m-1)}\vartheta_q(z),\qquad
m\in\Z.
\end{equation}
Let $e\in\mathbb{N}$ be the unique positive
integer such that $e\langle P^\vee,P^\vee\rangle=\Z$. For all $a\in S$, fix $k_a^{1/6e}$ such that $k_a^{1/6e}=k_{w(a)}^{1/6e}$ for all $w\in W$.
Now put $T':=\textup{Hom}_\Z(6e P^\vee,\C^\times)$.
The canonical map
$T'\twoheadrightarrow T$ gives rise to
an embedding $\mathcal{M}(T\times
T)\hookrightarrow\mathcal{M}(T'\times T')$. Now define $\widehat{G}\in\mathcal{M}(T'\times T')$ by
\begin{equation}\label{GB}
\widehat{G}(t,\gamma):=\prod_{i,j=1}^N
\left(\frac{\vartheta_{q^{1/e}}(\kappa_j^{-1/e}
t^{\alpha_i/e})\vartheta_{q^{1/e}}(\kappa_{i}^{-1/e}
\gamma^{w_0(\alpha_{j})/e})}
{\vartheta_{q^{1/e}}(t^{\alpha_i/e}\:\gamma^{w_0(\alpha_j)/e})}
\right)^{e\langle\varpi_i^\vee,\varpi_j^\vee\rangle},
\end{equation}
where $\kappa_j^{1/e}:=\prod_{\alpha\in
R_+}k_\alpha^{\langle\alpha_j,\alpha\rangle/e}$. Then $\widehat{G}$
satisfies the properties of Proposition \ref{GpropB}.
\end{rema}

\begin{cor}\label{PsiGammaExpansionB}
{\bf (i)} Write $\Psi(t,\gamma)=\sum_{\alpha\in
Q_+^\vee}\Gamma_\alpha(\gamma)t^{-\alpha}$ for $(t,\gamma)\in
B_{\epsilon}^{-1}\times B_\epsilon$, with $\Gamma_\alpha$
($\alpha\in Q_+^\vee$) the analytic $H_0$-valued function
$\Gamma_\alpha(\gamma):=\sum_{\beta\in
Q_+^\vee}K_{\alpha,\beta}\gamma^\beta$ on $B_\epsilon$. Then each
$\Gamma_\alpha$ can uniquely be extended to a meromorphic
$H_0$-valued function on $T$, analytic on
$T\setminus\mathcal{S}_+$, such that for $(t,\gamma)\in
B_{\epsilon}^{-1}\times T\setminus\mathcal{S}_+$
\[
\Psi(t,\gamma)=\sum_{\alpha\in
Q_+^\vee}\Gamma_\alpha(\gamma)t^{-\alpha},
\]
converging normally on compacta of $B_{\epsilon}^{-1}\times
T\setminus\mathcal{S}_+$.\\
{\bf (ii)} The leading term $\Gamma_0$
satisfies
\begin{equation}\label{gamma0B}
\Gamma_0(\gamma)=K(\gamma)T_{w_0},
\end{equation}
for some $K\in\mathcal{M}(T)$.
\end{cor}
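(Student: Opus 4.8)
The plan is to obtain the coefficients $\Gamma_\alpha$ from the power series of Theorem~\ref{asymTHMB} by Cauchy's integral formula, to propagate them off $B_\epsilon$ using the analyticity statement of Proposition~\ref{PsiAnalyticB} and the $q$-difference equations \eqref{gaugedeqnB}, and finally to pin down the leading coefficient from the formula \eqref{C0lambdaB} for the asymptotic connection matrices. \emph{Part (i), first step.} After shrinking $\epsilon$ one may assume $B_\epsilon\cap\mathcal{S}_+=\emptyset$, so that $B_\epsilon^{-1}\subseteq T\setminus\mathcal{S}_+^{-1}$ and, by Proposition~\ref{PsiAnalyticB}, $\Psi$ is analytic on the connected open set $\mathcal{U}:=B_\epsilon^{-1}\times(T\setminus\mathcal{S}_+)$. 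On $B_\epsilon^{-1}\times B_\epsilon$ the expansion \eqref{psB} involves only the monomials $t^{-\alpha}$ with $\alpha\in Q_+^\vee\subseteq Q^\vee$; applying the identity theorem with $t$ fixed and $\gamma$ ranging over $T\setminus\mathcal{S}_+$ shows that on all of $\mathcal{U}$ the function $\Psi$ depends on $t$ only through $z_i:=t^{-\alpha_i^\vee}$, so one may extract, by Cauchy's integral formula over the tori $|z_i|=r$ $(0<r<\epsilon)$, an $H_0$-valued function $\Gamma_\alpha$ that is analytic in $\gamma\in T\setminus\mathcal{S}_+$, independent of $r$, and reduces to $\sum_\beta K_{\alpha,\beta}\gamma^\beta$ on $B_\epsilon$; uniqueness of this extension is the identity theorem on the connected set $T\setminus\mathcal{S}_+$. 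Cauchy's estimate $\|\Gamma_\alpha(\gamma)\|\le M(r,L)\,r^{-|\alpha|}$, valid uniformly on compacta $L\subseteq T\setminus\mathcal{S}_+$, gives the asserted normal convergence of $\sum_{\alpha\in Q_+^\vee}\Gamma_\alpha(\gamma)t^{-\alpha}$ on compacta of $\mathcal{U}$, and its sum equals $\Psi$ there by the identity theorem.

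\emph{Part (i), meromorphic continuation across $\mathcal{S}_+$.} For $\alpha=0$: comparing the terms of lowest order in the $t^{-\alpha_i^\vee}$ in \eqref{gaugedeqnB} with $\lambda=0$ gives a relation $\Gamma_0(\gamma)=D_{(0,\mu),0}(\gamma)\,\Gamma_0(q^\mu\gamma)$, where $D_{(0,\mu),0}$, the constant term of $D_{(0,\mu)}$ in the $t^{-\alpha_i^\vee}$-expansion, is rational in $\gamma$ because $D_{(0,\mu)}$ lies in $(\mathcal{A}\otimes Q_0(\mathcal{B}))\otimes\textup{End}(H_0)$ by the cocycle property and Lemma~\ref{qconnLemB}; since for any $\gamma_0\in T$ one can choose $\mu\in P_+^\vee$ large with $q^\mu\gamma_0\in B_\epsilon$, the right-hand side is meromorphic near $\gamma_0$, and the resulting local continuations glue to a meromorphic $\Gamma_0$ on $T$. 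For $\alpha\neq0$ I would induct on $|\alpha|$: comparing the coefficient of $t^{-\alpha}$ in \eqref{gaugedeqnB} with $(\lambda,\mu)=(\varpi_i^\vee,0)$, for an index $i$ with $\langle\varpi_i^\vee,\alpha\rangle>0$ (such $i$ exists since $\alpha\in Q_+^\vee\setminus\{0\}$), yields
\[
\bigl(\textup{id}-q^{\langle\varpi_i^\vee,\alpha\rangle}A_{i,0}(\gamma)\bigr)\Gamma_\alpha(\gamma)=\sum_{\beta+\delta=\alpha,\;\beta\neq0}q^{\langle\varpi_i^\vee,\delta\rangle}A_{i,\beta}(\gamma)\Gamma_\delta(\gamma),
\]
where the coefficients $A_{i,\beta}$ of $A_i=D_{(\varpi_i^\vee,e)}$ are regular on $T$ (Lemma~\ref{qconnLemB}), the $\Gamma_\delta$ on the right are meromorphic by the inductive hypothesis, and $\det\bigl(\textup{id}-q^{\langle\varpi_i^\vee,\alpha\rangle}A_{i,0}(\gamma)\bigr)\not\equiv0$ because $q^{\langle\varpi_i^\vee,\alpha\rangle}<1$ while the eigenvalues of $A_{i,0}(\gamma)$ are monomials $\gamma^\nu$ $(\nu\in P^\vee)$ by \eqref{etaYlambdaB}; hence $\Gamma_\alpha$ is meromorphic on $T$.

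\emph{Part (ii).} From \eqref{DlmB} with $\lambda=\varpi_i^\vee$, $\mu=0$, together with \eqref{C0lambdaB} (the powers of $\delta_{\underline{k}}$ cancel), the constant term is $A_{i,0}(\gamma)=\gamma^{-w_0(\varpi_i^\vee)}\eta\bigl(T_{w_0}Y^{w_0(\varpi_i^\vee)}T_{w_0}^{-1}\bigr)(\gamma)$, while the $\alpha=0$ comparison above reads $A_{i,0}(\gamma)\Gamma_0(\gamma)=\Gamma_0(\gamma)$. Since $\eta(T_{w_0})$ is invertible and $\gamma$-independent, putting $v:=\eta(T_{w_0})^{-1}\Gamma_0$ turns this into $\eta(Y^{w_0(\varpi_i^\vee)})(\gamma)v(\gamma)=\gamma^{w_0(\varpi_i^\vee)}v(\gamma)$ for all $i$, hence $\eta(p(Y))(\gamma)v(\gamma)=p(\gamma)v(\gamma)$ for every $p\in\C[T]$ because $\{w_0(\varpi_i^\vee)\}_{i=1}^N$ is a $\Z$-basis of $P^\vee$. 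By Lemma~\ref{commoneigB} the constant function $\xi_e=\eta(\widetilde{S}_e^*)T_e=T_e$ is a common eigenfunction with precisely this eigenvalue, and for $\gamma$ outside a proper subvariety the $\xi_w(\gamma)$ $(w\in W_0)$ form a basis of simultaneous eigenvectors with pairwise distinct eigenvalue characters, so $v(\gamma)\in\C\,T_e$ for generic $\gamma$. As $v$ is meromorphic (Part (i)) and not identically zero — its value at $\gamma^{\alpha_i^\vee}=0$ is $\eta(T_{w_0})^{-1}T_{w_0}=T_e$, using $K_{0,0}=T_{w_0}$ — it follows that $v=K\,T_e$ for a unique $K\in\mathcal{M}(T)$, whence $\Gamma_0(\gamma)=\eta(T_{w_0})v(\gamma)=K(\gamma)\,\eta(T_{w_0})T_e=K(\gamma)T_{w_0}$, using $\eta(T_{w_0})T_e=T_{w_0}$ (iterate \eqref{formulaTIB} along a reduced word for $w_0$).

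The step I expect to be most delicate is the meromorphic continuation across $\mathcal{S}_+$ in Part (i): it requires keeping careful track of which coefficient relations in \eqref{gaugedeqnB} are available where, and of the triangular recursion and the nonvanishing of the relevant determinants. Once $\Gamma_0$ is known to be meromorphic on $T$, Part (ii) is a short eigenvector computation.
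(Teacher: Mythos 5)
Your Part~(ii) argument is essentially the paper's: extract the constant term of the gauged equation with $(\lambda,\mu)=(\varpi_i^\vee,0)$, identify $A_i^{(0,0)}$ from \eqref{DlmB} and \eqref{C0lambdaB}, and invoke Lemma~\ref{commoneigB}. Your verification that the $\delta_{\underline{k}}$-powers cancel, the reduction to an eigenvalue equation for $v=\eta(T_{w_0})^{-1}\Gamma_0$, the generic-distinctness of the characters $p\mapsto(w^{-1}p)(\gamma)$, and the nondegeneracy check via $K_{0,0}=T_{w_0}$ are all correct; the paper compresses this to one sentence.

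For Part~(i), which the paper only cites to \cite[Lemma 5.7]{MS}, your two-stage argument (Cauchy extraction over tori $|t^{-\alpha_i^\vee}|=r$ to get analyticity on $T\setminus\mathcal{S}_+$ with normal convergence, then meromorphic continuation across $\mathcal{S}_+$) is sound. The identity-theorem step showing that $\Psi$ factors through the variables $z_i=t^{-\alpha_i^\vee}$ does require $T\setminus\mathcal{S}_+$ to be connected; this holds since $\mathcal{S}_+$ is a locally finite union of analytic hypersurfaces, and is worth stating explicitly. Where your route differs from what is natural here is in splitting the continuation into $\alpha=0$ (via $D_{(0,\mu)}$) and $\alpha\neq 0$ (via a triangular recursion against the $A_{i,\beta}$, requiring the nonvanishing of $\det(\textup{id}-q^{\langle\varpi_i^\vee,\alpha\rangle}A_{i,0}(\gamma))$). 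That recursion is correct — the $w=e$ eigenvalue of $A_{i,0}$ being $1$ with $0<q^{\langle\varpi_i^\vee,\alpha\rangle}<1$ does give generic invertibility, and the induction in $\textup{ht}(\alpha)$ is well-founded — but it is unnecessary: the $D_{(0,\mu)}$ argument you use for $\alpha=0$ applies verbatim to every $\alpha$. By the cocycle relation $D_{(0,\mu)}$ is a product of shifted $B_j$'s, hence lies in $(\mathcal{A}\otimes Q_0(\mathcal{B}))\otimes\textup{End}(H_0)$ (shifts $\gamma\mapsto q^{\nu}\gamma$ with $\nu\in P^\vee_+$ preserve $Q_0(\mathcal{B})$), so it is a finite polynomial in the $t^{-\alpha_i^\vee}$ with rational $\gamma$-coefficients, and one gets directly
\[
\Gamma_\alpha(\gamma)=\sum_{\beta+\delta=\alpha}D_{(0,\mu),\beta}(\gamma)\,\Gamma_\delta(q^\mu\gamma),
\]
a finite sum that is meromorphic near any prescribed $\gamma_0$ once $\mu\in P^\vee_+$ is chosen with $q^\mu\gamma_0\in B_\epsilon$. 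This eliminates the need to analyze eigenvalues of $A_{i,0}$ and collapses the two cases into one. Both approaches are valid; yours trades a single uniform continuation formula for a recursion whose solvability you must justify separately.
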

\begin{proof}
(i) See \cite[Lemma 5.7]{MS}.\\
(ii) This is also similar as in \cite{MS}, but for the convenience
of the reader we provide the details. $\Psi$ satisfies
$A_i(t,\gamma)\Psi(q^{-\varpi_i^\vee}t,\gamma)=\Psi(t,\gamma)$ for
$1\leq i\leq N$. Considering the limit $|t^{-\alpha_j^\vee}|\to0$,
we obtain
\[
\gamma^{-w_0(\varpi_i^\vee)}\eta(T_{w_0}Y^{w_0(\varpi_i^\vee)}T_{w_0}^{-1})
(\gamma)\Gamma_0(\gamma)=\Gamma_0(\gamma)
\]
for $1\leq i\leq N$, and in view of Lemma \ref{commoneigB} this
forces
\[
\Gamma_0(\gamma)=K(\gamma)\eta(T_{w_0})\xi_e(\gamma)=K(\gamma)T_{w_0}
\]
for some $K\in\mathcal{M}(T)$.
\end{proof}

\begin{rema}
In the following section we will give an explicit formula for
$K(\gamma)$. It will follow immediately from an explicit formula
for the leading term of the so-called Harish-Chandra series
solution of a bispectral problem corresponding to BqKZ. In
\cite{MS}, for $\textup{GL}_N$, it is exactly the way around.
There, the latter is found as a consequence of an explicit formula
for $K(\gamma)$, which in turn is due to rather explicit
expressions for the $q$-connection matrices of BqKZ.
\end{rema}

From Proposition \ref{PsiAnalyticB} and its corollary we obtain
the following result for specialized spectral parameter.
\begin{cor}\label{corPsiSpecB}
Fix $\zeta\in T\setminus\mathcal{S}_+$. \\
{\bf (i)} The $H_0$-valued meromorphic
function $\Psi(t,\gamma)$ in $(t,\gamma)\in T\times T$ can be specialized
at $\gamma=\zeta$, giving rise to a meromorphic $H_0$-valued function
$\Psi(t,\zeta)$ in $t\in T$, which is regular at
$t\in T\setminus\mathcal{S}_+^{-1}$.\\
{\bf (ii)} For $t\in B_\epsilon^{-1}$ we have the power series expansion
\[
\Psi(t,\zeta)=\sum_{\alpha\in Q^\vee_+}\Gamma_\alpha(\zeta)t^{-\alpha},
\]
converging normally on compacta of $B_\epsilon^{-1}$.\\
{\bf (iii)} $\Psi(t,\zeta)$ satisfies the system of $q$-difference
equations
\begin{equation}\label{specialgaugeB}
D_{(\lambda,e)}(t,\zeta)\Psi(q^{-\lambda}t,\zeta)=\Psi(t,\zeta),\qquad
\forall \lambda\in P^\vee.
\end{equation}
\end{cor}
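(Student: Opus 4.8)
The plan is to obtain all three parts by specialising, at $\gamma=\zeta$, objects already controlled by Theorem~\ref{asymTHMB}, Proposition~\ref{PsiAnalyticB} and Corollary~\ref{PsiGammaExpansionB}; the organising observation is that $\zeta\notin\mathcal{S}_+$ places the slice $T\times\{\zeta\}\simeq T$ outside the ``$\gamma$-part'' of every relevant singular locus, so that restriction to this slice is harmless. Part (ii) comes for free: by Corollary~\ref{PsiGammaExpansionB}(i) each coefficient $\Gamma_\alpha$ ($\alpha\in Q_+^\vee$) is analytic on $T\setminus\mathcal{S}_+$ and $\Psi(t,\gamma)=\sum_{\alpha\in Q_+^\vee}\Gamma_\alpha(\gamma)t^{-\alpha}$ converges normally on compacta of $B_\epsilon^{-1}\times(T\setminus\mathcal{S}_+)$, so evaluating at $\gamma=\zeta$ yields an $H_0$-valued series converging normally on compacta of $B_\epsilon^{-1}$, hence an analytic $H_0$-valued function $\Psi(t,\zeta)$ there.

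For part (i) I would show directly that the $H_0$-valued meromorphic function $\Psi$ on $T\times T$ restricts to the slice $T\times\{\zeta\}$. By Proposition~\ref{PsiAnalyticB}, $\Psi$ is analytic on $(T\setminus\mathcal{S}_+^{-1})\times(T\setminus\mathcal{S}_+)$, so every irreducible component of its polar divisor is a hypersurface of the form $D\times T$ with $D\subseteq\mathcal{S}_+^{-1}$, or $T\times D'$ with $D'\subseteq\mathcal{S}_+$. Since $\zeta\notin\mathcal{S}_+$, no such component contains $T\times\{\zeta\}$, so the restriction is a well-defined meromorphic $H_0$-valued function on $T$, analytic on $T\setminus\mathcal{S}_+^{-1}$, and agreeing with the series of (ii) on $B_\epsilon^{-1}$ by analytic continuation. (Equivalently, one may build the extension from the power series using the $q$-difference relation \eqref{PsiDPsiB}: for a relatively compact $U\subset T\setminus\mathcal{S}_+^{-1}$ pick a dominant $\lambda$ with $\overline{q^{-\lambda}U}\subset B_\epsilon^{-1}$ and set $\Psi(t,\zeta):=D_{(\lambda,e)}(t,\zeta)\Psi(q^{-\lambda}t,\zeta)$, using that $D_{(\lambda,e)}(\,\cdot\,,\zeta)$ is regular on $T\setminus\mathcal{S}_+^{-1}$ and that consistency of the choices follows from the holonomy \eqref{compatibleB}, exactly as in \cite[Lem.~5.7]{MS}.)

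For part (iii) I would restrict the gauged equations. Since $\Psi$ solves \eqref{gaugedeqnB}, for each fixed $\lambda\in P^\vee$ we have the identity $D_{(\lambda,e)}(t,\gamma)\Psi(q^{-\lambda}t,\gamma)=\Psi(t,\gamma)$ of $H_0$-valued meromorphic functions on $T\times T$. By \eqref{convokB}, $C_{(\lambda,e)}$, and hence $D_{(\lambda,e)}=\delta_{\underline{k}}^{-\lambda}\gamma^{-w_0(\lambda)}C_{(\lambda,e)}$ by \eqref{DlmB}, is regular in the variable $\gamma$ on all of $T$; moreover the function $(t,\gamma)\mapsto\Psi(q^{-\lambda}t,\gamma)$ has no pole along $T\times\{\zeta\}$ either, because the $\gamma$-singularities of $\Psi$ lie in $\mathcal{S}_+$ by Proposition~\ref{PsiAnalyticB}. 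Hence neither side of the identity is singular along the whole slice, restriction to $\gamma=\zeta$ commutes with the product, and one obtains \eqref{specialgaugeB} for every $\lambda\in P^\vee$. (Alternatively one checks \eqref{specialgaugeB} for $\lambda=\varpi^\vee_i$ on $B_\epsilon^{-1}$ by substituting the series of (ii) into the recursion $A_i(t,\gamma)\Psi(q^{-\varpi^\vee_i}t,\gamma)=\Psi(t,\gamma)$, and then propagates it to arbitrary $\lambda\in P^\vee$ via the cocycle identity $D_{(\lambda+\nu,e)}(t,\gamma)=D_{(\lambda,e)}(t,\gamma)D_{(\nu,e)}(q^{-\lambda}t,\gamma)$ inherited from the cocycle property of $C$.)

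The proof presents no genuine obstacle; the only point requiring care is the bookkeeping of singularities — that the polar divisor of $\Psi$ indeed decomposes into ``horizontal'' components inside $T\times\mathcal{S}_+$ and ``vertical'' components inside $\mathcal{S}_+^{-1}\times T$, and that each $D_{(\lambda,e)}(\,\cdot\,,\gamma)$ is holomorphic in $\gamma$, so that a slice over $\zeta\notin\mathcal{S}_+$ is transverse to the singular set. Both facts are already in hand (Proposition~\ref{PsiAnalyticB} and Lemma~\ref{asymptoticB}), so the corollary amounts to assembling them, in complete parallel with the $\textup{GL}_N$ treatment of \cite{MS}.
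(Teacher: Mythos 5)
Your proof is correct and takes the route the paper itself implicitly intends: the corollary is stated in the paper without an explicit proof, introduced only by ``From Proposition~\ref{PsiAnalyticB} and its corollary we obtain the following result,'' and you have simply supplied the straightforward deduction from those two results. In particular, your observation that the polar locus of $\Psi$ lies inside $(\mathcal{S}_+^{-1}\times T)\cup(T\times\mathcal{S}_+)$ so that the slice $\gamma=\zeta$ is transverse, together with the alternative construction via the recursion \eqref{PsiDPsiB} (exactly as in \cite[Lem.~5.7]{MS}), is the right bookkeeping, and the regularity of $D_{(\lambda,e)}$ in $\gamma$ via \eqref{convokB} and \eqref{DlmB} is the correct justification for restricting the gauged equations in part (iii).
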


\subsection{Consistency} BqKZ is a holonomic system of first-order
$q$-difference equations with connection matrices depending
rationally on $(t,\gamma)\in T\times T$ and therefore it is
consistent (see \cite[Prop. 5.2]{Et}). This means that
$\textup{dim}_{\mathbb{F}}(\textup{SOL})=\textup{dim}_{\C}(H_0)$,
or, equivalently, that BqKZ allows a so-called fundamental matrix
solution $U$. In \cite{Et}, such a fundamental matrix solution was
found by algebraic geometric arguments. A different approach,
using the asymptotic solution $\Phi$, was taken in \cite{MS}. Here
we shortly repeat this latter approach for arbitrary root systems.
The advantage of this approach is that it produces a basis of
$\textup{SOL}$ in terms of asymptotically free solutions. For
details we refer to \cite[\S5.6]{MS}.

We say that
$F\in\textup{End}(H_0)^\mathbb{K}=\mathbb{K}\otimes\textup{End}(H_0)$
is an $\textup{End}(H_0)$-valued solution of BqKZ, if
\[C_{(\lambda,\mu)}(t,\gamma)F(q^{-\lambda}t,q^{\mu}\gamma)=
F(t,\gamma),\qquad \lambda,\mu\in P^\vee,
\]
as $\textup{End}(H_0)$-valued meromorphic functions in
$(t,\gamma)\in T\times T$.

Define $U\in\textup{End}(H_0)^{\mathbb{K}}$ by
\begin{equation}
U\bigl(k(w)^{-1}T_{w_0}T_{w^{-1}}\bigr):=\tau(e,w)\Phi
\end{equation}
for $w\in W_0$.
\begin{prop}\label{basispropB}
We have\\
{\bf (i)} $U\in\textup{End}(H_0)^\mathbb{K}$ is an invertible solution of BqKZ
with values in $\textup{End}(H_0)$. In particular, identifying
$\textup{End}(H_0)^\mathbb{K}\simeq\textup{End}_\mathbb{K}(H_0^\mathbb{K})$
as $\mathbb{K}$-algebras, we have $U\in\textup{GL}_\mathbb{K}(H_0^\mathbb{K})$.\\
{\bf (ii)} $U^\prime\in\textup{End}(H_0)^{\mathbb{K}}$ is an
$\textup{End}(H_0)$-valued meromorphic solution of BqKZ if and
only if
$U^\prime=UF$ for some $F\in\textup{End}(H_0)^{\mathbb{F}}$.\\
{\bf (iii)} $U$, viewed as $\mathbb{K}$-linear endomorphism of
$H_0^{\mathbb{K}}$, restricts to an $\mathbb{F}$-linear isomorphism
$U\colon H_0^{\mathbb{F}}\rightarrow \textup{SOL}$.\\
{\bf (iv)} $\{\tau(e,w)\Phi\}_{w\in W_0}$ is an
$\mathbb{F}$-basis of $\textup{SOL}$.
\end{prop}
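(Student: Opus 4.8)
The plan is to derive (i)--(iii) formally once (iv) is known, and to prove (iv) by combining three facts. First, $\tau(e,w)\Phi\in\textup{SOL}$ for every $w\in W_0$: indeed $\Phi\in\textup{SOL}$ by construction, while $\textup{SOL}$ is a $\tau(\mathbb{W}_0)$-invariant $\mathbb{F}$-subspace of $H_0^{\mathbb{K}}$ and $(e,w)\in W_0\times W_0\subseteq\mathbb{W}_0$. Second, $\dim_{\mathbb{F}}\textup{SOL}=\#W_0$, which is exactly the consistency of BqKZ recorded above. Third, $\{\tau(e,w)\Phi\}_{w\in W_0}$ is $\mathbb{F}$-linearly independent; this is the only substantial point. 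Granting independence, these $\#W_0$ solutions form an $\mathbb{F}$-basis of $\textup{SOL}$, which is (iv). Since $T_{w_0}$ is invertible in $H_0$, left multiplication by $T_{w_0}$ permutes the linear basis $\{T_u\}_{u\in W_0}$ of $H_0$ (up to reindexing $u=w^{-1}$), so $\{k(w)^{-1}T_{w_0}T_{w^{-1}}\}_{w\in W_0}$ is again a $\mathbb{C}$-basis of $H_0$ (the $k(w)$ being nonzero), and $U$ is the $\mathbb{K}$-linear endomorphism of $H_0^{\mathbb{K}}$ carrying this basis to the $\mathbb{F}$-basis $\{\tau(e,w)\Phi\}_{w\in W_0}$ of $\textup{SOL}$.

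For the linear independence I would proceed as in \cite[\S5.6]{MS}, exhibiting the $\tau(e,w)\Phi$ as asymptotically free solutions with pairwise distinct leading exponents. For $w\in W_0$ the matrix $C_{(w,e)}$ is $\gamma$-independent (it is a product of operators $R_i(z;\gamma)$ with $1\le i\le N$, which are $\gamma$-independent; see \eqref{RiB}), so by the remark after Lemma \ref{YBlemB} the matrix $C_{(e,w)}(t,\gamma)=C_\iota\,C_{(w,e)}(\gamma^{-1},t^{-1})\,C_\iota$ is $t$-independent, and \eqref{actionexplB} gives
\[
\tau(e,w)\Phi(t,\gamma)=C_{(e,w)}(\gamma)\,\Phi(t,w^{-1}\gamma)=G_w(t,\gamma)\,\widetilde{\Psi}_w(t,\gamma),\qquad G_w(t,\gamma):=G(t,w^{-1}\gamma),
\]
with $\widetilde{\Psi}_w:=C_{(e,w)}(\gamma)\Psi(t,w^{-1}\gamma)$. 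By Corollary \ref{PsiGammaExpansionB}, $\widetilde{\Psi}_w$ has the $H_0$-valued power series expansion $\sum_{\alpha\in Q_+^\vee}C_{(e,w)}(\gamma)\Gamma_\alpha(w^{-1}\gamma)\,t^{-\alpha}$ on $B_\epsilon^{-1}\times w(T\setminus\mathcal{S}_+)$, with leading coefficient $C_{(e,w)}(\gamma)\Gamma_0(w^{-1}\gamma)=K(w^{-1}\gamma)\,C_{(e,w)}(\gamma)T_{w_0}$; this is not identically zero, since $K\not\equiv0$ (by Corollary \ref{PsiGammaExpansionB} together with Theorem \ref{asymTHMB}(ii) it has constant term $1$) and $C_{(e,w)}$ is invertible by Corollary \ref{CorCocB}. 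Thus each $\tau(e,w)\Phi$ is asymptotically free with prefactor $G_w$, and by Proposition \ref{GpropB}(ii)
\[
G_w(q^{-\lambda}t,q^{\mu}\gamma)=\delta_{\underline{k}}^{-\lambda-w^{-1}\mu}\,q^{-\langle w_0\lambda,w^{-1}\mu\rangle}\,t^{w_0w^{-1}\mu}\,\gamma^{-ww_0\lambda}\,G_w(t,\gamma);
\]
for distinct $w,w'\in W_0$ the $t$-exponents $w_0w^{-1}\mu$ (equivalently the $\gamma$-exponents $-ww_0\lambda$) differ for a suitable $\mu$ (resp. $\lambda$), so the $G_w$ have pairwise distinct $q$-multipliers. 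By the general theory of asymptotically free solutions of holonomic $q$-difference systems — the appendix of \cite{MS}, into which BqKZ fits via the identification in Theorem \ref{asymTHMB} — solutions with pairwise distinct leading exponents are linearly independent over the field $\mathbb{F}$ of $q$-dilation invariants; hence $\{\tau(e,w)\Phi\}_{w\in W_0}$ is $\mathbb{F}$-linearly independent, proving (iv).

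The remaining assertions are now formal. For (i): a relation $\sum_w g_w\tau(e,w)\Phi=0$ with $g_w\in\mathbb{K}$ becomes, after a $q$-shift and multiplication by the relevant connection matrix (using that the $\tau(e,w)\Phi$ solve BqKZ), a relation with coefficients $g_w(q^{-\lambda}t,q^{\mu}\gamma)$; subtracting and invoking minimality forces every $g_w$ to be $q$-dilation invariant, so the relation lies over $\mathbb{F}$ and is trivial. Thus $\{\tau(e,w)\Phi\}$ is $\mathbb{K}$-linearly independent, $U$ carries a $\mathbb{K}$-basis of $H_0^{\mathbb{K}}$ to a $\mathbb{K}$-basis, and $U\in\textup{GL}_{\mathbb{K}}(H_0^{\mathbb{K}})$; moreover $Uv$ is a $\mathbb{C}$-linear combination of the $\tau(e,w)\Phi\in\textup{SOL}$ for each $v\in H_0$, which is exactly the statement that $U$ is an $\textup{End}(H_0)$-valued solution of BqKZ. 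For (ii): if $U'=UF$ with $F\in\textup{End}(H_0)^{\mathbb{F}}$ then $C_{(\lambda,\mu)}U'(q^{-\lambda}t,q^{\mu}\gamma)=C_{(\lambda,\mu)}U(q^{-\lambda}t,q^{\mu}\gamma)F(q^{-\lambda}t,q^{\mu}\gamma)=U(t,\gamma)F(t,\gamma)=U'(t,\gamma)$ by $q$-dilation invariance of $F$; conversely for an $\textup{End}(H_0)$-valued solution $U'$ put $F:=U^{-1}U'$, and the two BqKZ relations yield $F(q^{-\lambda}t,q^{\mu}\gamma)=U(t,\gamma)^{-1}U'(t,\gamma)=F(t,\gamma)$, so $F\in\textup{End}(H_0)^{\mathbb{F}}$. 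For (iii): $U|_{H_0^{\mathbb{F}}}$ is $\mathbb{F}$-linear, maps into $\textup{SOL}$ (an $\mathbb{F}$-subspace containing each image $Uv_i$), is injective since $U$ is invertible over $\mathbb{K}\supseteq\mathbb{F}$, and is surjective because its image is an $\mathbb{F}$-subspace containing the basis $\{\tau(e,w)\Phi\}$ of $\textup{SOL}$ — equivalently, by the count $\dim_{\mathbb{F}}H_0^{\mathbb{F}}=\#W_0=\dim_{\mathbb{F}}\textup{SOL}$.

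I expect the main obstacle to be the linear independence, specifically the bookkeeping that realizes each $\tau(e,w)\Phi$ as an asymptotically free solution in the framework of \cite[\S8]{MS}: transporting Proposition \ref{PsiAnalyticB} and Corollary \ref{PsiGammaExpansionB} through the $(e,w)$-action to locate the region of convergence of $\widetilde{\Psi}_w$ and confirm the non-vanishing of its leading coefficient $K(w^{-1}\gamma)C_{(e,w)}(\gamma)T_{w_0}$, and then tracking the $w$-dependence of the $q$-multiplier of $G_w$ so as to see that the leading exponents are genuinely pairwise distinct. Everything else is formal manipulation with the cocycle property and the two-torus $q$-shift structure.
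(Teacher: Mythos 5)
Your proof is correct and follows the same approach the paper endorses: the paper gives no argument of its own for Proposition \ref{basispropB}, deferring to \cite[\S5.6]{MS}, and your reconstruction (realizing each $\tau(e,w)\Phi$ as an asymptotically free solution with prefactor $G_w$, checking non-vanishing of the leading coefficient via $K$ and the invertibility of $C_{(e,w)}$, noting the $G_w$ have pairwise distinct $q$-multipliers, and invoking the general theory from the appendix of \cite{MS}) is precisely the intended argument. The derivation of (i)--(iii) from (iv) is the standard formal bookkeeping and is carried out correctly.
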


\begin{rema}
The quantum KZ equations \eqref{qKZconcrB} form a consistent
system of $q$-difference equations as well. For generic $\zeta\in
T$ (that is, for $\zeta\in T$ where $\Phi(t,\gamma)$ can be
specialized in $\gamma=\zeta$ and moreover $U(\cdot,\zeta)$ is
invertible), this follows along the same line as above, but of
course one can use \cite[Prop. 5.2]{Et} again, which applies for
all $\zeta\in T$.
\end{rema}

\section{Correspondence with bispectral
problems}\label{SectionCorrespB}

For the principal series representation $M_\zeta$ ($\zeta$
generic) of $H$, Cherednik \cite[Thm. 3.4]{Cref} constructed a map
which embeds the associated solution space of the quantum affine
KZ equation \eqref{qKZCherB} into the solution space of a system
of $q$-difference equations, involving the Macdonald
$q$-difference operator. This is a special case of a
correspondence between the solutions of the quantum affine KZ
equations associated with an arbitrary finite-dimensional
$H$-module $M$ and a more general system of $q$-difference
equations (see \cite{CInd}).

We will consider the map when $M$ is the formal principal series
module $M=\textup{Ind}^{H}_{\C_Y[T]}(\C_Y[T])$ (see Subsection
\ref{subsecFormalB}). In this case Cherednik's correspondence
yields an embedding $\chi_+$ of $\textup{SOL}$ into the solution
space of a bispectral problem for the Macdonald $q$-difference
operators.

\subsection{The bispectral problem for the Macdonald $q$-difference operators}
Using the action of $\mathbb{W}$ on $\C(T\times T)$ given by
\eqref{doubleactionB}, we can form the smash product algebra
$\C(T\times T)\#\mathbb{W}$. It contains $\C(T)\#_q W\simeq
\C(T\times\{1\})\#(W\times\{e\})$ and $\C(T)\#_{q^{-1}}W\simeq
\C(\{1\}\times T)\#(\{e\}\times W)$ as subalgebras. In this
interpretation, Cherednik's algebra homomorphism
$\rho_{\underline{k}^{-1},q}\colon H(k^{-1})\to\C(T)\#_q W$ (see
Theorem \ref{CherednikB}) gives rise to an algebra homomorphism
\[
\rho^{x}_{\underline{k}^{-1},q}\colon
H(\underline{k}^{-1})\to\C(T\times T)\#\mathbb{W},
\]
considered as $q$-difference reflection operators in the first
torus variable, and similarly $\rho_{\underline{k},q^{-1}}\colon
H(\underline{k})\to\C(T)\#_{q^{-1}}W$ to an algebra homomorphism
\[
\rho^{y}_{\underline{k},q^{-1}}\colon
H(\underline{k})\to\C(T\times T)\#\mathbb{W},
\]
considered as $q$-difference reflection operators in the second
torus variable. Note that the images of
$\rho^{x}_{\underline{k}^{-1},q}$ and
$\rho^{y}_{\underline{k},q^{-1}}$ in $\C(T\times T)\#\mathbb{W}$
commute, so we can form the algebra homomorphism
\[
\rho^{x}_{\underline{k}^{-1},q}\otimes\rho^{y}_{\underline{k},q^{-1}}\colon
H(\underline{k}^{-1})\otimes H(\underline{k})\to\C(T\times
T)\#\mathbb{W}.
\]
The maps $\rho^{x}_{\underline{k}^{-1},q}$ and
$\rho^{y}_{\underline{k},q^{-1}}$ are related as follows.
\begin{lem}\label{lemrhodualityB}
Let ${}^\circ\colon H(\underline{k}^{-1})\to H(\underline{k})$ be
defined as the unique algebra isomorphism satisfying
\[
T_i^\circ= T_i^{-1},\quad \omega^\circ=\omega,
\]
for $0\leq i\leq N$ and $\omega\in\Omega$. Then we have
\begin{equation}\label{rhodualityB}
\rho^{y}_{\underline{k},q^{-1}}(h^\circ)=
\iota\rho^{x}_{\underline{k}^{-1},q}(h)\iota
\end{equation}
for all $h\in H(\underline{k}^{-1})$.
\end{lem}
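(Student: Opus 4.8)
The plan is to verify the identity \eqref{rhodualityB} on a set of algebra generators of $H(\underline{k}^{-1})$, namely on $T_0,\ldots,T_N$ and on $\omega\in\Omega$, and then appeal to the fact that ${}^\circ$ is an algebra homomorphism while conjugation by $\iota$ is an algebra automorphism of $\C(T\times T)\#\mathbb{W}$, so that both sides of \eqref{rhodualityB} are algebra homomorphisms $H(\underline{k}^{-1})\to\C(T\times T)\#\mathbb{W}$ and agreement on generators suffices. The key input is the explicit action of $\iota$: by definition $\iota\in\mathbb{W}$ acts on $T\times T$ by $\iota(t,\gamma)=(\gamma^{-1},t^{-1})$, hence on functions it interchanges the two torus variables and inverts them, i.e. $\iota f(x)\iota = (f^\vee)(y)$ and $\iota f(y)\iota = (f^\vee)(x)$ where $f^\vee(z):=f(z^{-1})$; and on group elements it sends $(w,e)\mapsto(e,w)$, in particular it conjugates a $q$-difference reflection operator $s_i$ acting in the $x$-variable to the corresponding reflection $s_i$ acting in the $y$-variable. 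One must be a little careful about the $q$ versus $q^{-1}$ bookkeeping: inverting the torus variable turns a $q$-shift into a $q^{-1}$-shift, which is exactly why $\rho^x$ is built from $\rho_{\underline{k}^{-1},q}$ while $\rho^y$ is built from $\rho_{\underline{k},q^{-1}}$, so the two sides will match rather than mismatch.

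Concretely, for $\omega\in\Omega$ the statement is immediate: $\omega^\circ=\omega$, $\rho^x_{\underline{k}^{-1},q}(\omega)=(\omega,e)$, $\rho^y_{\underline{k},q^{-1}}(\omega)=(e,\omega)$, and $\iota(\omega,e)\iota=(e,\omega)$ by the semidirect product structure of $\mathbb{W}=\Z_2\ltimes(W\times W)$. For the $T_i$, recall from Theorem \ref{CherednikB} that $\rho_{\underline{k}^{-1},q}(T_i)=k_i^{-1}+c_{a_i;\underline{k}^{-1},q}(X)(s_i-1)$, so $\rho^x_{\underline{k}^{-1},q}(T_i)=k_i^{-1}+c_{a_i;\underline{k}^{-1},q}(x)(s_i^{(x)}-1)$ where $s_i^{(x)}$ is the reflection in the first torus. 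Conjugating by $\iota$ and using $\iota f(x)\iota=f^\vee(y)$ together with $\iota s_i^{(x)}\iota = s_i^{(y)}$ gives
\[
\iota\rho^x_{\underline{k}^{-1},q}(T_i)\iota
= k_i^{-1}+c_{a_i;\underline{k}^{-1},q}(x^{-1})\big|_{x\to y}\,(s_i^{(y)}-1).
\]
Now invoke the parameter–inversion identity \eqref{cParamRelB}, $c_{a;\underline{k},q}(t^{-1})=c_{a;\underline{k}^{-1},q^{-1}}(t)$, applied with $\underline{k}$ replaced by $\underline{k}^{-1}$: it yields $c_{a_i;\underline{k}^{-1},q}(t^{-1})=c_{a_i;\underline{k},q^{-1}}(t)$. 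Hence the displayed operator equals $k_i^{-1}+c_{a_i;\underline{k},q^{-1}}(y)(s_i^{(y)}-1)$. On the other side, $T_i^\circ=T_i^{-1}$, and one computes from $(T_i-k_i)(T_i+k_i^{-1})=0$ that $T_i^{-1}=T_i-(k_i-k_i^{-1})=T_i+k_i^{-1}-k_i$; applying $\rho^y_{\underline{k},q^{-1}}$, which sends $T_i\mapsto k_i+c_{a_i;\underline{k},q^{-1}}(y)(s_i^{(y)}-1)$, gives $\rho^y_{\underline{k},q^{-1}}(T_i^\circ)=k_i^{-1}+c_{a_i;\underline{k},q^{-1}}(y)(s_i^{(y)}-1)$, matching the left-hand side. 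This establishes \eqref{rhodualityB} on generators.

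The main obstacle, and the point requiring the most care, is precisely the triple of bookkeeping substitutions made simultaneously in the $c$-functions: the torus inversion $t\mapsto t^{-1}$ coming from the geometric action of $\iota$, the parameter inversion $\underline{k}\mapsto\underline{k}^{-1}$ built into $\rho^x$, and the $q\mapsto q^{-1}$ flip built into $\rho^y$. One has to check that the coordinate conventions $t^{\alpha+rc}=q^r t^\alpha$ versus $t^{\alpha+rc}=q^{-r}t^\alpha$ for the two copies of $T$ are consistent with the $q$ appearing in \eqref{cParamRelB}, and that $\iota$ genuinely conjugates the $q$-shift operator in the $x$-variable to the $q^{-1}$-shift operator in the $y$-variable (not the other way around) — this is where the sign in $\iota(t,\gamma)=(\gamma^{-1},t^{-1})$ does the work. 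Once these are pinned down the computation is routine; it is essentially the verification already carried out in \cite[\S6]{MS} for $\textup{GL}_N$, and the argument here is identical modulo notation, so I would state the generator-by-generator check and refer to \eqref{cParamRelB} and the description of the $\mathbb{W}$-action for the remaining details.
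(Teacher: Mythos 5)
Your proposal is correct and follows essentially the same route as the paper: reduce to checking \eqref{rhodualityB} on the algebra generators $T_0,\ldots,T_N$ and $\omega\in\Omega$, compute $T_i^\circ = T_i^{-1}=T_i+k_i^{-1}-k_i$ from the quadratic relation, explicitly conjugate Cherednik's formula for $\rho(T_i)$ by $\iota$, and conclude via the parameter-inversion identity \eqref{cParamRelB}. The only stylistic difference is that the paper carries out the computation by evaluating both sides on a test function $f\in\mathbb{K}$ (so the reflection $(e,s_i)$ appears concretely as $\gamma\mapsto s_i^\diamond\gamma$), whereas you phrase it via the conjugation identities $\iota f(x)\iota=f^\vee(y)$ and $\iota(s_i,e)\iota=(e,s_i)$; you also spell out the trivial $\omega$ case that the paper leaves to the reader.
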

\begin{proof}
Since $\rho^{x}_{\underline{k}^{-1},q}$,
$\rho^{y}_{\underline{k},q^{-1}}$ and ${}^\circ$ are algebra
homomorphisms, the lemma follows by verifying \eqref{rhodualityB}
for $T_i$ ($0\leq i\leq N$) and $\omega\in\Omega$. Let $0\leq
i\leq N$ and $f\in\mathbb{K}$. In $H(\underline{k})$, we have
$T^{-1}_i=T_i+k_i^{-1}-k_i$, hence, on the one hand,
\[
\big(\rho^y_{\underline{k},q^{-1}}(T_i^{-1})f\big)(t,\gamma)=
k_i^{-1}f(t,\gamma)+
c_{a_i;\underline{k},q^{-1}}(\gamma)\big(f(t,s_i^\diamond\gamma)-f(t,\gamma)
\big).
\]
On the other hand,
\[
\begin{split}
\big(\iota\rho^x_{\underline{k}^{-1},q}&(T_i)\iota f\big)(t,\gamma)=
\big(\rho^x_{\underline{k}^{-1},q}(T_i)\iota f\big)(\gamma^{-1},t^{-1})\\
&=k_i^{-1}(\iota f) (\gamma^{-1},t^{-1})+
c_{a_i;\underline{k}^{-1},q}(\gamma^{-1})\big((\iota f)
(s_i\gamma^{-1},t^{-1})-(\iota f)(\gamma^{-1},t^{-1})\big)\\
&=k_i^{-1}f(t,\gamma)+c_{a_i;\underline{k},q^{-1}}(\gamma)
\big( f(t,s^\diamond_i\gamma)-f(t,\gamma)\big),
\end{split}
\]
where we used \eqref{cParamRelB} for the last equality. The
verification for $\omega\in\Omega$ is easier and left to the
reader.
\end{proof}

By means of the canonical action of $\C(T\times T)\#\mathbb{W}$ on
$\C(T\times T)$, the subalgebra $\mathbb{D}:=\C(T\times T)\#
(P^\vee\times P^\vee)\subset\C(T\times T)\#\mathbb{W}$ can be
identified with the algebra of $q$-difference operators on
$T\times T$ with rational coefficients. Any element
$D\in\C(T\times T)\#\mathbb{W}$ has an expansion
\begin{equation}\label{DexpansionB}
D=\sum_{\mathrm{w}\in\mathbb{W}_0}D_{\mathrm{w}}\mathrm{w},
\end{equation}
with $D_\mathrm{w}\in\mathbb{D}$. Since this expansion is unique,
we have a well-defined $\C(T\times T)$-linear map
$\textup{Res}\colon\C(T\times T)\#\mathbb{W}\to\mathbb{D}$,
determined by
\[
\textup{Res}(D):=\sum_{\mathrm{w}\in\mathbb{W}_0}D_{\mathrm{w}},
\]
with $D\in\C(T\times T)\#\mathbb{W}$ given as in
\eqref{DexpansionB}. Let $\C(T\times T)^{\mathbb{W}_0}$ denote the
field of $\mathbb{W}_0$-invariant rational functions on $T\times
T$. Restricted to $\C(T\times T)^{\mathbb{W}_0}$, we have
$D|_{\C(T\times T)^{\mathbb{W}_0}}=\textup{Res}(D)|_{\C(T\times
T)^{\mathbb{W}_0}}$ for all $D\in\C(T\times T)\#\mathbb{W}$.

It is well-known (see, e.g., \cite[(4.2.10)]{M}) that the center
$Z(H)$ of the affine Hecke algebra $H$ is $\C_Y[T]^{W_0}$. For
$p\in\C[T]^{W_0}$, set
\[
L_p^x:=\textup{Res}(\rho^{x}_{\underline{k}^{-1},q}(p(Y)))\in\mathbb{D},
\]
where $p(Y)$ is considered as element of $Z(H(\underline{k}^{-1}))$, and set
\[
L_p^y:=\textup{Res}(\rho^{y}_{\underline{k},q^{-1}}(p(Y)))\in\mathbb{D},
\]
where $p(Y)$ is considered as element of $Z(H(\underline{k}))$. It
is well-known that the operators $L_p^x$ (and hence $L_p^y$) are
pairwise commuting and $(W_0\times W_0)$-invariant, and by
construction $[L_p^x,L_{p'}^y]=0$ in $\mathbb{D}$ for all
$p,p'\in\C[T]^{W_0}$. The operators $L_p^x$ and $L_p^y$ are
related as follows.
\begin{lem}
For $p\in\C[T]^{W_0}$, we have
\begin{equation}\label{LdualityB}
L_p^y=\iota L_p^x\iota.
\end{equation}
\end{lem}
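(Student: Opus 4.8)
The plan is to combine Lemma~\ref{lemrhodualityB} with an elementary compatibility between $\textup{Res}$ and conjugation by $\iota$. First I would record that $\iota$ normalizes both the subgroup $\mathbb{W}_0\subset\mathbb{W}$ (since $\iota(w,w')\iota=(w',w)$) and the subalgebra $\mathbb{D}\subset\C(T\times T)\#\mathbb{W}$. Writing an arbitrary $D=\sum_{\mathrm{w}\in\mathbb{W}_0}D_{\mathrm{w}}\mathrm{w}$ with $D_{\mathrm{w}}\in\mathbb{D}$ and using uniqueness of this expansion, one gets $\iota D\iota=\sum_{\mathrm{w}}(\iota D_{\mathrm{w}}\iota)(\iota\mathrm{w}\iota)$, hence $\textup{Res}(\iota D\iota)=\iota\,\textup{Res}(D)\,\iota$ for every $D\in\C(T\times T)\#\mathbb{W}$. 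Applying this to $D=\rho^{x}_{\underline{k}^{-1},q}(p(Y))$ and then invoking Lemma~\ref{lemrhodualityB} with $h=p(Y)\in Z(H(\underline{k}^{-1}))$ yields
\[
\iota L_p^x\iota=\textup{Res}\bigl(\iota\,\rho^{x}_{\underline{k}^{-1},q}(p(Y))\,\iota\bigr)
=\textup{Res}\bigl(\rho^{y}_{\underline{k},q^{-1}}\bigl((p(Y))^{\circ}\bigr)\bigr),
\]
where $(p(Y))^{\circ}\in H(\underline{k})$ denotes the image of $p(Y)\in H(\underline{k}^{-1})$ under the isomorphism ${}^{\circ}$ of Lemma~\ref{lemrhodualityB}.

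It therefore remains to show that ${}^{\circ}$ fixes the center pointwise, i.e. that $(p(Y))^{\circ}=p(Y)$ in $Z(H(\underline{k}))=\C_Y[T]^{W_0}$ for every $p\in\C[T]^{W_0}$; granting this, the displayed identity becomes $\iota L_p^x\iota=\textup{Res}(\rho^{y}_{\underline{k},q^{-1}}(p(Y)))=L_p^y$, which is the claim. Since ${}^{\circ}$ is an algebra isomorphism it carries $Z(H(\underline{k}^{-1}))$ onto $Z(H(\underline{k}))$, so $p\mapsto(p(Y))^{\circ}$ defines an algebra automorphism $\sigma$ of $\C[T]^{W_0}$, and I must prove $\sigma=\textup{id}$. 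One route is to evaluate $\sigma$ on the orbit-sum generators $m_{\varpi_i^\vee}=\sum_{\mu\in W_0\varpi_i^\vee}x^\mu$: from the reduced-expression identity $(Y^\lambda)^{\circ}=(T_{\textup{t}(-\lambda)})^{-1}$ for dominant $\lambda$, together with Lusztig's relations \eqref{eqLusztigB}, one computes $(Y^\mu)^{\circ}$ for each $\mu$ in the $W_0$-orbit and checks that the terms lying outside $\C_Y[T]$ cancel in the sum, leaving $\sum_{\mu}(Y^\mu)^{\circ}=\sum_{\mu}Y^\mu=m_{\varpi_i^\vee}(Y)$. A cleaner alternative is representation-theoretic: for generic $\zeta\in T$ the principal series $M_\zeta$ of $H(\underline{k})$ is irreducible, its pullback $(M_\zeta)^{\circ}$ along ${}^{\circ}$ is an irreducible $H(\underline{k}^{-1})$-module, and a short computation of its $Y$-spectrum shows it again has central character $\zeta$; since $p(Y)$ acts on $(M_\zeta)^{\circ}$ both by the scalar $\sigma(p)(\zeta)$ and by $p(\zeta)$, one concludes $\sigma(p)=p$.

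The main obstacle is precisely this last step, the identification of $(p(Y))^{\circ}$. The delicate point is that ${}^{\circ}$ does \emph{not} preserve the commutative subalgebra $\C_Y[T]$ itself (already $(Y^{\lambda})^{\circ}\notin\C_Y[T]$ in rank one, the correction being a multiple of $T_\omega$), only its $W_0$-invariant part, so one genuinely has to symmetrize and verify that these corrections disappear. Everything else — the normalization of $\textup{Res}$, its compatibility with $\iota$, and the passage through Lemma~\ref{lemrhodualityB} — is routine bookkeeping parallel to the $\textup{GL}_N$ treatment in \cite{MS}.
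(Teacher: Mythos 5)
Your reduction of the lemma to the identity $(p(Y))^{\circ}=p(Y)$ for $p\in\C[T]^{W_0}$ matches the paper's strategy (the paper labels this \eqref{circlemmaB}), and the bookkeeping around $\textup{Res}$ and conjugation by $\iota$ is sound. But neither of your two routes actually proves \eqref{circlemmaB}. Route (a), symmetrizing $(Y^\mu)^{\circ}$ over each $W_0$-orbit and checking that the terms outside $\C_Y[T]$ cancel, is only a plan: to compute $(Y^\mu)^{\circ}$ for non-dominant $\mu$ one has to push through Lusztig's relations repeatedly, and the correction terms proliferate in a way that is not obviously tractable for arbitrary type. Route (b) is circular: to conclude that $(M_\zeta)^{\circ}$ has central character $\zeta$ you must know the generalized $Y$-spectrum of $(M_\zeta)^{\circ}$, i.e. how $(Y^\lambda)^{\circ}$ acts on $M_\zeta$, but that is exactly the information encoded by the automorphism $\sigma$ you are trying to pin down, and there is no independent handle on the central character of $(M_\zeta)^{\circ}$ (all generic principal series modules are isomorphic as $H_0$-modules, for instance).

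The paper's proof of \eqref{circlemmaB} instead establishes the sharper intermediate identity $(Y^\lambda)^{\circ}=T_{w_0}Y^{w_0(\lambda)}T_{w_0}^{-1}$ for \emph{all} $\lambda\in P^\vee$, from which \eqref{circlemmaB} is immediate: for $p\in\C[T]^{W_0}$ one gets $(p(Y))^\circ=T_{w_0}(w_0p)(Y)T_{w_0}^{-1}=T_{w_0}p(Y)T_{w_0}^{-1}=p(Y)$ by $W_0$-invariance of $p$ and centrality of $p(Y)$. Since the set of $\lambda$ for which the intermediate identity holds is a subgroup of $P^\vee$, it suffices to verify it on a generating set; the paper takes the representatives $\varpi_j^\vee$ ($j\in J$) of $P^\vee/Q^\vee$ supplied by $\Omega$ together with the orbit $W_0(\phi^\vee)$ generating $Q^\vee$, and uses the explicit formulas \cite[(3.3.3), (3.3.6)]{M}. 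To complete your proof you would need to replace both sketches with an argument of this kind; as written, the key step remains a genuine gap.
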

\begin{proof}
Similarly as for $\textup{GL}_N$ (see \cite[\S6.2]{MS}), the lemma
follows from \eqref{rhodualityB} together with the fact that
\begin{equation}\label{circlemmaB}
p(Y)^\circ=p(Y),\qquad p\in\C[T]^{W_0}
\end{equation}
with ${}^\circ\colon H(\underline{k}^{-1})\to H(\underline{k})$ as
defined in Lemma \ref{lemrhodualityB}. We elaborate on the proof
of \eqref{circlemmaB}, which is different than for
$\textup{GL}_N$. Note that since $p\in\C[T]^{W_0}$, the result
follows if we can prove that
$(Y^\lambda)^\circ=T_{w_0}Y^{w_0(\lambda)}T_{w_0}^{-1}$ for
$\lambda\in P^\vee$. Moreover, it suffices to show this only for
specific elements of $P^\vee$, as we demonstrate first. For any
$\lambda\in P^\vee$, let $v_\lambda$ be the shortest element of
$W_0$ such that $v_\lambda(\lambda)=w_0(\lambda)$, and put
$u_\lambda:=\textup{t}(\lambda)v_\lambda^{-1}$. Then by
\cite[(2.5.4)]{M} $\Omega=\{e\}\cup\{u_{\varpi_j^\vee}\}_{j\in J}$
with $J:=\{i\in
1,\ldots,N\mid\langle\varpi_i^\vee,\phi\rangle=1\}$. If
$\lambda\in P^\vee\setminus Q^\vee$, we can write
$\textup{t}(\lambda)=u_{\varpi_j^\vee}w$ for some $j\in J$ and
$w\in W_{Q^\vee}$ (using $W=\Omega\ltimes W_{Q^\vee}$), and then
$\textup{t}(\lambda)=\textup{t}(\varpi_j^\vee)v_{\varpi_j^\vee}^{-1}w=
\textup{t}(\varpi_j^\vee)\textup{t}(\alpha)w'$ for some $\alpha\in
Q^\vee$ and $w'\in W_0$ (using $W_{Q^\vee}=Q^\vee\rtimes W_0$).
But then $w'=e$ and $\lambda=\varpi_j^\vee+\alpha$. In particular,
$\{0\}\cup\{\varpi_j^\vee\}_{j\in J}$ is a complete set of
representatives of $P^\vee/Q^\vee$. Since
$Q^\vee=\textup{span}_\Z\{w(\phi^\vee)\mid w\in W_0\}$, it thus
suffices to show
$(Y^\lambda)^\circ=T_{w_0}Y^{w_0(\lambda)}T_{w_0}^{-1}$ only for
$\lambda=\varpi_j^\vee$ with $j\in J$ and for
$\lambda=w(\phi^\vee)$ ($w\in W_0$).

Let $j\in J$ and write $u_j:=u_{\varpi_j^\vee}$ and
$v_j:=v_{\varpi_j^\vee}$. By \cite[(3.3.3)]{M}, we have
$u_j=T_wY^{w^{-1}(\varpi_j^\vee)} T^{-1}_{v_jw}$ for all $w\in
W_0$. Let ${}^\bullet\colon H(\underline{k})\to
H(\underline{k}^{-1})$ denote the inverse of ${}^\circ$. It
follows that
\[
\begin{split}
(Y^{w_0(\varpi_j^\vee)})^\bullet&=(T_{w_0}^{-1}u_j
T_{v_j w_0})^\bullet=T_{w_0}u_j T_{w_0 v^{-1}_j}^{-1}\\
&=T_{w_0}u_j(T_{w_0}T_{v_j}^{-1})^{-1}=T_{w_0}u_j T_{v_j}T_{w_0}^{-1}\\
&=T_{w_0}Y^{\varpi_j^\vee}T_{w_0}^{-1},
\end{split}
\]
since
$u_jT_{v_j}=T_{u_jv_j}=T_{\textup{t}(\varpi_j^\vee)}=Y^{\varpi_j^\vee}$.
Hence
$(Y^{\varpi_j^\vee})^\circ=T_{w_0}Y^{w_0(\varpi_j^\vee)}T_{w_0}^{-1}$.
Similarly, we can use \cite[(3.3.6)]{M} to obtain
$(Y^{w(\phi^\vee)})^\circ=T_{w_0}Y^{w_0w(\phi^{\vee})}T_{w_0}^{-1}$
for $w\in W_0$, and the proof is complete.
\end{proof}

In order to give more explicit formulas for $L_p^x$ and $L_p^y$,
we need to introduce some notation. For $\lambda\in P^\vee$, write
$W_{0,\lambda}$ for the isotropy subgroup of $\lambda$ in $W_0$,
and $W_0^\lambda$ for a complete set of representatives of
$W_0/W_{0,\lambda}$. We may assume that $e\in W_0^\lambda$. Let
$m_\lambda\in\C[T]^{W_0}$ be the associated monomial symmetric
function, that is, $m_\lambda(t):=\sum_{\mu\in W_0\lambda}t^\mu$.
Finally, set $\Sigma^0(\lambda):=\Sigma(\lambda)-W_0\lambda$
(recall that $\Sigma(\lambda)$ is the smallest saturated subset of
$P^\vee$ that contains $\lambda$, cf. subsection
\ref{subsecFormalB}).

Now fix $\lambda\in P^\vee_-=-P_+^\vee$. By \cite[(4.4.12)]{M}, we
have for $f\in\mathbb{K}$
\begin{equation}\label{LxFormulaB}
\begin{split}
(L^x_{m_\lambda}f)(t,\gamma)=&\sum_{w\in W_0^\lambda} \prod_{a\in
S(\textup{t}(-\lambda))}c_{w(a),\underline{k},q}
(t^{-1})f(q^{-w(\lambda)}t,\gamma)\:+\\
&\sum_{\mu\in\Sigma^0(\lambda)}g_\mu(t)f(q^{-\mu}t,\gamma)
\end{split}
\end{equation}
for some $g_\mu\in\mathcal{M}(T)$ (here we used \eqref{cParamRelB}). In
view of \eqref{LdualityB}, one immediately obtains a similar
formula for $L_{m_\lambda}^y$.

\begin{rema}
For $\lambda=w_0(\varpi_j^\vee)$ with $\varpi_j^\vee$ minuscule
(that is, $\langle\varpi_j^\vee,\alpha\rangle\in\{0,1\}$ for all
$\alpha\in R_+$), we have $\Sigma^0(\lambda)=\emptyset$, while for
$\lambda=-\phi^\vee$ we have $\Sigma^0(\lambda)=\{0\}$. In both
cases one obtains an explicit formula for $L_{m_\lambda}^x$ and
the resulting operators are the Macdonald $q$-difference operators
\cite{M5}.
\end{rema}

We now define the following bispectral version of Macdonald's
eigenvalue problem.
\begin{defi}
We define \textup{BiSP} as the set of solutions $f\in\mathbb{K}$ of the
following bispectral problem:
\begin{equation}\label{BiSPB}
\begin{split}
(L_p^xf)(t,\gamma)&=p(\gamma^{-1})f(t,\gamma),\qquad\forall p\in\C[T]^{W_0},\\
(L_p^yf)(t,\gamma)&=p(t)f(t,\gamma),\qquad\quad\forall p\in\C[T]^{W_0}.
\end{split}
\end{equation}
\end{defi}

\begin{rema}\label{remBiSPB}
Note that $\textup{BiSP}$ is a $\mathbb{W}_0$-invariant
$\mathbb{F}$-linear subspace of $\mathbb{K}$.
\end{rema}

\subsection{The correspondence}
Consider the linear map $\chi_+\colon H_0\to\C$ defined by
$\chi_+(T_w)=k(w)$. By $\mathbb{K}$-linear extension we obtain a
$\mathbb{K}$-linear map $\chi_+\colon
H_0^\mathbb{K}\to\mathbb{K}$. It gives rise to the following
correspondence between $\textup{SOL}$ and $\textup{BiSP}$.
\begin{thm}\label{CherMatB}
The $\mathbb{K}$-linear functional $\chi_+\colon H_0^\mathbb{K}\to\mathbb{K}$
restricts to an injective $\mathbb{W}_0$-equivariant
$\mathbb{F}$-linear map
\[
\chi_+\colon\textup{SOL}\to\textup{BiSP}.
\]
\end{thm}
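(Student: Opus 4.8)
The plan is to verify, in order, that $\chi_+$ is $\mathbb{W}_0$-equivariant, that it maps $\textup{SOL}$ into $\textup{BiSP}$, and that it is injective on $\textup{SOL}$; the $\mathbb{F}$-linearity is automatic since $\chi_+$ is $\mathbb{K}$-linear and $\mathbb{F}\subset\mathbb{K}$. The backbone of the equivariance is the identity $\chi_+\circ C_{\mathrm{w}}=\chi_+$ of maps $H_0^{\mathbb{K}}\to\mathbb{K}$, valid for all $\mathrm{w}\in\mathbb{W}_0$. By the cocycle relation $C_{\mathrm{w}\mathrm{w}'}(t,\gamma)=C_{\mathrm{w}}(t,\gamma)C_{\mathrm{w}'}(\mathrm{w}^{-1}(t,\gamma))$ from Corollary \ref{CorCocB} it suffices to check this on the generators $\iota$, $(s_i,e)$ and $(e,s_i)$ ($1\le i\le N$) of $\mathbb{W}_0$. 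Note that $\chi_+$ is in fact the algebra character of $H_0$ determined by $T_i\mapsto k_i$ (equivalently $T_w\mapsto k(w)$); since $C_\iota$ is the $\mathbb{K}$-linear extension of the anti-involution $T_w\mapsto T_{w^{-1}}$ (Lemma \ref{YBlemB}(i)), the composite $\chi_+\circ C_\iota$ is again the character $T_i\mapsto k_i$, hence $\chi_+\circ C_\iota=\chi_+$. For $(s_i,e)$, recall $C_{(s_i,e)}(t,\gamma)=R_i(t^{a^\vee_i};\gamma)$ with $R_i(z;\gamma)=c(z;k_i)^{-1}(\eta(T_i)(\gamma)-b(z;k_i))$ (Lemma \ref{YBlemB}(i)); by \eqref{formulaTIB} the operator $\eta(T_i)$ acts on $H_0$ as left multiplication by $T_i$, so $\chi_+(\eta(T_i)v)=k_i\chi_+(v)$, and since $b_{a_i}=k_i-c_{a_i}$ this gives $\chi_+(R_i(z;\gamma)v)=c(z;k_i)^{-1}(k_i-b(z;k_i))\chi_+(v)=\chi_+(v)$, i.e.\ $\chi_+\circ C_{(s_i,e)}=\chi_+$; the case $(e,s_i)$ follows using $C_{(e,w)}(t,\gamma)=C_\iota C_{(w,e)}(\gamma^{-1},t^{-1})C_\iota$ (the Remark after Lemma \ref{YBlemB}). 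Given $\chi_+\circ C_{\mathrm{w}}=\chi_+$, formula \eqref{actionexplB} immediately yields $\chi_+(\tau(\mathrm{w})f)=\mathrm{w}\,\chi_+(f)$ for $\mathrm{w}\in\mathbb{W}_0$ and $f\in H_0^{\mathbb{K}}$; combined with the $\tau(\mathbb{W}_0)$-invariance of $\textup{SOL}$ and the $\mathbb{W}_0$-invariance of $\textup{BiSP}$ (Remark \ref{remBiSPB}), this is the asserted equivariance once the inclusion $\chi_+(\textup{SOL})\subseteq\textup{BiSP}$ is established.

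For that inclusion, fix $f\in\textup{SOL}$. For every $\zeta\in T$ outside the polar set of $f$, the specialization $f(\,\cdot\,,\zeta)\in H_0^{\mathcal{M}(T)}$ is well defined, and by the $(\lambda,e)$-components of \eqref{BqKZeqnB} together with $C_{(\lambda,e)}(\,\cdot\,,\zeta)=F^{M_\zeta}_\lambda$ (Remark \ref{remtauxVStau}) it lies in $\textup{SOL}_\zeta$. Cherednik's qKZ/Macdonald correspondence for the minimal principal series module $M_\zeta$ (\cite[Thm.~3.4]{Cref}, \cite[Thm.~4.6]{Ka}, \cite{CInd}) --- which, with the parameter conventions fixed here for $L^x_p$ via $\rho^x_{\underline{k}^{-1},q}$, sends a solution of the qKZ equations for $M_\zeta$ to a solution of $L^x_p g=p(\zeta^{-1})g$ for all $p\in\C[T]^{W_0}$ --- then gives, using $\chi_+(f(\,\cdot\,,\zeta))=\chi_+(f)(\,\cdot\,,\zeta)$, that $(L^x_p\chi_+(f))(\,\cdot\,,\zeta)=p(\zeta^{-1})\,\chi_+(f)(\,\cdot\,,\zeta)$ for all such $\zeta$. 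Since $L^x_p$ is a $q$-difference operator in the first torus variable only, with coefficients depending only on that variable (cf.\ \eqref{LxFormulaB}), meromorphic continuation in $\gamma$ yields $L^x_p\chi_+(f)=p(\gamma^{-1})\chi_+(f)$ in $\mathbb{K}$. For the $L^y_p$-equations I would apply what was just proved to $\tau(\iota)f$, which again lies in $\textup{SOL}$ because $\iota\in\mathbb{W}_0$; together with the equivariance $\chi_+(\tau(\iota)f)=\iota\,\chi_+(f)$ this gives $L^x_p(\iota\,\chi_+(f))=p(\gamma^{-1})\,\iota\,\chi_+(f)$, and now applying $\iota$, using the duality $L^y_p=\iota L^x_p\iota$ of \eqref{LdualityB} and $\iota\bigl(p(\gamma^{-1})\cdot\iota\,\chi_+(f)\bigr)=p(t)\cdot\chi_+(f)$, one obtains $L^y_p\chi_+(f)=p(t)\chi_+(f)$. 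Hence $\chi_+(f)\in\textup{BiSP}$.

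For injectivity, suppose $f\in\textup{SOL}$ with $\chi_+(f)=0$. For every $\zeta$ outside the polar set of $f$ we have $f(\,\cdot\,,\zeta)\in\textup{SOL}_\zeta$ and $\chi_+(f(\,\cdot\,,\zeta))=\chi_+(f)(\,\cdot\,,\zeta)=0$, and since Cherednik's correspondence is injective on $\textup{SOL}_\zeta$ for generic $\zeta$ (being an isomorphism onto the solution space of the Macdonald eigenvalue system), $f(\,\cdot\,,\zeta)=0$ for a dense set of $\zeta$, so $f=0$ by meromorphicity. (Alternatively one may argue as for $\textup{GL}_N$ in \cite{MS}: $\chi_+(\Phi)=G\,\chi_+(\Psi)$ with $\chi_+(\Psi)=k(w_0)+(\text{higher-order terms})$ and $k(w_0)\ne0$, so $\chi_+$ sends the $\mathbb{F}$-basis $\{\tau(e,w)\Phi\}_{w\in W_0}$ of $\textup{SOL}$ from Proposition \ref{basispropB}(iv) to the $\mathbb{F}$-linearly independent family $\{(e,w)\chi_+(\Phi)\}_{w\in W_0}$.) I expect the main obstacle to be the middle step, namely transporting the single-spectral-parameter qKZ/Macdonald correspondence of \cite{Cref,Ka,CInd} --- with its parameter bookkeeping --- correctly to the present formal, bispectral setting; by contrast, once the $L^x_p$-equations are available the $L^y_p$-equations follow essentially for free from the $\iota$-duality built into BqKZ.
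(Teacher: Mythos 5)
Your proof is correct and follows essentially the same route as the paper: the inclusion $\chi_+(\textup{SOL})\subseteq\textup{BiSP}$ is obtained by the same two-step argument (specialize $\gamma$, invoke Cherednik's qKZ/Macdonald correspondence for $M_\zeta$ to get the $L^x_p$-equations, then use $L_p^y=\iota L_p^x\iota$ together with $\tau(\iota)$-invariance of $\textup{SOL}$ to get the $L^y_p$-equations), while the equivariance and injectivity are exactly the steps the paper delegates to \cite[\S6]{MS}. Your generator-by-generator verification of $\chi_+\circ C_{\mathrm{w}}=\chi_+$ for $\mathrm{w}\in\mathbb{W}_0$ (exploiting that $\chi_+$ is the trivial character $T_w\mapsto k(w)$ of $H_0$, that $\eta(T_i)$ is left multiplication by $T_i$, and that $k_i-b(z;k_i)=c(z;k_i)$) and your basis-based injectivity argument via Proposition \ref{basispropB}(iv) faithfully reconstruct the details the paper leaves to that reference.
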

The theorem follows by restricting Cherednik's correspondence
mentioned in the introduction of this section (for $M$ the formal
principal series module) to $\textup{SOL}$. Indeed, if
$f\in\textup{SOL}$, then for fixed $\gamma\in T$, $f(t,\gamma)$
can be viewed as a solution of qKZ for the $H$-module $M_\gamma$,
and then by Cherednik's correspondence  $\chi_+(f)$ satisfies the
first system of equations of \eqref{BiSPB}. This holds for all
$\gamma\in T$. By \eqref{LdualityB} and the $\iota$-invariance of
\textup{SOL}, it then follows that
\[(L_p^yf)(t,\gamma)=(\iota L_p^x\iota f)(t,\gamma)=
(L_p^x\iota f)(\gamma^{-1},t^{-1})=p(t)(\iota f)(\gamma^{-1},t^{-1})=
p(t)f(t,\gamma),
\]
so also the second equation of \eqref{BiSPB} is satisfied.

For $\textup{GL}_N$, a detailed proof can be found in \cite[\S6]{MS}
and the arguments used there can also be applied in the present
setting.

\section{Harish-Chandra series solutions}

Application of $\chi_+$ to the basic asymptotic solution $\Phi$
leads to a meromorphic solution $\Phi_+$ of the bispectral
problem, which can be viewed as a bispectral analogue of the
difference Harish-Chandra solutions of the Macdonald difference
equations (\cite{LS}). For root systems of type $A$,
Harish-Chandra series solutions were studied before in \cite{EK1}
and \cite{KK}. In \cite[\S6.4]{MS}, the Harish-Chandra series
solution of type $A$ was reobtained from $\Phi_+(t,\gamma)$, by
specializing $\gamma\in T$, yielding new results on the
convergence and singularities of these solutions as a consequence
of corresponding results for $\Phi$. In the final subsection we
extend this to arbitrary root systems.

\subsection{Bispectral Harish-Chandra series}
As announced, we apply the map $\chi_+$ to the basic
asymptotically free solution $\Phi$ of BqKZ to obtain a special
meromorphic solution of the bispectral problem (see
\cite[\S6.3]{MS} for $\textup{GL}_N$).

\begin{defi}
We call $\Phi^+:=\chi_+(\Phi)\in\textup{BiSP}$ the basic Harish-Chandra
series solution of the bispectral problem.
\end{defi}

Put $\Psi^+:=\chi_+(\Psi)$. Then $\Phi^+=G\Psi^+$ and as a
consequence of Proposition \ref{PsiAnalyticB} and Corollary
\ref{PsiGammaExpansionB}, $\Psi^+$ is analytic on
$T\setminus\mathcal{S}_+^{-1}\times T\setminus\mathcal{S}_+$, and
for $(t,\gamma)\in B_{\epsilon}^{-1}\times
T\setminus\mathcal{S}_+$ we have
\[
\Psi^+(t,\gamma)=\sum_{\alpha\in Q^\vee_+}\Gamma^+_\alpha(\gamma)t^{-\alpha},
\]
where $\Gamma^+_\alpha:=\chi_+(\Gamma_\alpha)\in\mathcal{M}(T)$
for all $\alpha\in Q_+^\vee$. Recall that
$\Gamma_0(\gamma)=K(\gamma)T_{w_0}$ for some $K\in\mathcal{M}(T)$
(see \eqref{gamma0B}).

\begin{thm}\label{thmKgammaB}
We have
\begin{equation}\label{GammaPlusB}
\Gamma_0^+(\gamma)=k(w_0)K(\gamma),
\end{equation}
with $K\in\mathcal{M}(T)$ given by
\begin{equation}\label{KexplB}
K(\gamma)=\prod_{\alpha\in
R_+}\frac{(q_\alpha\gamma^{\alpha^\vee};q_\alpha)_\infty}
{(q_\alpha k_\alpha^2\gamma^{\alpha^\vee};q_\alpha)_\infty},
\end{equation}
where $q_\alpha=q^{2/\|\alpha\|^2}$ for $\alpha\in R_+$, as
before.
\end{thm}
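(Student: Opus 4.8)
The plan is to deduce \eqref{GammaPlusB} directly from \eqref{gamma0B} and then to obtain \eqref{KexplB} by deriving and solving a first-order $q$-difference equation for the leading coefficient $\Gamma_0^+$ of the bispectral Harish--Chandra series $\Psi^+$.

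Equation \eqref{GammaPlusB} is immediate. By \eqref{gamma0B} we have $\Gamma_0=K(\cdot)T_{w_0}$ with $K\in\mathcal{M}(T)$; since $\chi_+$ is $\mathbb{K}$-linear and $\chi_+(T_{w_0})=k(w_0)$ by the definition of $\chi_+$, applying $\chi_+$ gives $\Gamma_0^+=\chi_+(\Gamma_0)=k(w_0)K$. So the whole statement reduces to identifying $K$, equivalently the leading term $\Gamma_0^+$ of $\Psi^+$.

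To compute $\Gamma_0^+$ I would exploit that $\Phi^+=\chi_+(\Phi)=G\Psi^+$ belongs to $\textup{BiSP}$ (Theorem \ref{CherMatB}), so in particular it satisfies the second family of bispectral equations $(L^y_p\Phi^+)(t,\gamma)=p(t)\Phi^+(t,\gamma)$. Fixing $p=m_\lambda$ with $\lambda\in P^\vee_-$, I would write $L^y_{m_\lambda}$ out as an explicit $q$-difference operator in $\gamma$ using the duality \eqref{LdualityB} together with \eqref{LxFormulaB}, substitute $\Phi^+=G\Psi^+$, and use the functional equations \eqref{WeqB} for $G$ to cancel the overall factor $G(t,\gamma)$. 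This leaves an identity among the $H_0$-valued functions $\Psi^+(t,q^{\sigma}\gamma)$, with $\sigma$ ranging over $W_0\lambda$ and over $\Sigma^0(\lambda)$, and $\Psi^+(t,\gamma)$, the coefficients being explicit products of the functions $c_a$, powers of $\delta_{\underline k}$, and monomials $t^{w_0\sigma}$. Letting $|t^{\alpha_i^\vee}|\to\infty$ inside $B_\epsilon^{-1}$, where $\Psi^+(t,\cdot)\to\Gamma_0^+(\cdot)$ (Theorem \ref{asymTHMB}, Corollary \ref{PsiGammaExpansionB}), and comparing coefficients of the top power $t^{\lambda_+}$ on both sides, only the term indexed by the antidominant element of $W_0\lambda$ survives on the left and only the dominant monomial of $m_\lambda$ on the right — because for every $w\in W_0$ one has $\lambda_+-w_0 w(\lambda)\in Q^\vee_+$ and every coweight in $\Sigma^0(\lambda)$ has dominant representative strictly below $\lambda_+$. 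This yields a first-order $q$-difference equation
\[
\Gamma_0^+(q^{-\lambda}\gamma)=\delta_{\underline k}^{\lambda}\bigl(\textstyle\prod_{a\in S(\textup{t}(-\lambda))}c_a(\gamma)\bigr)^{-1}\Gamma_0^+(\gamma),\qquad\lambda\in P^\vee_-,
\]
where the correct placement of the parameters $q$ versus $q^{-1}$ and $\underline k$ versus $\underline k^{-1}$ in the $c_a$ (and hence the exact direction of the $\gamma$-shift) is the delicate point. Writing $a=\alpha+rc$ and using $c_{\alpha+rc}(\gamma)=k_\alpha^{-1}(1-k_\alpha^2q_\alpha^{r}\gamma^{\alpha^\vee})/(1-q_\alpha^{r}\gamma^{\alpha^\vee})$ together with $\prod_{\alpha\in R_+}k_\alpha^{-\langle-\lambda,\alpha\rangle}=\delta_{\underline k}^{\lambda}$ (a consequence of \eqref{klambdaB}), the right-hand coefficient simplifies to a product over $\alpha\in R_+$ of ratios of the form $(1-q_\alpha^{\ast}\gamma^{\alpha^\vee})/(1-k_\alpha^2q_\alpha^{\ast}\gamma^{\alpha^\vee})$.

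It then remains to solve this system. Using $(z;q)_\infty=(1-z)(qz;q)_\infty$, one verifies that $K(\gamma)=\prod_{\alpha\in R_+}(q_\alpha\gamma^{\alpha^\vee};q_\alpha)_\infty/(q_\alpha k_\alpha^2\gamma^{\alpha^\vee};q_\alpha)_\infty$ satisfies the above $q$-difference equations for all $\lambda$ in a generating subset of $P^\vee$; by the argument in the proof of \eqref{circlemmaB} it suffices to treat $\lambda=w_0(\varpi_j^\vee)$ for minuscule $\varpi_j^\vee$ (where $\Sigma^0(\lambda)=\emptyset$) and $\lambda=-\phi^\vee$ (where $\Sigma^0(\lambda)=\{0\}$), which generate $P^\vee$. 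Both $\Gamma_0^+/k(w_0)$ and $K$ are analytic near the cusp $\gamma^{\alpha_i^\vee}=0$ ($1\le i\le N$), with power-series expansions in $\{\gamma^{\beta}\}_{\beta\in Q^\vee_+}$ and constant term $1$ — the former by Corollary \ref{PsiGammaExpansionB} and $\chi_+(K_{0,0})=\chi_+(T_{w_0})=k(w_0)$, the latter because each factor of $K$ visibly is. Since the connection coefficients above have constant term $1$ at the cusp (again by \eqref{klambdaB}), the power-series coefficients of any such analytic solution are determined recursively by its value at the cusp, so $\Gamma_0^+=k(w_0)K$, which is \eqref{KexplB}. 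As a consistency check, $K$ has poles exactly where $\gamma^{\alpha^\vee}\in k_\alpha^{-2}q_\alpha^{-\mathbb{N}}$ for some $\alpha\in R_+$, matching the locus $\mathcal{S}_+$ off which $\Psi^+$ is analytic.

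The main obstacle is the leading-term computation of the third paragraph: pinning down the precise first-order equation, i.e.\ getting the parameter conventions and the $\gamma$-shift direction right, identifying exactly which summand of the Macdonald operator $L^y_{m_\lambda}$ produces the coefficient of $t^{\lambda_+}$, and checking that the lower-order terms indexed by $\Sigma^0(\lambda)$ do not interfere. This is precisely where, as noted in the introduction, the lack of a convenient affine-Dynkin-automorphism presentation makes the argument substantially different from the $\textup{GL}_N$ case of \cite{MS}.
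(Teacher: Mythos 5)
Your strategy is exactly the paper's: obtain \eqref{GammaPlusB} by applying $\chi_+$ to \eqref{gamma0B}; derive a first-order $q$-difference equation for $\Gamma_0^+$ by pushing the spectral problem $L^y_{m_\lambda}\Phi^+=m_\lambda\Phi^+$ through the gauge $\Phi^+=G\Psi^+$, multiplying by $t^{-w_0(\lambda)}$, and letting $|t^{-\alpha_j^\vee}|\to 0$; identify the surviving term ($w=e$ in $\sum_{w\in W_0^\lambda}$, and $t^{w_0(\lambda)}$ in $m_\lambda$); and then pin down $\Gamma_0^+$ by uniqueness of the power-series solution with constant term $1$ near the cusp $\gamma^{\alpha_i^\vee}=0$. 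Your auxiliary remark that it would suffice to treat $\lambda$ in a generating set of $P^\vee$ (minuscule $w_0(\varpi_j^\vee)$ and $-\phi^\vee$) is a reasonable variant but is not needed: the limit argument delivers the equation directly for every $\lambda\in P^\vee_-$.

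However, the explicit first-order equation you display is wrong, and this is precisely at the ``delicate point'' you flag. After the dust settles, the correct relation (with $\mu=-\lambda\in P_+^\vee$) is
\begin{equation*}
\frac{K(q^{\mu}\gamma)}{K(\gamma)}=\prod_{\alpha\in R_+}\prod_{r=1}^{\langle\mu,\alpha\rangle}\frac{1-k_\alpha^2q_\alpha^{r}\gamma^{\alpha^\vee}}{1-q_\alpha^{r}\gamma^{\alpha^\vee}},
\end{equation*}
whereas your displayed equation and your simplification (``ratios of the form $(1-q_\alpha^\ast\gamma^{\alpha^\vee})/(1-k_\alpha^2q_\alpha^\ast\gamma^{\alpha^\vee})$'') give the reciprocal, together with a shifted $r$-range. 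The source of the discrepancy is that you used the $c_{a;\underline k,q}$ convention, $c_{\alpha+rc}(\gamma)=k_\alpha^{-1}(1-k_\alpha^2q_\alpha^{r}\gamma^{\alpha^\vee})/(1-q_\alpha^{r}\gamma^{\alpha^\vee})$, where the operator $L^y_{m_\lambda}$ actually lives in $\C(T)\#_{q^{-1}}W$, so $\gamma^{(\alpha+rc)^\vee}=q_\alpha^{-r}\gamma^{\alpha^\vee}$ and the relevant $c$'s are $c_{a;\underline k,q^{-1}}$. Combined with the shift direction (the limit produces $\Gamma_0^+(q^{\lambda}\gamma)$ with $\lambda\in P^\vee_-$, not $q^{-\lambda}\gamma$), this exactly flips your coefficient. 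As written, the proposed verification that ``$K$ satisfies the above $q$-difference equations'' would fail — it is $K^{-1}$ that satisfies your version. With the convention and shift direction corrected the argument goes through and is identical to the paper's.
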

\begin{proof}
The definition of $\chi_+$ and the preceding remarks imply
\eqref{GammaPlusB}. Let $L(\gamma)$ denote the right-hand side of
\eqref{KexplB}. Then $L\in\mathcal{M}(T)$ is uniquely
characterized by the following properties.
\begin{itemize}
\item[\bf{(i)}] There exists an $\epsilon>0$ such that for $\gamma\in B_\epsilon$,
 $L$ admits a power series expansion
    \[ L(\gamma)=\sum_{\alpha\in Q^\vee_+}l_\alpha\gamma^\alpha,\]
    converging normally on compacta of $B_\epsilon$.
\item[\bf{(ii)}] $l_0=1$.
\item[\bf{(iii)}] $L(\gamma)$ satisfies the following system of $q$-difference
 equations:
    \[
    \left(\prod_{\alpha\in R_+}
    \prod_{r=1}^{\langle\lambda,\alpha\rangle}
    \frac{1-q_\alpha^r\gamma^{\alpha^\vee}}
    {1-q_\alpha^r k_\alpha^2\gamma^{\alpha^\vee}}\right)
    L(q^{\lambda}\gamma)=L(\gamma),\qquad \lambda\in P^\vee_+.
    \]
\end{itemize}
From Theorem \ref{asymTHMB} it follows that $K$ satisfies (i), and
since $K_{0,0}=T_{w_0}$, $K$ also satisfies (ii). It thus suffices to
show that $K$ solves the $q$-difference equations in (iii).

Recall that in order to show that
$\Gamma_0(\gamma)=K(\gamma)T_{w_0}$ for some $K\in\mathcal{M}(T)$,
we exploited the fact that $\Phi$ is a solution of the quantum KZ
equation in $t$ and investigated what taking the limit
$|t^{-\alpha^\vee_j}|\to0$ had to mean for $\Gamma_0(\gamma)$. We
are now going to exploit the fact that $\Phi^+$ satisfies the
spectral problem
\begin{equation}\label{specPhi+B}
(L_p^y\Phi^+)(t,\gamma)=p(t)\Phi^+(t,\gamma),\qquad p\in\C[T]^{W_0},
\end{equation}
and consider the limit $|t^{-\alpha^\vee_i}|\to0$ to obtain the
desired $q$-difference equations for $\Gamma^+_0$, and hence for
$K$.

Fix $\lambda\in P^\vee_-$. From formula \eqref{LxFormulaB} we
deduce
\[
\begin{split}
(L^y_{m_\lambda}\Phi^+)(t,\gamma)&=\sum_{w\in W_0^\lambda}
\prod_{a\in
S(\textup{t}(-\lambda))}c_{w(a),\underline{k},q^{-1}}
(\gamma)\Phi^+(t,q^{w(\lambda)}\gamma)+
\sum_{\mu\in\Sigma^0(\lambda)}g_\mu(\gamma^{-1})\Phi^+(t,q^\mu\gamma)
\end{split}
\]
with $g_\mu\in\mathcal{M}(T)$. Plugging
in $\Phi^+=G\Psi^+$, using \eqref{WeqB} and dividing both sides by
$G(t,\gamma)$, the equality $(L^y_{m_\lambda}\Phi^+)(t,\gamma)=
m_{\lambda}(t)\Phi^+(t,\gamma)$ gives
\[
\begin{split}
m_\lambda(t)\Psi^+(t,\gamma)=&\sum_{w\in W_0^\lambda}\prod_{a\in
S(\textup{t} (-\lambda))}c_{w(a),\underline{k}}(\gamma)
\delta_{\underline{k}}^{-w(\lambda)}t^{w_0w(\lambda)}
\Psi^+(t,q^{w(\lambda)}\gamma)\:+\\
&\sum_{\mu\in\Sigma^0(\lambda)}g_\mu(\gamma^{-1})
\delta_{\underline{k}}^{-\mu}
t^{w_0(\mu)}\Psi^+(t,q^\mu\gamma).
\end{split}
\]
Now we multiply both sides by $t^{-w_0(\lambda)}$ and consider the
limit $|t^{-\alpha^\vee_j}|\to0$. By \eqref{GammaPlusB} this will
result in a $q$-difference equation for $K$.  Note that:
\begin{itemize}
\item[(1)] $t^{-w_0(\lambda)}m_{\lambda}(t)=\sum_{\mu\in W_0\lambda}
t^{-w_0(\lambda)+\mu}\to 1$ since $w_0(\lambda)\in P^\vee_+$ and
$\nu-w(\nu)\in Q_+^\vee$ for all $\nu\in P_+^\vee$ and $w\in W_0$.
\item[(2)] $t^{-w_0(\lambda)}t^{w_0w(\lambda)}=t^{-w_0(\lambda)+w_0w(\lambda)}$
which is equal to 1 if $w(\lambda)=\lambda$ and tends to 0
otherwise. Considering $w\in W_0^\lambda$, we have
$w(\lambda)=\lambda$ only for $w=e$ .
\item[(3)] $t^{-w_0(\lambda)}t^{w_0(\mu)}\to 0$ for all
$\mu\in\Sigma^0(\lambda)$. Indeed, by \cite[(2.6.3)]{M} we have
\[\mu_+\in\Sigma(w_0(\lambda))\Leftrightarrow w_0(\lambda)-\mu_+\in Q_+^\vee\]
and hence also $w_0(\lambda)-w_0(\mu)\in Q_+^\vee$ for
$\mu\in\Sigma^0(\lambda)\subset\Sigma(w_0(\lambda))$. Moreover,
$w_0(\lambda)\neq w_0(\mu)$ since $\mu\notin W_0\lambda$.
\end{itemize}
Consequently, $K$ satisfies the following set of $q$-difference
equations:
\[
\left(\prod_{a\in
S(\textup{t}(-\lambda))}c_{a;\underline{k},q^{-1}}
(\gamma)\right)\delta_{\underline{k}}^{-\lambda}
K(q^\lambda\gamma)=K(\gamma),\qquad \lambda\in P^\vee_-.
\]
Equivalently, also setting $\mu:=-\lambda\in P_+^\vee$,
\begin{equation}\label{KcompB}
\left(\prod_{a\in
S(\textup{t}(\mu))}\frac{k_a^{-1}-k_a(q^{\mu}\gamma)^{a^\vee}}
{1-(q^{\mu}\gamma)^{a^\vee}}\right) \delta_{\underline{k}}^{\mu}
K(\gamma)=K(q^{\mu}\gamma),\qquad \mu\in P^\vee_+.
\end{equation}
Note that $L^y_{m_\lambda}\in\C(T)\#_{q^{-1}}W\simeq
\C(\{1\}\times T)\#(\{e\}\times W)$, so
$\gamma^{(\alpha+rc)^{\vee}}=q_\alpha^{-r}\gamma^{\alpha^\vee}$ for
$\alpha\in R$ and $r\in\Z$. Using
\[
\begin{split}
\prod_{a\in
S(\textup{t}(\mu))}\frac{k_a^{-1}-k_a(q^{\mu}\gamma)^{a^\vee}}
{1-(q^{\mu}\gamma)^{a^\vee}}&=\prod_{\alpha\in R_+}
\prod_{r=0}^{\langle\mu,\alpha\rangle-1}
\frac{k_\alpha^{-1}-k_\alpha q_\alpha^{\langle\mu,\alpha\rangle}
q_\alpha^{-r}\gamma^{\alpha^\vee}}
{1-q_\alpha^{\langle\mu,\alpha\rangle}q_\alpha^{-r}\gamma^{\alpha^\vee}}\\
&=\prod_{\alpha\in R_+}\prod_{r=1}^{\langle\mu,\alpha\rangle}
\frac{k_\alpha^{-1}-k_\alpha q_\alpha^{r}\gamma^{\alpha^\vee}}
{1-q_\alpha^{r}\gamma^{\alpha^\vee}}
\end{split}
\]
and $\delta_{\underline{k}}^{\mu}= \prod_{\alpha\in
R_+}k_\alpha^{\langle\mu,\alpha\rangle}$, we obtain from \eqref{KcompB}
\[
\left(\prod_{\alpha\in R_+}\prod_{r=1}^{\langle\mu,\alpha\rangle}
\frac{1-k^2_\alpha q_\alpha^{r}\gamma^{\alpha^\vee}}
{1-q_\alpha^{r}\gamma^{\alpha^\vee}}\right)K(\gamma)=
K(q^{\mu}\gamma),\qquad \mu\in P^\vee_+,
\]
and the proof is complete.
\end{proof}

In view of Remark \ref{remBiSPB}, we obtain solutions
$\Phi^+_w\in\textup{BiSP}$ ($w\in W_0$), given by
\[
\Phi^+_w(t,\gamma):=\Phi^+(t,w^{-1}\gamma).
\]
Setting $\Psi^+_w(t,\gamma):=\Psi^+(t,w^{-1}\gamma)$, we have
$\Phi^+_w(t,\gamma)=G(t,w^{-1}\gamma)\Psi_w^+(t,\gamma)$ and by
Corollory \ref{corPsiSpecB}(ii), for $\epsilon>0$ sufficiently
small, $\Psi_w^+$ has a power series expansion
\[
\Psi^+_w(t,\gamma)=\sum_{\alpha\in
Q_+^\vee}\Gamma_\alpha^+(w^{-1}\gamma)t^{-\alpha}
\]
for $(t,\gamma)\in B_{\epsilon}\times T\setminus
w(\mathcal{S}_+)$, converging normally on compacta of
$B_{\epsilon}\times T\setminus w(\mathcal{S}_+)$. The next result
follows along the same line as \cite[Prop. 6.20]{MS}.
\begin{prop}
The set $\{\Phi_w^+\}_{w\in W_0}\subset\textup{BiSP}$ is
$\mathbb{F}$-linearly independent.
\end{prop}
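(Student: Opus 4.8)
The plan is to mimic the argument of \cite[Prop.~6.20]{MS}. Suppose we have a dependence relation $\sum_{w\in W_0}g_w\Phi_w^+=0$ with $g_w\in\mathbb{F}$, not all zero. The strategy is to exploit the distinct asymptotic behaviour of the $\Phi_w^+$ as $|t^{-\alpha_i^\vee}|\to 0$, which is encoded in the leading exponent of $G(t,w^{-1}\gamma)$ in the $t$-variable. Recall from Proposition~\ref{GpropB}(ii) that $G(q^{-\lambda}t,\gamma)=\delta_{\underline{k}}^{-\lambda}\gamma^{-w_0(\lambda)}G(t,\gamma)$, so that the $t$-behaviour of $G(t,w^{-1}\gamma)$ is governed by the character $\gamma\mapsto(w^{-1}\gamma)^{-w_0(\lambda)}=\gamma^{-w(w_0(\lambda))}$; since $w_0(\lambda)$ ranges over $P^\vee_-$ as $\lambda$ ranges over $P^\vee_+$, and $W_0$ acts freely on generic characters via the identification of $T$ with $\operatorname{Hom}(P^\vee,\C^\times)$, the collection $\{w\mapsto\text{(leading }t\text{-character of }G(t,w^{-1}\gamma))\}_{w\in W_0}$ consists of pairwise distinct elements of $T$ for generic fixed $\gamma$.

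Concretely, I would proceed as follows. First, fix a generic $\gamma\in T$, outside all the singular loci $w(\mathcal{S}_+)$ ($w\in W_0$) and outside the poles of the coefficients $g_w$ and of the $\Gamma_\alpha^+$; by genericity we may also assume the $\#W_0$ characters $\{w_0w\}_{w\in W_0}$ act distinctly on $\gamma$. Second, specialize the putative relation at this $\gamma$ and at $t\mapsto q^{-\lambda}t$ for $\lambda\in P^\vee_+$, and divide through by $G(t,\gamma)$ (or by any one fixed $G(t,w_0^{-1}\gamma)$); using the quasi-periodicity of $G$, the relation $\sum_w g_w(t,\gamma)\Phi_w^+(q^{-\lambda}t,\gamma)=0$ becomes, after factoring out the common growth, a relation of the form $\sum_w \tilde g_w(\gamma)\,\gamma^{-w_0w(\lambda)}\,\Psi_w^+(q^{-\lambda}t,\gamma)=0$ holding for all $\lambda\in P^\vee_+$ (here I use $\mathbb{F}$-invariance of the $g_w$ to freeze their values). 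Third, let $|t^{-\alpha_i^\vee}|\to 0$: by Corollary~\ref{corPsiSpecB}(ii) each $\Psi_w^+(q^{-\lambda}t,\gamma)$ tends to $\Gamma_0^+(w^{-1}\gamma)$, which by Theorem~\ref{thmKgammaB} equals $k(w_0)K(w^{-1}\gamma)$ and is nonzero for generic $\gamma$. We are left with $\sum_w \tilde g_w(\gamma)K(w^{-1}\gamma)\,\gamma^{-w_0w(\lambda)}=0$ for all $\lambda\in P^\vee_+$. Fourth, viewing the left-hand side as a function of $\lambda$, linear independence of distinct characters of the monoid $P^\vee_+$ (equivalently, of the group $P^\vee$, by writing a general coweight as a difference of dominant ones) forces $\tilde g_w(\gamma)K(w^{-1}\gamma)=0$, hence $\tilde g_w(\gamma)=0$, for every $w\in W_0$ and for generic $\gamma$; since $\tilde g_w$ is, up to an invertible factor, just $g_w$, and $g_w\in\mathbb{F}$ is meromorphic, this gives $g_w\equiv 0$, a contradiction.

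The main obstacle I anticipate is bookkeeping the genericity conditions on $\gamma$ together with the limit interchange: one must be sure that the normal convergence in Corollary~\ref{corPsiSpecB}(ii) is uniform enough, as $t$ runs deep into $B_\epsilon^{-1}$, to pass to the limit term-by-term in the finite sum over $w$, and that the chosen $\gamma$ simultaneously avoids the (countably many) hyperplanes on which two of the characters $w_0w$ coincide or on which some $K(w^{-1}\gamma)$ vanishes; but all of these are closed nowhere-dense conditions, so a generic $\gamma$ exists. The remaining step—distinctness of the characters $\lambda\mapsto\gamma^{-w_0w(\lambda)}$ for generic $\gamma$—is immediate from the faithfulness of the $W_0$-action on $T$ together with the fact that $P^\vee$ separates points of $T$. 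Everything else is a routine adaptation of \cite[\S6.4]{MS}.
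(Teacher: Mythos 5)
Your overall plan (comparing the asymptotic plane-wave behaviour of the $\Phi_w^+$) can be made to work, but there is a much shorter route through machinery already available in the paper, and your steps~3--4 contain a genuine gap as written.

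\textbf{The shorter route.} By Theorem~\ref{CherMatB}, $\chi_+\colon\textup{SOL}\to\textup{BiSP}$ is injective, $\mathbb{W}_0$-equivariant and $\mathbb{F}$-linear. By Proposition~\ref{basispropB}(iv), $\{\tau(e,w)\Phi\}_{w\in W_0}$ is an $\mathbb{F}$-basis of $\textup{SOL}$, hence $\mathbb{F}$-linearly independent. By the $\mathbb{W}_0$-equivariance of $\chi_+$, one has $\chi_+\bigl(\tau(e,w)\Phi\bigr)=(e,w)\chi_+(\Phi)=(e,w)\Phi^+=\Phi_w^+$. Injectivity of an $\mathbb{F}$-linear map preserves linear independence, and you are done. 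This needs none of the asymptotics.

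\textbf{The gap in your argument.} Your step~3 claims that sending $|t^{-\alpha_i^\vee}|\to 0$ yields the clean character relation $\sum_w\tilde g_w(\gamma)K(w^{-1}\gamma)\gamma^{-ww_0(\lambda)}=0$ for all $\lambda\in P_+^\vee$, after which you apply linear independence of characters. This does not follow. First, the coefficients $g_w(t,\gamma)$ and $G(t,w^{-1}\gamma)$ appearing in your ``$\tilde g_w(\gamma)$'' depend on $t$ and do \emph{not} converge as $t$ runs to infinity in $B_\epsilon^{-1}$; elliptic functions and theta quotients oscillate. Second, and more importantly, $\Psi_w^+(q^{-\lambda}t,\gamma)$ is only \emph{asymptotically} equal to $\Gamma_0^+(w^{-1}\gamma)$: the relation you obtain for finite $\lambda$ carries a $\lambda$-dependent error term, so you cannot pass to the limit and still have a relation ``for all $\lambda\in P_+^\vee$.'' To make the asymptotic route rigorous one must instead freeze $(t_0,\gamma_0)$, restrict $\lambda$ to a ray $n\mu$ with $\mu\in P_+^\vee$ regular, obtain $\sum_w c_w z_w^n\Psi_w^+(q^{-n\mu}t_0,\gamma_0)=0$ for all $n\geq0$ with $c_w=g_w(t_0,\gamma_0)G(t_0,w^{-1}\gamma_0)$ and $z_w=\gamma_0^{-ww_0(\mu)}$, and then run a Vandermonde-with-perturbation argument: choose $\gamma_0$ so that the moduli $|z_w|$ ($w\in W_0$) are pairwise distinct (possible generically since the coweights $ww_0(\mu)$ are pairwise distinct), divide by the largest $z_w^n$, send $n\to\infty$ to conclude that the corresponding $c_w$ vanishes, and iterate. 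Finally, a persistent typo: the exponent picked up from $G(q^{-\lambda}t,w^{-1}\gamma)$ is $\gamma^{-ww_0(\lambda)}$, as in your first paragraph, not $\gamma^{-w_0w(\lambda)}$ as written in your second and third.
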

We expect that the set $\{\Phi_w^+\}_{w\in W_0}$ is in fact a
basis of $\textup{BiSP}$ over $\mathbb{F}$. This would follow, for
example, if we could prove that $\chi_+$ is an $\mathbb{F}$-linear
isomorphism $\textup{SOL}\to\textup{BiSP}$ (rather than only an
embedding). Both are still open problems.

\subsection{Application to Harish-Chandra series solutions of
Macdonald's difference equations} Let $\zeta\in T$. The spectral
problem of the Macdonald $q$-difference operators with spectral
parameter $\zeta$ is \begin{equation}\label{MacSpecB}
L_p^xf=p(\zeta^{-1})f,\qquad \forall p\in\C[T]^{W_0},
\end{equation}
for meromorphic functions $f$ on $T$. Let
$\textup{SP}_\zeta\subset \mathcal{M}(T)$ denote the set of
solutions of \eqref{MacSpecB}. It is a vector space over
$\mathcal{E}(T)$, invariant under the usual action of $W_0$ on
$M(T)$.

Recall the solution space $\textup{SOL}_\zeta\subset
H_0^{\mathcal{M}(T)}$ of the quantum KZ equation \eqref{qKZconcrB}
associated to $M_\zeta$, also $W_0$-invariant, but with respect to
the $\tau_x^{M_\zeta}(W_0)$-action on $H_0^{\mathcal{M}(T)}$. We
have the following special case of Cherednik's correspondence from
\cite{Cref,CInd} (see \cite[Prop. 6.22]{MS}).

\begin{prop}
For each $\zeta\in T$, $\chi_+$ defines an $W_0$-equivariant
$\mathcal{E}(T)$-linear map
$\chi_+\colon\textup{SOL}_\zeta\to\textup{SP}_\zeta$.
\end{prop}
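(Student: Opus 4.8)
The plan is to treat the three claims separately; the only substantial point is that $\chi_+$ sends $\textup{SOL}_\zeta$ into $\textup{SP}_\zeta$, and this is a specialization of Cherednik's correspondence \cite{Cref,CInd}, carried out for general root systems exactly as in the $\textup{GL}_N$ case \cite[\S6]{MS}. First, $\mathcal{E}(T)$-linearity is immediate, since $\chi_+\colon H_0^{\mathcal{M}(T)}=\mathcal{M}(T)\otimes H_0\to\mathcal{M}(T)$ is $\mathcal{M}(T)$-linear, hence a fortiori $\mathcal{E}(T)$-linear for the multiplication actions on source and target. For the $W_0$-equivariance --- $W_0$ acting on the source through $\tau_x^{M_\zeta}$ and on the target through the usual action on $\mathcal{M}(T)$ --- I would argue directly: from $\chi_+(T_w)=k(w)$ and the multiplication rule for $T_jT_w$ ($1\leq j\leq N$, $w\in W_0$) one checks that $\chi_+(T_jh)=k_j\chi_+(h)$ for all $h\in H_0$; combining this with $\tau_x^{M_\zeta}(s_j)=F_{s_j}(X)(s_j\otimes1)$, $F_{s_j}(X)=(c_j(X)^{-1}\otimes1)(1\otimes T_j-b_j(X)\otimes1)$, and the identity $k_j-b_j=c_j$, one finds for $g\otimes h\in\mathcal{M}(T)\otimes H_0$ that $\chi_+\bigl(\tau_x^{M_\zeta}(s_j)(g\otimes h)\bigr)=c_j(X)^{-1}\bigl(k_j-b_j(X)\bigr)(s_jg)\,\chi_+(h)=(s_jg)\,\chi_+(h)=s_j\bigl(\chi_+(g\otimes h)\bigr)$. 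Hence $\chi_+\circ\tau_x^{M_\zeta}(s_j)=s_j\circ\chi_+$, so $\chi_+\circ\tau_x^{M_\zeta}(w)=w\circ\chi_+$ for all $w\in W_0$, which is the asserted equivariance; since $\textup{SP}_\zeta$ is $W_0$-stable, this is compatible with the target once the main claim is known.

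For the main claim, I would show that for $f\in\textup{SOL}_\zeta$ and $p\in\C[T]^{W_0}$ one has $L_p^x\chi_+(f)=p(\zeta^{-1})\chi_+(f)$. This is exactly Cherednik's correspondence for the single principal series module $M_\zeta$: it is the same computation that, applied fibrewise in $\gamma$, underlies the proof of Theorem \ref{CherMatB}, the only difference being that here the qKZ solution $f$ is not assumed to extend to a BqKZ solution, which is irrelevant to the argument. Concretely, one writes $L_p^x=\textup{Res}\bigl(\rho^{x}_{\underline{k}^{-1},q}(p(Y))\bigr)$; the qKZ equations $\tau_x^{M_\zeta}(\textup{t}(\lambda))f=f$ ($\lambda\in P^\vee$) say that $f$ is invariant under the $\tau_x^{M_\zeta}$-action of the translation subgroup, and realizing this action through the Bernstein--Zelevinsky presentation $H\simeq H_0\otimes\C_Y[T]$ (Theorem \ref{HcharB}) together with the explicit formula for $\tau_x$ and the reflection-absorbing identity $\chi_+(T_j\cdot)=k_j\chi_+(\cdot)$ turns the action of $\tau_x^{M_\zeta}(Y^\lambda)$ on $f$ into that of the $q$-difference reflection operator $\rho^{x}_{\underline{k}^{-1},q}(Y^\lambda)$ on $\chi_+(f)$. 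Since $p(Y)$ is central it acts on $M_\zeta$ by a scalar; tracking the parameter inversion $\underline{k}\mapsto\underline{k}^{-1}$ and the Cherednik duality $(Y^\lambda)^*=X^{-\lambda}$ identifies this scalar as $p(\zeta^{-1})$, and symmetrizing over $p\in\C[T]^{W_0}$ and applying $\textup{Res}$ then produces the Macdonald eigenvalue equation. All the root-system-specific ingredients --- the formula \eqref{LxFormulaB}, Lemma \ref{lemrhodualityB}, and \eqref{circlemmaB} --- are already available, so the $\textup{GL}_N$ argument of \cite[Prop.~6.22]{MS} (and \cite[\S6.2]{MS}) transfers verbatim.

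The step I expect to cost the most care is this bookkeeping: matching the parameter labels between the qKZ side (built from $\rho_{\underline{k},q}$, so that the ``trivial'' character $T_j\mapsto k_j$ of $H_0$ is the correct contraction) and the Macdonald side (built from $\rho_{\underline{k}^{-1},q}$), and checking that the spectral parameter is $\zeta^{-1}$ rather than $\zeta$ --- a point that comes from combining the central character $\zeta$ of $M_\zeta$ with the anti-involution $(Y^\lambda)^*=X^{-\lambda}$, and that must be consistent with the first equation of \eqref{BiSPB}. The fact that $L_p^x$ is only $\textup{Res}$ of the reflection operator $\rho^{x}_{\underline{k}^{-1},q}(p(Y))$, and not that operator itself, causes no trouble: $\chi_+(f)$ need not be $W_0$-invariant, but it arises in a way compatible with the $\textup{Res}$-reduction, just as for $\textup{GL}_N$. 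Beyond this bookkeeping there is no genuinely new difficulty.
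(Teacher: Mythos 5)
The paper gives no proof of this proposition, simply citing Cherednik's correspondence \cite{Cref,CInd} and \cite[Prop.~6.22]{MS}; your argument correctly supplies the omitted details by the same route. Your explicit verification that $\chi_+(T_jh)=k_j\chi_+(h)$ and hence $\chi_+\circ\tau_x^{M_\zeta}(s_j)=s_j\circ\chi_+$, and your reduction of the main inclusion to Cherednik's correspondence with the parameter bookkeeping ($\underline{k}\mapsto\underline{k}^{-1}$, spectral value $\zeta^{-1}$), are all sound.
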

\begin{rema}
In an upcoming paper by Stokman it is shown that $\chi_+$ is an
isomorphism if $\zeta^{\alpha^\vee}\neq k^2_\alpha,1$ for all
$\alpha\in R$ (see \cite{St}).
\end{rema}

Recall that $\Psi^+=\chi_+(\Psi)$ with $\Psi$, as usual, the
solution of the gauged bispectral BqKZ equations
\eqref{gaugedeqnB} obtained in Theorem \eqref{asymTHMB}. It
follows from Corollary \ref{corPsiSpecB} that $\Psi^+(t,\gamma)$
may be specialized at $\gamma=\zeta$ for $\zeta\in T\setminus
\mathcal{S}_+^{-1}$, yielding a meromorphic function
$\Psi^+(\cdot,\zeta)\in\mathcal{M}(T)$ with poles at
$t\in\mathcal{S}_+^{-1}$. Define $\widetilde{G}\in\mathbb{K}$ by
\[
\widetilde{G}(t,\gamma):=\frac{\theta_q(tw_0(\gamma)^{-1})}
{\theta_q(\delta_{\underline{k}}t)}.
\]
\begin{rema}
Note that
$\widetilde{G}(t,\gamma)=\theta_q(\delta_{\underline{k}}^{-1}w_0(\gamma)^{-1})
G(t,\gamma)$ and that $\widetilde{G}(t,\gamma)$ can be specialized
in $\gamma=\zeta$. Lacking the factor
$\theta_q(\delta_{\underline{k}}^{-1}w_0(\gamma)^{-1})$ in the
denominator, $\widetilde{G}$ does not satisfy
$\iota(\widetilde{G})=\widetilde{G}$. Therefore,
$\widetilde{G}\Psi\notin \textup{SOL}$, but we do have
$\widetilde{G}(\cdot,\zeta)\Psi(\cdot,\zeta)\in\textup{SOL}_\zeta$.
\end{rema}
It follows that
$\widetilde{G}(\cdot,\zeta)\Psi^+(\cdot,\zeta)\in\textup{SP}_\zeta$
and hence $\Psi^+(\cdot,\zeta)$ is a solution of the spectral
problem for the gauged Macdonald $q$-difference operators with
spectral parameter $\zeta$, that is, a solution of
\begin{equation}\label{gaugedMacB}
(\widetilde{L}^x_pf)(t)=p(\zeta^{-1})f(t),\qquad \forall p\in\C[T]^{W_0},
\end{equation}
with
\[
\widetilde{L}^x_p:=\widetilde{G}(\cdot,\zeta)^{-1}\:L^x_p\:
\widetilde{G}(\cdot,\zeta).
\]
At the end of the previous subsection we introduced
$\Psi^+_w(t,\gamma)=\Psi^+(t,w^{-1}\gamma)$ for $w\in W_0$. Put
$\mathcal{S}:=\bigcup_{w\in W_0}w(\mathcal{S}_+)$. The
considerations of this section imply the following.
\begin{thm}
Fix $\zeta\in T\setminus{S}$.\\
{\bf (i)} For $\epsilon>0$ sufficiently
small, $\Psi_w^+(\cdot,\zeta)$ has a power series expansion
\[
\Psi^+_w(t,\zeta)=\sum_{\alpha\in
Q_+^\vee}\Gamma_\alpha^+(w^{-1}\zeta)t^{-\alpha}
\]
for $t\in B_{\epsilon}$, converging normally on compacta of
$B_{\epsilon}$ and with $\Gamma_0^+(w^{-1}\zeta)\neq0$ explicitly
given by \eqref{GammaPlusB}.\\
{\bf(ii)} $\Psi_w^+(t,\zeta)$ ($w\in W_0$) is analytic in
$t\in T\setminus\mathcal{S}_+^{-1}$.\\
{\bf (iii)} The function $\widetilde{\Psi}^+_w(\cdot,\zeta)\in\mathcal{M}(T)$
($w\in W_0$) defined by
\[
\widetilde{\Psi}^+_w(t,\zeta):=
\frac{\widetilde{G}(t,w^{-1}(\zeta))}{\widetilde{G}(t,\zeta)}
\Psi_w^+(t,\zeta)
=\frac{\theta_q(t(w_0w^{-1})(\zeta)^{-1})}
{\theta_q(tw_0(\zeta)^{-1})}\Psi_w^+(t,\zeta),
\]
is a nonzero solution of the spectral problem \eqref{gaugedMacB}
for the gauged Macdonald $q$-difference operators for all $w\in
W_0$.
\end{thm}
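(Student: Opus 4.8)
The plan is to obtain all three parts by specializing, at $\gamma=w^{-1}\zeta$, facts already established for $\Psi$, for $\Psi^+$ and for the correspondence $\chi_+$; the only genuine content is a parameter-bookkeeping in part \textbf{(iii)}. Throughout I use that $\mathcal{S}=\bigcup_{v\in W_0}v(\mathcal{S}_+)$ is $W_0$-invariant and contains $\mathcal{S}_+$, so that $\zeta\notin\mathcal{S}$ forces $w^{-1}\zeta\in T\setminus\mathcal{S}_+$ and $w^{-1}\zeta\notin\mathcal{S}$ for every $w\in W_0$; this is what makes all the specializations below legitimate.

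For \textbf{(i)} I would apply Corollary \ref{corPsiSpecB}(ii) with $\zeta$ replaced by $w^{-1}\zeta\in T\setminus\mathcal{S}_+$, giving the normally convergent expansion $\Psi(t,w^{-1}\zeta)=\sum_{\alpha\in Q_+^\vee}\Gamma_\alpha(w^{-1}\zeta)t^{-\alpha}$. Since $\chi_+$ is a fixed $\C$-linear map on $H_0$ it commutes with normally convergent sums and with specialization of the scalar coefficients, so $\Psi^+_w(t,\zeta)=\Psi^+(t,w^{-1}\zeta)=\sum_{\alpha}\Gamma^+_\alpha(w^{-1}\zeta)t^{-\alpha}$ with $\Gamma^+_\alpha=\chi_+(\Gamma_\alpha)$. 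By \eqref{GammaPlusB} the leading coefficient equals $\Gamma^+_0(w^{-1}\zeta)=k(w_0)K(w^{-1}\zeta)$, and since $k(w_0)=\prod_{\alpha\in R_+}k_\alpha\neq0$ while $K$ is given by the explicit product \eqref{KexplB} (whose only poles lie on $\mathcal{S}_+$), this scalar is finite and nonzero. Part \textbf{(ii)} uses the same mechanism one step down: Corollary \ref{corPsiSpecB}(i) at $\gamma=w^{-1}\zeta$ says $\Psi(\cdot,w^{-1}\zeta)$ is a meromorphic $H_0$-valued function on $T$, regular on $T\setminus\mathcal{S}_+^{-1}$, and the $\mathcal{M}(T)$-linear extension of $\chi_+$ carries regular functions to regular functions, so $\Psi^+_w(\cdot,\zeta)=\chi_+(\Psi(\cdot,w^{-1}\zeta))$ is analytic on $T\setminus\mathcal{S}_+^{-1}$.

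For \textbf{(iii)} the argument is a gauge-conjugation of the one-torus correspondence. Because $\chi_+$ maps $\textup{SOL}$ into $\textup{BiSP}$ (Theorem \ref{CherMatB}), $\Phi^+=G\Psi^+\in\textup{BiSP}$ solves in particular the $L^x_p$-system of \eqref{BiSPB}; but $\gamma$ cannot be set equal to $\zeta$ in $G$, which is why one passes to $\widetilde{G}$. Replacing $\zeta$ by $w^{-1}\zeta$ in the relation $\widetilde{G}(\cdot,\zeta)\Psi(\cdot,\zeta)\in\textup{SOL}_\zeta$ yields $\widetilde{G}(\cdot,w^{-1}\zeta)\Psi(\cdot,w^{-1}\zeta)\in\textup{SOL}_{w^{-1}\zeta}$; applying the correspondence $\chi_+\colon\textup{SOL}_{w^{-1}\zeta}\to\textup{SP}_{w^{-1}\zeta}$ and then using the $\mathcal{M}(T)$-linearity of $\chi_+$ to pull the scalar factor out shows
\[
L^x_p\bigl(\widetilde{G}(\cdot,w^{-1}\zeta)\Psi^+(\cdot,w^{-1}\zeta)\bigr)=p\bigl((w^{-1}\zeta)^{-1}\bigr)\,\widetilde{G}(\cdot,w^{-1}\zeta)\Psi^+(\cdot,w^{-1}\zeta)
\]
for all $p\in\C[T]^{W_0}$. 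Since $\widetilde{\Psi}^+_w(\cdot,\zeta)=\widetilde{G}(\cdot,\zeta)^{-1}\widetilde{G}(\cdot,w^{-1}\zeta)\Psi^+(\cdot,w^{-1}\zeta)$ and $\widetilde{L}^x_p=\widetilde{G}(\cdot,\zeta)^{-1}L^x_p\widetilde{G}(\cdot,\zeta)$, conjugating the displayed identity by $\widetilde{G}(\cdot,\zeta)$ gives $\widetilde{L}^x_p\widetilde{\Psi}^+_w(\cdot,\zeta)=p((w^{-1}\zeta)^{-1})\widetilde{\Psi}^+_w(\cdot,\zeta)$. The decisive step is that $p\in\C[T]^{W_0}$ is $W_0$-invariant, whence $p((w^{-1}\zeta)^{-1})=p(w(\zeta^{-1}))=p(\zeta^{-1})$, so $\widetilde{\Psi}^+_w(\cdot,\zeta)$ satisfies \eqref{gaugedMacB} with exactly the required eigenvalue. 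Finally $\widetilde{\Psi}^+_w(\cdot,\zeta)$ is not identically zero: by \textbf{(i)} its factor $\Psi^+_w(\cdot,\zeta)=\Psi^+(\cdot,w^{-1}\zeta)$ has nonzero leading coefficient $\Gamma^+_0(w^{-1}\zeta)$, hence does not vanish identically, and the remaining factor is the nonzero meromorphic theta quotient $\theta_q(t(w_0w^{-1})(\zeta)^{-1})/\theta_q(tw_0(\zeta)^{-1})$.

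The only delicate points are bookkeeping: that every specialization $\gamma\mapsto w^{-1}\zeta$ avoids the singular locus, handled uniformly by the $W_0$-invariance of $\mathcal{S}$, and the observation that the spectral-parameter shift $\zeta\leftrightarrow w^{-1}\zeta$ introduced by the $\widetilde{G}$-gauge is absorbed by the $W_0$-symmetry of the $p$. No analytic estimates are needed beyond those of Theorem \ref{asymTHMB} and Proposition \ref{PsiAnalyticB}, together with the explicit leading-term computation of Theorem \ref{thmKgammaB}.
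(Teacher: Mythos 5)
Your proof is essentially the one the paper intends: the paper itself states this theorem with only the phrase ``the considerations of this section imply the following,'' and you reconstruct those considerations correctly --- specialize Corollary \ref{corPsiSpecB} and Theorem \ref{thmKgammaB} at $\gamma=w^{-1}\zeta$, apply the $\mathcal{M}(T)$-linear functional $\chi_+$, and for (iii) push $\widetilde{G}(\cdot,w^{-1}\zeta)\Psi(\cdot,w^{-1}\zeta)\in\textup{SOL}_{w^{-1}\zeta}$ through $\chi_+$ into $\textup{SP}_{w^{-1}\zeta}$ and conjugate by $\widetilde{G}(\cdot,\zeta)$. The two load-bearing observations are exactly the ones you flag: $W_0$-invariance of $\mathcal{S}$ to authorize all the specializations, and $W_0$-invariance of $p\in\C[T]^{W_0}$ to convert the eigenvalue $p((w^{-1}\zeta)^{-1})$ into $p(\zeta^{-1})$.

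Two small points. First, a cosmetic slip: $(w^{-1}\zeta)^{-1}=w^{-1}(\zeta^{-1})$, not $w(\zeta^{-1})$; this is harmless since you invoke $W_0$-invariance immediately afterwards. Second, and more substantively, the justification of $\Gamma_0^+(w^{-1}\zeta)\neq0$ is incomplete. You argue that $K$ has no pole at $w^{-1}\zeta$ because $w^{-1}\zeta\notin\mathcal{S}_+$, but finiteness is not nonvanishing. From \eqref{KexplB}, $K(\gamma)=0$ exactly when $\gamma^{\alpha^\vee}\in q_\alpha^{-\mathbb{N}}$ for some $\alpha\in R_+$, and for generic multiplicity parameters this zero locus is disjoint from the pole locus $\mathcal{S}_+$ and hence from $\mathcal{S}=\bigcup_{v\in W_0}v(\mathcal{S}_+)$. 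So the hypothesis $\zeta\notin\mathcal{S}$ does not by itself force $\Gamma_0^+(w^{-1}\zeta)\neq0$; an additional restriction on $\zeta$ (or a separate argument) is needed. To be fair, this imprecision appears to be inherited from the statement of the theorem itself, which the paper leaves unproved, so your reconstruction faithfully reflects what the paper does.
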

The functions $\widetilde{\Psi}^+_w(\cdot,\zeta)$ ($w\in W_0$) are
the Harish-Chandra series solutions of the spectral problem
\eqref{gaugedMacB}. As already mentioned in the introduction of
this section, formal Harish-Chandra series solutions of
Macdonald's spectral problem were already obtained in \cite{LS},
and earlier for the root system of type $A$ in \cite{EK1} and
\cite{KK}. The upshot here is that we obtain the Harish-Chandra
series solutions as meromorphic functions and are able to
explicitly determine the leading term and the pole locations of $\Psi_w^+(\cdot,\zeta)$.


\end{document}